\newtheorem{theorem}{Theorem}[section]
\newtheorem{lemma}[theorem]{Lemma}
\newtheorem{remark}[theorem]{Remark}
\newtheorem{proposition}[theorem]{Proposition}
\newtheorem{assumption}[theorem]{Assumption}
\numberwithin{equation}{section}
\makeatletter\setlength{\textwidth}{360pt}\setlength{\textheight}{615pt}
\begin{document}
\author{Yasheng Lyu}

\title{\textbf{On the Dirichlet problem for the degenerate $k$-Hessian equation} }

\address{School of Mathematics and Statistics, Xi'an Jiaotong University, Xi'an, Shaanxi 710049, People's Republic of China}

\email{lvysh21@stu.xjtu.edu.cn}
%**********************************************************************************

\begin{abstract}
This paper investigates the existence of a global $C^{1,1}$ solution to the Dirichlet problem for the $k$-Hessian equation with a nonnegative right-hand side $f$, focusing on the required conditions for $f$. 
The conditions $f^{1/(k-1)}\in C^{1,1}(\overline{\Omega_{0}})$ and $f^{3/(2k-2)}\in C^{2,1}(\overline{\Omega_{0}})$, together with $f\geq0$ in a domain $\Omega_{0}\Supset\Omega$, are optimal, as demonstrated by classical counterexamples. 
For the Monge-Amp{\`e}re equation ($k=n$), we establish the existence under the optimal condition $f^{3/(2n-2)}\in C^{2,1}(\overline{\Omega_{0}})$ together with $f\geq0$ in $\Omega_{0}$.
For the general $k$-Hessian equation, we establish the existence under the condition $f\geq0$ in $\Omega_{0}$ together with one of the following three conditions:
\begin{align*}
&(\romannumeral1)\quad f^{1/(k-1)}\in C^{1,1}(\overline{\Omega_{0}}),\ \ \inf_{\Omega}\Delta u\geq1,\ \ 2\leq k\leq n-1;\\
&(\romannumeral2)\quad f^{3/(2k-2)}\in C^{2,1}(\overline{\Omega_{0}}),\ \ \inf_{\Omega}\Delta u\geq1,\ \ 5\leq k\leq n-1;\\
&(\romannumeral3)\quad f^{3/(2k)}\in C^{2,1}(\overline{\Omega_{0}}),\ \ 2\leq k\leq n-1.
\end{align*}
\end{abstract}

\keywords{Monge-Amp{\`e}re equation, $k$-Hessian equation, Dirichlet problem, Existence theory, Optimal degeneracy.}

\subjclass[2020]{35J60, 35J70, 35J25, 35J96.}
 \date{}
\maketitle

\pagestyle{myheadings}
\markboth{$~$ \hfill{\uppercase{Yasheng Lyu}}\hfill $~$}{$~$ \hfill{\uppercase{On the degenerate $k$-Hessian equation}}\hfill $~$}

\section{Introduction}

In this paper, we consider the Dirichlet problem for the degenerate $k$-Hessian equation 
\begin{equation}\label{eqn1.1}
\begin{cases}
\sigma_{k}\big[D^{2}u\big]=f\geq0\quad \text{in}\ \Omega,\\
u=\varphi\quad \text{on}\ \partial\Omega,
\end{cases}
\end{equation}
where $\Omega$ is a bounded domain in $\mathbb{R}^{n}$, $2\leq k\leq n$, and $D^{2}u$ denotes the Hessian matrix of the function $u$.  
For an $n\times n$ real symmetric matrix $A$, we denote by $\sigma_{k}[A]$ the  $k$-th elementary symmetric polynomial of the eigenvalues $\lambda(A)=(\lambda_{1},\dots,\lambda_{n})$; that is, 
\[
\sigma_{k}[A]=\sigma_{k}(\lambda(A)):=\sum_{1\leq i_{1}<\cdots<i_{k}\leq n}\lambda_{i_{1}}\cdots\lambda_{i_{k}} 
\]
for $k=1,2,\dots,n$.
Equivalently, $\sigma_{k}[A]$ is the sum of all $k\times k$ principal minors of $A$.

Following the work of Caffarelli-Nirenberg-Spruck \cite{CNS1985}, a function $u\in C^{2}(\Omega)\cap C^{0}(\overline{\Omega})$ is called $k$-admissible if its Hessian eigenvalues satisfy  
\[
\lambda\big(D^{2}u(x)\big)\in\Gamma_{k},\quad \forall x\in\Omega,
\]
where the G{\aa}rding cone is defined by 
\[
\Gamma_{k}:=\left\{\lambda\in\mathbb{R}^{n}:\ \sigma_{j}(\lambda)>0\ \text{for all}\ 1\leq j\leq k\right\}.
\]
They proved that  
\[
\frac{\partial\sigma_{k}}{\partial\lambda_{i}}>0\quad \text{in}\ \Gamma_{k},\quad\forall1\leq i\leq n;
\]
that $\sigma_{k}^{1/k}(\lambda)$ is concave in $\Gamma_{k}$; 
and that the operator $\sigma_{k}^{1/k}[A]$ is elliptic and concave for any real symmetric matrix $A$ with $\lambda(A)\in\Gamma_{k}$.  
A hypersurface in $\mathbb{R}^{n}$ is called $k$-convex (for $k\in\{1,2,\dots,n-1\}$) if its principal curvatures $\kappa=(\kappa_{1},\kappa_{2},\dots,\kappa_{n-1})$ satisfy $\sigma_{j}(\kappa)\geq0$ for all $j\leq k$ everywhere. 
It is strictly $k$-convex if there exists a constant $\delta>0$ such that $\sigma_{j}(\kappa)\geq\delta$ uniformly on the hypersurface for all $j\leq k$.
As shown in \cite{CNS1985}, a necessary condition for the existence of a $C^{2}(\overline{\Omega})$ solution to \eqref{eqn1.1} is that the boundary $\partial\Omega$ be strictly $(k-1)$-convex when regarded as a hypersurface in $\mathbb{R}^{n}$.

The $k$-Hessian equation is called degenerate if the nonnegative function $f$ is allowed to vanish at some points in $\overline{\Omega}$, and non-degenerate if $\inf_{\Omega}f>\delta$ for some positive constant $\delta$. 
This paper focuses on the degenerate case. 
The condition $f^{1/k}\in C^{1,1}(\overline{\Omega})$ is natural when exploiting the concavity of the operator.
However, the condition $f^{1/k}\in C^{1,1}$ may not be optimal.
Regarding the optimal condition, a counterexample for the Monge-Amp{\`e}re equation was given by Wang \cite{Wang1995}:
\begin{equation}\label{eqn7.6}
\begin{cases}
\det D^{2}u=\bar{f}(x):=\eta\left(\frac{x_{n}}{|x'|^{\alpha}}\right)\left|x'\right|^{\beta}\quad \text{in}\ B_{1}, \\
u=0\quad \text{on}\ \partial B_{1}, 
\end{cases}
\end{equation}
where $\alpha>1$, $\beta>0$, and 
\[
\eta(t):=
\begin{cases}
e^{-1/(1-t^{2})},\quad |t|<1,\\
0,\quad |t|\geq1. 
\end{cases}
\]
Wang \cite{Wang1995} proved that if $\beta<2(n-1)(\alpha-1)$, then \eqref{eqn7.6} admits no $C^{1,1}$ solution. 
Taking $\beta=2(n-1)(\alpha-1)-1$, one has $\bar{f}\geq0$ in $B_{2}$, and 
\[
\bar{f}^{\frac{1}{n-1-\frac{2n-1}{2\alpha}}}\in C^{1,1}(\overline{B_{2}}),\quad \bar{f}^{\frac{1}{n-1}}\in C^{1,1-\frac{2n-1}{\alpha(n-1)}}(\overline{B_{2}}),\quad \forall\alpha\geq4, 
\]
and 
\[
\bar{f}^{\frac{1}{\frac{2(n-1)}{3}-\frac{2n-1}{3\alpha}}}\in C^{2,1}(\overline{B_{2}}),\quad 
\bar{f}^{\frac{3}{2(n-1)}}\in C^{2,1-\frac{6n-3}{2\alpha(n-1)}}(\overline{B_{2}}),\quad \forall\alpha\geq6. 
\]
Ivochkina-Trudinger-Wang \cite{Wang2004} pointed out that a modification of the counterexample \eqref{eqn7.6} also applies to the $k$-Hessian
equation for $2\leq k\leq n-1$, although no proof was provided.
A detailed proof of this fact was given by Dinew-Pli{\'s}-Zhang \cite{Zhang2019};  namely, the problem 
\[
\begin{cases}
\sigma_{k}\big[D^{2}u\big]=\bar{f}\quad \text{in}\ B_{1}, \\
u=0\quad \text{on}\ \partial B_{1}, 
\end{cases}
\]
has no $C^{1,1}$ solution whenever $\beta<2(k-1)(\alpha-1)$, for all $2\leq k\leq n-1$.
Consequently, for any $k\in\{2,3,\dots,n\}$, 
\[
f^{\frac{1}{k-1}}\in C^{1,1}\quad \text{and}\quad f^{\frac{3}{2(k-1)}}\in C^{2,1} 
\]
are optimal. 

For the Monge-Amp{\`e}re equation, Guan \cite{Guan1997} reduced the estimate of second-order derivatives to the boundary under the optimal condition $f^{1/(n-1)}\in C^{1,1}(\overline{\Omega})$, and obtained the existence of a convex solution in $C^{1,1}(\overline{\Omega})$ under the homogeneous boundary condition ($\varphi=0$).  
Subsequently, by introducing a completely new method, Guan-Trudinger-Wang \cite{Guan1999} established a global $C^{2}$ estimate for the Monge-Amp{\`e}re equation under the optimal condition $f^{1/(n-1)}\in C^{1,1}(\overline{\Omega})$, which in turn yielded the existence of a convex solution in $C^{1,1}(\overline{\Omega})$.
For the case $2\leq k\leq n-1$ of equation \eqref{eqn1.1}, Ivochkina-Trudinger-Wang \cite{Wang2004} proposed the following open problem:    
\begin{equation}\label{eqn1.5}
\text{existence of a global $C^{1,1}$ solution under $f^{1/(k-1)}\in C^{1,1}$}.  
\end{equation} 
Dong \cite{Dong2006} solved problem \eqref{eqn1.5} under the homogeneous boundary condition.  
For general boundary conditions, Jiao-Wang \cite{Jiao2024} derived a global $C^{2}$ estimate for convex solutions to problem \eqref{eqn1.5}.  
However, since smooth approximations of convex viscosity solutions do not necessarily preserve convexity, they \cite{Jiao2024} were unable to establish the existence of a $k$-admissible solution in $C^{1,1}(\overline{\Omega})$ for equation \eqref{eqn1.1}. 
There is an extensive literature on degenerate $k$-Hessian equations; see, for instance, \cite{CNS1986,Hong1994,Krylov1987,Krylov1988,Trudinger1987,Trudinger1983,Trudinger1984,Guan1997a,Guan1998,Guan2021,Xu2014,Li1999,Wang2001}, although this list is far from exhaustive.

A common strategy is to derive global $C^{2}$ a priori estimate for the non-degenerate case that are independent of the positive lower bound of $f$. 
Then the existence of a $k$-admissible solution in $C^{1,1}(\overline{\Omega})$ for the degenerate case is established by an approximation argument using solutions to non-degenerate problems.  
To establish global $C^{2}$ a priori estimate independent of the positive lower bound of $f$, the primary challenge is to derive the second-order normal derivative estimate on the boundary.

For the Monge-Amp{\`e}re equation, using the convexity of the admissible solutions and of the domain, as well as the affine invariance of the equation,  Guan-Trudinger-Wang \cite{Guan1999} introduced a method to estimate the second-order normal derivatives on the boundary.
Recently, Jiao-Wang \cite{Jiao2024} extended this method to convex solutions of the $k$-Hessian equation in uniformly convex domains. 
Applying the method of Guan-Trudinger-Wang \cite{Guan1999}, we establish the existence of a global $C^{1,1}$ solution for the Monge-Amp{\`e}re equation under the optimal condition. 

\begin{theorem}\label{thm10.1}
Let $\partial\Omega\in C^{3,1}$ be uniformly convex,  
$\varphi\in C^{3,1}(\partial\Omega)$, 
$f^{3/(2n-2)}\in C^{2,1}(\overline{\Omega_{0}})$ and $f\geq0$ in $\Omega_{0}$, where $\Omega\Subset\Omega_{0}$.    
Then there exists a unique convex solution in $C^{1,1}(\overline{\Omega})$ for  \eqref{eqn1.1} with $k=n$.
\end{theorem}

For general $k$-Hessian equations, we employ the method of Ivochkina-Trudinger-Wang \cite{Wang2004}, which is completely different from that of \cite{Guan1999}, to establish the boundary second-order normal derivative estimate.
Ivochkina-Trudinger-Wang \cite{Wang2004} provided a PDE-based proof of Krylov’s results \cite{Krylov1994a,Krylov1994b,Krylov1995a,Krylov1995b}, and, roughly speaking, their ideas are similar. 
These results and ideas were clarified very well in \cite{Wang2004}, and we quote them below. 
\begin{quote}
A global upper bound, independent of the positive lower bound of $f$, for the second-order derivatives of admissible solutions to the Dirichlet problem of Hessian equations, and more general Bellman equations, was obtained by Krylov \cite{Krylov1990}, 
by a probabilistic argument, and by an analytic proof in a series of papers Krylov \cite{Krylov1994a,Krylov1994b,Krylov1995a,Krylov1995b}. 
Roughly speaking, Krylov’s proof  \cite{Krylov1994a,Krylov1994b,Krylov1995a,Krylov1995b} consists of two steps. 
One is the \textit{weakly interior estimate}, that is for any positive constants $\varepsilon,\ \delta>0$, there exists a constant $C_{\varepsilon,\delta}$, depending on $n$, $\varphi,\ \partial\Omega,\ \|u\|_{C^{1}(\overline{\Omega})},\ \|f\|_{C^{1,1}(\overline{\Omega})}$, and in particular the upper bound of the boundary second-order tangential and tangential-normal derivatives of $u$, such that
\begin{equation}\label{eqn1.2}
\sup_{\Omega_{\delta}}\left|D^{2}u\right|\leq\varepsilon\sup_{\partial\Omega}\left|D^{2}u\right|+C_{\varepsilon,\delta},
\end{equation}
where $\Omega_{\delta}:=\{x\in\Omega:\ \operatorname{dist}(x,\partial\Omega)>\delta\}$.
The other one is the \textit{boundary estimate in terms of the interior one}, that is for any $\delta>0$, there exists a constant $C_{\delta}$, depending in addition on $C_{\varepsilon_{0},\delta}$ in \eqref{eqn1.2}, with arbitrarily given  $\varepsilon_{0}>0$, such that
\begin{equation}\label{eqn1.3}
\sup_{\partial\Omega}\left|D^{2}u\right|\leq C_{\delta}\left(1+\sup_{\partial\Omega_{\delta}}\left|D^{2}u\right|\right).
\end{equation} 
\end{quote} 

Firstly, we establish the \textit{weakly interior estimate} for degenerate $k$-Hessian equations.

\begin{theorem}\label{thm1.1}
Let $u\in C^{3,1}(\overline{\Omega})$ be a $k$-admissible solution of equation \eqref{eqn1.1}, 
$\partial\Omega\in C^{3,1}$ be strictly $(k-1)$-convex, 
$\varphi\in C^{3,1}(\partial\Omega)$, $\inf_{\Omega_{0}}f>0$ where  $\Omega\Subset\Omega_{0}$.  
Let either $f^{1/(k-1)}\in C^{1,1}(\overline{\Omega_{0}})$ hold or  $f^{3/(2k-2)}\in C^{2,1}(\overline{\Omega_{0}})$ and $k\geq5$ hold. 
Then, for any constant $\varepsilon>0$, there exists a constant $C_{\varepsilon}>0$ such that
\begin{equation}\label{eqn1.7}
\left|D^{2}u(x)\right|\leq \frac{1}{d(x)}\left(\varepsilon\sup_{\partial\Omega}\left|D^{2}u\right|+C_{\varepsilon}\right),\quad \forall x\in\Omega, 
\end{equation}
where $d(x):=\operatorname{dist}(x,\partial\Omega)$, and the constant $C_{\varepsilon}$ depends on $\varepsilon$, $\Omega$, $\|\varphi\|_{C^{3}(\partial\Omega)}$,  $\operatorname{dist}(\Omega,\partial\Omega_{0})$, and either $\|f^{1/(k-1)}\|_{C^{1,1}(\overline{\Omega_{0}})}$ or  $\|f^{3/(2k-2)}\|_{C^{2,1}(\overline{\Omega_{0}})}$, but is independent of $\inf_{\Omega_{0}}f$. 
\end{theorem}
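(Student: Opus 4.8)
The plan is to adapt the classical barrier/auxiliary-function argument of Ivochkina--Trudinger--Wang \cite{Wang2004} (following Krylov) to the degenerate setting, tracking carefully where the degeneracy of $f$ enters and exploiting the extra room provided by the sharper exponents on $f$. Fix a point $x_{0}\in\Omega$ and set $d=d(x_{0})$. Near $x_{0}$ one works in a frame and considers, for a unit vector $\xi$, the quantity $w_{\xi}=u_{\xi\xi}$ (or its second difference quotient) cut off by a function that vanishes on $\partial\Omega$, e.g. $v=d(x)\,u_{\xi\xi}(x)+A|Du|^{2}+\cdots$, plus a large multiple of an auxiliary function that controls lower-order terms. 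The key structural facts to invoke are: the ellipticity $\partial\sigma_{k}/\partial\lambda_{i}>0$ and concavity of $\sigma_{k}^{1/k}$ on $\Gamma_{k}$ from \cite{CNS1985}; the global $C^{1}$ bound and the boundary estimates $\sup_{\partial\Omega}(|u_{\nu\eta}|+|u_{\eta\eta}|)\le C$ from Theorem~\ref{thm1.5} (which gives control of tangential and tangential-normal second derivatives on $\partial\Omega$, so that on $\partial\Omega$ only the pure normal second derivative is uncontrolled); and the a priori bound $\Delta u\le C\sup_{\partial\Omega}|D^{2}u|+C$ type inequality that one derives along the way from the maximum principle applied to $\Delta u$.

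The heart of the matter is the differentiated equation. Writing $F^{ij}=\partial\sigma_{k}/\partial u_{ij}$, one has $F^{ij}u_{ij\xi}=f_{\xi}$ and, differentiating once more,
\[
F^{ij}u_{ij\xi\xi}=f_{\xi\xi}-F^{ij,rs}u_{ij\xi}u_{rs\xi}\ \ge\ f_{\xi\xi}-F^{ij,rs}u_{ij\xi}u_{rs\xi},
\]
where the concavity of $\sigma_{k}^{1/k}$ makes the third-derivative term favorable \emph{modulo} a term of the form $-c\,f^{-1}(f_{\xi})^{2}$ (this is exactly where the Monge--Amp\`ere/Hessian ``$f^{1/k}$'' phenomenon appears). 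The degeneracy-tolerant bookkeeping is the following: under $f^{1/(k-1)}\in C^{1,1}$ one has $|f_{\xi}|^{2}\le C f^{(2k-4)/(k-1)}$ and $|f_{\xi\xi}|\le$ (integrable-against-$F^{ij}$ quantities) $+\,C f^{(k-3)/(k-1)}$, so that $f_{\xi\xi}-c f^{-1}|f_{\xi}|^{2}$ is bounded below by $-C\big(1+f^{(k-3)/(k-1)}\big)$, and the latter is absorbed once one also has the trace-type lower bound $F^{ii}\ge c\,\sigma_{k-1}(\lambda)\ge c\,f^{(k-1)/k}$ valid on $\Gamma_{k}$. Under the stronger hypothesis $f^{3/(2k-2)}\in C^{2,1}$ with $k\ge5$, one instead controls $f_{\xi}, f_{\xi\xi}$ and even $f_{\xi\xi\xi}$ by powers of $f$ with exponents $(2k-5)/(2k-2)$, $(2k-8)/(2k-2)$, $(2k-11)/(2k-2)$ respectively, and the constraint $k\ge5$ is precisely what keeps the relevant exponent $(2k-11)/(2k-2)$ (or the analogous one entering the cubic term in the third-order estimate) from becoming too negative to absorb. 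The a priori assumption \eqref{eqn1.6}, $\Delta u\ge\delta_{0}$ on $\Omega\cap B_{r_{0}}(y)$ for boundary points $y$ with $\Delta u(y)\gg1$, is used to guarantee a uniform positive lower bound on $F^{ii}$ (equivalently on the smallest eigenvalue-sum quantity) in the region where the auxiliary function attains its maximum, so that the bad terms can in fact be absorbed — without it the operator could degenerate exactly where one needs ellipticity.

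Concretely, I would: (i) set up the auxiliary function $\Phi=\beta d\,u_{\xi\xi}+$ (perturbation in $|Du|^{2}$ and a defining function of $\partial\Omega$) and show that if $\Phi$ attains an interior maximum at $x_{1}$, then at $x_{1}$, after using $F^{ij}\Phi_{ij}\le0$, the good third-derivative terms dominate all error terms except those controlled by the estimates above and by $\varepsilon\sup_{\partial\Omega}|D^{2}u|$ via the interpolation inherent in the cutoff $d(x)$; (ii) if the maximum of $\Phi$ is on $\partial\Omega$, invoke Theorem~\ref{thm1.5} directly, since there $d=0$ and the tangential/mixed second derivatives are bounded, giving the stated bound with the factor $1/d(x_{0})$ coming from $\Phi(x_{0})\le\max\Phi$; (iii) collect constants, verifying the claimed dependence. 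The main obstacle I anticipate is step (i): making the absorption quantitative, i.e. choosing the coupling constants $\beta$ and the size of the $|Du|^{2}$-perturbation so that the negative $f^{-1}|f_\xi|^{2}$ contribution and the (possibly negative-exponent) powers of $f$ are genuinely dominated by $F^{ij}w_{\xi\xi}w$-type terms plus $F^{ii}$; this is where the precise arithmetic of the exponents $3/(2k-2)$ versus $1/(k-1)$, the third-order Krylov-type estimate, and the $k\ge5$ threshold all have to line up, and where the a priori hypothesis \eqref{eqn1.6} is indispensable.
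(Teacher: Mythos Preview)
Your proposal has a genuine structural gap and one outright misreading of the hypotheses.

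\textbf{On condition \eqref{eqn1.6}.} You write that ``the a priori assumption \eqref{eqn1.6} \dots\ is used to guarantee a uniform positive lower bound on $F^{ii}$ \dots\ without it the operator could degenerate exactly where one needs ellipticity,'' and you call it ``indispensable.'' But \eqref{eqn1.6} is \emph{not} a hypothesis of Theorem~\ref{thm1.1}; it only enters in Theorem~\ref{thm1.3}. The weakly interior estimate must be proved with no lower bound on $\Delta u$ at all. Your absorption argument, as sketched, relies on such a lower bound and therefore does not go through.

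\textbf{On the auxiliary-function structure.} The paper does not use a crude cutoff $\Phi=\beta\,d(x)\,u_{\xi\xi}+\cdots$ with a direct maximum principle. Instead it works in the extended product space $\overline{\Omega}\times\mathbb{R}^{n}\times\mathbb{R}$ with variables $(x,\xi,\eta)$, takes the $2$-homogeneous functions
\[
w=u_{\xi\xi}+2\eta u_{\xi}+\eta^{2}u,\qquad
v=\tfrac{1}{4\alpha}(\psi+\beta)^{1-\alpha}|\xi|^{2}+\tfrac{\psi_{\xi}^{2}}{(\psi+\beta)^{\alpha}}+\beta^{4}(\psi+\beta)\eta^{2},
\]
and applies the Ivochkina--Trudinger--Wang comparison principle (Lemma~\ref{thm3.1}) for the ratio $w/v$ under an operator $\mathscr{L}$ with a carefully chosen positive semidefinite coefficient matrix $\widetilde{G}$. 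Two features of this device are exactly what your approach is missing. First, the term $\psi_{\xi}^{2}/(\psi+\beta)^{\alpha}$ forces $v\ge\beta^{-\alpha}\langle\xi,\nu\rangle^{2}+\beta^{5}$ on $\partial\Omega\times\mathbb{S}^{n}$, which is precisely what isolates the pure normal contribution $u_{\nu\nu}$ with the small prefactor $\beta^{\alpha}$ and produces the $\varepsilon\sup_{\partial\Omega}|D^{2}u|$ term; a plain $d(x)$ cutoff does not see the tangential/normal decomposition of $\xi$ and cannot generate this splitting. Second, no ellipticity lower bound is needed: Lemmas~\ref{thm2.3}--\ref{thm2.5} give $\operatorname{tr}(GD^{2}u_{\xi\xi})\ge -K\sigma_{1}^{-1/(k-1)}$, and at the putative maximum of $w/v$ one gets $\sigma_{1}[D^{2}u]\ge\beta^{-3}-C$ automatically (Step~5, inequality~\eqref{eqn3.19}), so the bad right-hand side is self-absorbing. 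This is the mechanism that replaces your use of \eqref{eqn1.6}.

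\textbf{On the exponent bookkeeping.} The paper never differentiates $f$ three times; $C^{2,1}$ suffices because the needed inequality is the pointwise one-variable bound of Lemma~\ref{thm2.2}\,(ii),
\[
\partial_{ee}g-\alpha\,\frac{|\partial_{e}g|^{2}}{g}\ \ge\ -K\,g^{1/3}\qquad(\alpha<\tfrac12),
\]
applied with $g=f^{3/(2k-2)}$. The restriction $k\ge5$ is exactly the condition $(k+2)/(3k)<\tfrac12$ that makes this lemma applicable in \eqref{eqn2.16}; your exponents $(2k-11)/(2k-2)$ etc.\ do not arise.
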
 

Secondly, the \textit{boundary estimate in terms of the interior one} for degenerate $k$-Hessian equations is also obtained, but under an additional a priori condition: 
there exists a small positive constants $r_{0}$ such that 
\begin{equation}\label{eqn1.6}
\Delta u\geq1\quad \text{in}\ \Omega\cap B_{r_{0}}(y),
\end{equation}
for all $y\in\partial\Omega$ at which $\Delta u(y)\gg1$.
Then the boundary second-order normal derivative estimate is established.

\begin{theorem}\label{thm10.3}
Let $\partial\Omega\in C^{3,1}$ be strictly $(k-1)$-convex, 
$\varphi\in C^{3,1}(\partial\Omega)$, and $f\geq0$ in $\Omega_{0}$ where $\Omega\Subset\Omega_{0}$.   
Let one of the following three conditions hold:
\begin{align*}
&(\romannumeral1)\quad f^{1/(k-1)}\in C^{1,1}(\overline{\Omega_{0}}),\ \ 2\leq k\leq n-1,\ \ \text{and}\ \eqref{eqn1.6}\ \text{holds};\\
&(\romannumeral2)\quad f^{3/(2k-2)}\in C^{2,1}(\overline{\Omega_{0}}),\ \ 5\leq k\leq n-1,\ \ \text{and}\ \eqref{eqn1.6}\ \text{holds};\\
&(\romannumeral3)\quad f^{3/(2k)}\in C^{2,1}(\overline{\Omega_{0}}),\ \ 2\leq k\leq n-1.
\end{align*}
Then there exists a unique $k$-admissible solution in $C^{1,1}(\overline{\Omega})$ for \eqref{eqn1.1}.
\end{theorem}

\begin{remark}[On condition $(\romannumeral1)$]
If condition \eqref{eqn1.6} in $(\romannumeral1)$ could be removed, then the longstanding open problem \eqref{eqn1.5} would be resolved.
When $\sup_{\partial\Omega}\Delta u$ is bounded from above, the $C^{2}$ estimate follows readily. 
We hope that Theorem \ref{thm10.3} may provide some insight into the resolution of the open problem \eqref{eqn1.5}.
Moreover, condition \eqref{eqn1.6} is automatically satisfied if $f$ admits a positive lower bound near $\partial\Omega$.  
\end{remark}

\begin{remark}[On condition $(\romannumeral3)$]
As pointed out by Guan-Trudinger-Wang \cite{Guan1999}, for applications to problems in differential geometry, it is desirable to impose no restriction on the nonnegative function $f$ apart from smoothness. 
Guan \cite{Guan1997} established the existence of a global $C^{1,1}$ solution for the Monge-Amp{\`e}re equation with homogeneous boundary condition in dimension $n=3$, assuming $0\leq f\in C^{3,1}(\mathbb{R}^{3})$. 
In comparison with the condition $f^{1/(k-1)}\in C^{1,1}$, it is interesting that the exponent of $f$ can be improved when higher regularity of $f$ is available, for $k\geq4$. 
\end{remark}

\begin{remark}
There is no implication between the conditions $f^{1/k}\in C^{1,1}(\overline{\Omega_{0}})$ and $f^{1/(k-q)}\in C^{2,1}(\overline{\Omega_{0}})$ for any $0<q<k$.  
This can be seen from the following two counterexamples.
One counterexample is $f^{1/(k-q)}:=|x|^{2}\in C^{\infty}(\overline{\Omega_{0}})$ with $0\in\partial\Omega$.
It is clear that $f^{1/k}=|x|^{2(k-q)/k}\notin C^{1,1}(\overline{\Omega_{0}})$.
The other counterexample is 
\[
f^{1/k}:=\left(1+g(x)\right) e^{-1/|x-x_{0}|} 
\]
in $\overline{\Omega_{0}}$, where $0$ and $x_{0}$ are two distinct points on $\partial\Omega$, and  
\begin{equation*}
g(x):=
\begin{cases}
|x|^{4}\left(1+\sin(1/|x|)\right),\quad x\neq0,\\
0,\quad x=0.
\end{cases}
\end{equation*}
One verifies that $g\in W^{2,\infty}$, hence $g\in C^{1,1}(\overline{\Omega_{0}})$, while $D^{2}g$ is not continuous at $0$. 
Thus $f^{1/k}\in C^{1,1}(\overline{\Omega_{0}})$, but $D^{2}(f^{1/(k-q)})$ fails to be continuous at $0$. 
\end{remark}

The rest of the paper is organized as follows. 
Section 2 presents preliminary results.
Section 3 establishes the global $C^{1}$ estimate and the boundary second-order tangential and tangential-normal estimates, and reduces the estimate of second-order derivatives to the boundary. 
Section 4 deals with the Monge-Amp{\`e}re equation under the optimal condition, proving Theorem \ref{thm10.1}. 
Section 5 is devoted to the \textit{weakly interior estimate}.
Section 6 develops the \textit{boundary estimate in terms of the interior one} to finish Theorem \ref{thm10.3}. 

Throughout the paper, the dependence of constants on the dimension $n$ and the number $k$ in \eqref{eqn1.1} is omitted, and a constant depending only on $n$ and $k$ is called universal. 
This section ends with a list of notations.
\par
\vspace{2mm}
\noindent\textbf{Notation.}
\par
\vspace{2mm}
\noindent 1. $e_{i}=(0,\dots,1,\dots,0)$ denotes the $i$-th standard coordinate vector.

\noindent 2. $x'=(x_{1},\dots,x_{n-1})$ and $x=(x',x_{n})$. 

\noindent 3. $B_{r}(x_{0})=\{x\in\mathbb{R}^{n}:\ |x-x_{0}|\leq r\}$ and $x_{0}$ is omitted when $x_{0}=0$. 

\noindent 4. $\operatorname{dist}(E,F)=\text{distance from $E$ to $F$}$,\quad $\forall E,\ F\subset\mathbb{R}^{n}$.

\noindent 5. $\mathbb{S}^{n-1}$ denotes the unit sphere in $\mathbb{R}^{n}$. 

\noindent 6. $\mathbf{I}_{n}$ denotes the $n\times n$ identity matrix. 

\noindent 7. $A'=A^{T}$ denotes the transpose of the matrix $A$. 

\noindent 8. $A^{1/2}$ denotes the square root of positive semi-definite matrix $A$.

\noindent 9. $\nabla_{x'}=(\partial_{1},\partial_{2},\dots,\partial_{n-1})$.

\noindent 10. $D^{2}_{x'x'}$ denotes a matrix with elements $\partial_{ij}$ where $1\leq i,\ j\leq n-1$.   

\noindent 11. $\nu=\nu(x)$ denotes the unit interior normal vector of $\partial\Omega$ at $x\in\partial\Omega$.

\section{Preliminaries}

The following pointwise inequalities are fundamental in this paper.  

\begin{lemma}\label{thm2.2}
Let $g$ be a function satisfying $g>0$ in $\Omega_{0}$, where $\Omega\Subset\Omega_{0}$.
We have  

$(\romannumeral1)$ If $g\in C^{1,1}(\overline{\Omega_{0}})$, then 
\begin{equation}\label{eqn7.1}
\frac{\left|\nabla g(x)\right|^{2}}{g(x)}\leq K,\quad \forall x\in\overline{\Omega},
\end{equation}
where $K$ depends on $\|g\|_{C^{1,1}(\overline{\Omega_{0}})}$ and $\operatorname{dist}(\Omega,\partial\Omega_{0})$, but is independent of $\inf_{\Omega_{0}} g$.

$(\romannumeral2)$ If $g\in C^{2,1}(\overline{\Omega_{0}})$, then for any $\alpha<1/2$, 
\begin{equation}\label{eqn7.2}
\partial_{ee}g(x)-\alpha\frac{\left|\partial_{e}g(x)\right|^{2}}{g(x)}\geq-Kg^{\frac{1}{3}}(x),\quad \forall x\in\overline{\Omega},\ e\in\mathbb{S}^{n-1},
\end{equation}
where $K$ depends on $\alpha$, $\|g\|_{C^{2,1}(\overline{\Omega_{0}})}$, and  $\operatorname{dist}(\Omega,\partial\Omega_{0})$, but is independent of $\inf_{\Omega_{0}} g$.
\end{lemma}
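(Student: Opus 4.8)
The plan is to treat the two inequalities separately, in each case reducing to a one-dimensional Taylor-expansion argument along a line segment that starts at the point $x\in\overline{\Omega}$ and travels a fixed distance (controlled from below by $\operatorname{dist}(\Omega,\partial\Omega_0)$) in the direction in which the relevant derivative points. The crucial structural feature we exploit is that $g\geq 0$ throughout $\Omega_0$: positivity of $g$ at the endpoint of the segment, combined with the mean-value form of Taylor's theorem, forces the lower-order derivative data at $x$ to be controlled by $g(x)$ itself, not merely by $\sup g$.

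For part $(\romannumeral 1)$, fix $x\in\overline{\Omega}$ and set $r_0:=\operatorname{dist}(\Omega,\partial\Omega_0)>0$, so that $B_{r_0}(x)\subset\Omega_0$. Write $e:=\nabla g(x)/|\nabla g(x)|$ (if $\nabla g(x)=0$ there is nothing to prove) and consider $\phi(t):=g(x-te)$ for $t\in[0,r_0]$. Since $g\in C^{1,1}$, $\phi$ is $C^{1,1}$ with $|\phi'(0)|=|\nabla g(x)|$ and $\|\phi''\|_{L^\infty}\leq \|D^2 g\|_{L^\infty(\overline{\Omega_0})}=:M$. By Taylor's theorem with integral remainder, $0\leq \phi(t)\leq \phi(0)+\phi'(0)t+\tfrac12 M t^2$; choosing $e$ so that $\phi'(0)=-|\nabla g(x)|$ gives $0\leq g(x)-|\nabla g(x)|\,t+\tfrac12 M t^2$ for all $t\in[0,r_0]$. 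If $|\nabla g(x)|/M\leq r_0$, optimizing in $t$ (take $t=|\nabla g(x)|/M$) yields $|\nabla g(x)|^2\leq 2M g(x)$; otherwise $|\nabla g(x)|\leq M r_0$, and then $|\nabla g(x)|^2/g(x)$ is bounded once we note that this forces $g(x)$ to be comparable to its size at distance $r_0$ — more simply, in this regime one uses $t=r_0$ to get $|\nabla g(x)|\,r_0\leq g(x)+\tfrac12 M r_0^2$, hence $|\nabla g(x)|^2\leq \big(\tfrac{g(x)}{r_0}+\tfrac12 M r_0\big)|\nabla g(x)|\leq \big(\tfrac{1}{r_0}+\tfrac{M r_0}{2\,|\nabla g(x)|^{ }}\cdot|\nabla g(x)|\big)g(x)\cdot\text{(bounded factor)}$; in either case one obtains \eqref{eqn7.1} with $K=K(M,r_0)$. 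I expect a clean uniform statement $|\nabla g|^2\leq C(\|g\|_{C^{1,1}},r_0)\,g$ after tidying these two cases.

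For part $(\romannumeral 2)$, fix $\alpha<1/2$, $x\in\overline{\Omega}$, $e\in\mathbb S^{n-1}$, and put $\psi(t):=g(x+te)$ on $|t|\leq r_0$, which is $C^{2,1}$ with $\|\psi'''\|_{L^\infty}\leq \|g\|_{C^{2,1}(\overline{\Omega_0})}=:M$. Taylor expansion to third order gives, for both signs of $t$,
\[
0\leq \psi(\pm t)=\psi(0)\pm\psi'(0)t+\tfrac12\psi''(0)t^2\pm R_\pm,\qquad |R_\pm|\leq \tfrac16 M t^3.
\]
The idea is to choose $t=t(x)$ proportional to an appropriate power of $g(x)$ so that the cubic remainder is absorbed into $g^{1/3}(x)$; the natural scale, given that we want the bound $-Kg^{1/3}$, is $t\sim g(x)^{1/3}$ (capped at $r_0$). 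Adding the two inequalities eliminates the first-order term and bounds $\psi''(0)$ from below by $-\tfrac{2g(x)}{t^2}-\tfrac13 M t$; with $t\asymp g^{1/3}(x)$ this is $\gtrsim -g^{1/3}(x)$ times a constant — but this crude estimate loses the first-order term entirely and does not yet produce the subtracted quantity $\alpha|\partial_e g(x)|^2/g(x)$. To get that term one must instead keep the first-order contribution: from the ``$-t$'' inequality, $\psi''(0)\geq \tfrac{2}{t^2}\big(\psi'(0)t-\psi(0)\big)-\tfrac13 Mt = \tfrac{2\psi'(0)}{t}-\tfrac{2g(x)}{t^2}-\tfrac13 Mt$, and optimizing the right side over $t>0$ (treating $\psi'(0)$ as fixed) makes the first two terms combine into something like $\tfrac{\psi'(0)^2}{2g(x)}$ up to the cubic error — this is where the sharp constant $\alpha<1/2$ enters, as the optimal $t\approx 2g(x)/\psi'(0)$ only lies in $[0,r_0]$ when $\psi'(0)$ is not too small relative to $g(x)$, and the factor $1/2$ in front of $\psi'(0)^2/g(x)$ degrades to any $\alpha<1/2$ once the cubic remainder is accounted for.

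The main obstacle, and the step I would spend the most care on, is the bookkeeping in $(\romannumeral 2)$: one must split into the regime where the ``optimal'' $t\approx 2g(x)/|\partial_e g(x)|$ is admissible (i.e.\ $\leq r_0$) and the regime where it is not (large $|\partial_e g(x)|$ relative to $g(x)$, which by part $(\romannumeral 1)$ cannot actually happen for $C^{2,1}\subset C^{1,1}$ functions — so part $(\romannumeral 1)$ should be invoked here to rule it out or to supply $|\partial_e g|^2\leq K' g$ and thereby control the shortfall), and one must verify that in the admissible regime the leftover cubic term $\tfrac13 M t\lesssim g(x)/|\partial_e g(x)|\cdot M$ is dominated by $Kg^{1/3}(x)$ after possibly shrinking $t$ — equivalently, choosing $t=\min\{2g(x)/|\partial_e g(x)|,\ c\,g(x)^{1/3},\ r_0\}$ and checking all three cases. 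Once the case analysis is organized, each case is a one-line estimate. I do not anticipate difficulty beyond this combinatorial split; no compactness or PDE input is needed, only Taylor's theorem and the sign condition $g\geq0$.
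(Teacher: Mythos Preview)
Your plan is correct and essentially the same as the paper's: both parts reduce to a one-variable Taylor expansion along the direction $e$, using $g\geq 0$ at points displaced from $x$ by order $g(x)^{1/2}$ for (i) and order $g(x)^{1/3}$ for (ii); the sharp threshold $\alpha<1/2$ arises in both arguments from the same quadratic optimization. The paper organizes (ii) by a case split on whether $\partial_{ee}g(x)\lessgtr K_1 g(x)^{1/3}$ and, in the large-$\partial_{ee}g$ case, evaluates at $t=\sqrt{2g(x)/\partial_{ee}g(x)}$ (optimizing the upper bound on $|\partial_e g|$), whereas you split on which of $\{2g/|\partial_e g|,\,c\,g^{1/3},\,r_0\}$ is smallest; both bookkeepings work.

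Two small slips to fix before writing it up: in (i) your ``otherwise'' regime is $|\nabla g(x)|>Mr_0$, not $\leq$; in (ii) the non-admissible regime $2g/|\partial_e g|>r_0$ means $|\partial_e g|$ is \emph{small} relative to $g$, not large, so invoking part (i) there is beside the point --- in that case (or whenever $c\,g^{1/3}$ is the active minimum) you have $|\partial_e g|^2/g\lesssim g^{1/3}$ directly, and combining with the symmetric estimate $\partial_{ee}g\geq -Cg^{1/3}$ (from adding the $\pm t$ inequalities) finishes.
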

\begin{proof}
In fact, we only need to consider the case of one dimension.
Let $h(x)$ be a function in $C^{2,1}(\mathbb{R})$ and $h>0$ on $\mathbb{R}$. 
Denote $\beta:=h(0)>0$. 

\par
\vspace{2mm}
\textbf{Step 1:} \emph{we prove conclusion $(\romannumeral1)$.} 
 
It holds that  
\begin{equation*}
\begin{cases}
0\leq h\big(\sqrt{\beta}\big)=\beta+h'(0)\sqrt{\beta}+O(\beta)\\
0\leq h\big(-\sqrt{\beta}\big)=\beta-h'(0)\sqrt{\beta}+O(\beta).
\end{cases}
\end{equation*} 
Thus 
\[
\frac{\left|h'(0)\right|}{\sqrt{\beta}}\leq1+K,
\]
where $K=\|h\|_{C^{1,1}(\mathbb{R})}$. 
By virtue of the above, \eqref{eqn7.1} are valid at points where $(g(x))^{1/2}<\operatorname{dist}(\Omega,\partial\Omega_{0})$. 
And \eqref{eqn7.1} hold clearly at points where $(g(x))^{1/2}\geq\operatorname{dist}(\Omega,\partial\Omega_{0})$.  

\par
\vspace{2mm}
\textbf{Step 2:} \emph{we prove conclusion $(\romannumeral2)$.} 

It holds that  
\[
0\leq h\big(\beta^{t_{1}}\big)+h\big(-\beta^{t_{1}}\big)=2\beta+h''(0)\beta^{2t_{1}}+O\big(\beta^{3t_{1}}\big), 
\]
where $t_{1}>0$ is to be determined. 
Thus 
\[
h''(0)\geq-2\beta^{1-2t_{1}}+O\big(\beta^{t_{1}}\big).
\]
Taking $t_{1}=1/3$ yields 
\begin{equation}\label{eqn7.3}
h''(0)\geq-C\beta^{\frac{1}{3}},
\end{equation}
where $C$ depends only on $\|h\|_{C^{2,1}(\mathbb{R})}$. 

It holds that  
\begin{equation*}
\begin{cases}
0\leq h\big(\beta^{t_{2}}\big)=\beta+h'(0)\beta^{t_{2}}+\frac{1}{2}h''(0)\beta^{2t_{2}}+O\big(\beta^{3t_{2}}\big)\\
0\leq h\big(-\beta^{t_{2}}\big)=\beta-h'(0)\beta^{t_{2}}+\frac{1}{2}h''(0)\beta^{2t_{2}}+O\big(\beta^{3t_{2}}\big),
\end{cases}
\end{equation*}
where $t_{2}>0$ is to be determined. 
Thus  
\[
\frac{\left|h'(0)\right|}{\sqrt{\beta}}\leq\beta^{\frac{1}{2}-t_{2}}+\frac{1}{2}h''(0)\beta^{t_{2}-\frac{1}{2}}+O\big(\beta^{2t_{2}-\frac{1}{2}}\big).
\]
Taking $t_{2}=1/3$ yields 
\begin{equation}\label{eqn7.4}
\frac{\left|h'(0)\right|}{\sqrt{\beta}}\leq\frac{1}{2}h''(0)\beta^{-\frac{1}{6}}+C\beta^{\frac{1}{6}}, 
\end{equation}
where $C$ depends only on $\|h\|_{C^{2,1}(\mathbb{R})}$. 

\par
\vspace{2mm}
\textbf{Case 1}: \emph{$h''(0)\leq K_{1}\beta^{1/3}$, where $K_{1}>0$ is to be determined.} 
It follows from \eqref{eqn7.4} that 
\[
\frac{\left|h'(0)\right|^{2}}{\beta}\leq\left(K_{1}/2+C\right)^{2}\beta^{\frac{1}{3}}.
\]
Combining this inequality and \eqref{eqn7.3}, we get 
\[
h''(0)-\alpha\frac{\left|h'(0)\right|^{2}}{h(0)}\geq-\left( C+\alpha(K_{1}/2+C)^{2}\right) h^{\frac{1}{3}}(0)\quad \text{in Case 1}. 
\]

\par
\vspace{2mm}
\textbf{Case 2}: \emph{$h''(0)>K_{1}\beta^{1/3}$.}
It holds that 
\begin{equation*}
\begin{cases}
0\leq h\big(t_{3}\sqrt{\beta}\big)=\beta+h'(0)t_{3}\sqrt{\beta}+\frac{1}{2}h''(0)\left(t_{3}\sqrt{\beta}\right)^{2}+O\big(\big(t_{3}\sqrt{\beta}\big)^{3}\big)\\
0\leq
h\big(-t_{3}\sqrt{\beta}\big)=\beta-h'(0)t_{3}\sqrt{\beta}+\frac{1}{2}h''(0)\left(t_{3}\sqrt{\beta}\right)^{2}+O\big(\big(t_{3}\sqrt{\beta}\big)^{3}\big), 
\end{cases}
\end{equation*}
where $t_{3}>0$ is to be determined. 
Thus 
\[
\frac{\left|h'(0)\right|}{\sqrt{\beta}}\leq\frac{1}{t_{3}}+\frac{1}{2}h''(0)t_{3}+Ct_{3}^{2}\beta^{\frac{1}{2}},
\]
where $C$ depends only on $\|h\|_{C^{2,1}(\mathbb{R})}$. 
Taking $t_{3}=\sqrt{2}\left(h''(0)\right)^{-1/2}$, which implies 
\[
t_{3}\sqrt{\beta}=\sqrt{2}\left(h''(0)\right)^{-\frac{1}{2}}\sqrt{\beta}<\sqrt{2}K_{1}^{-\frac{1}{2}}\beta^{\frac{1}{3}}, 
\]
we have 
\begin{align*}
\frac{\left|h'(0)\right|}{\sqrt{\beta}}&\leq\sqrt{2}\sqrt{h''(0)}+2C\left(h''(0)\right)^{-\frac{3}{2}}\beta^{\frac{1}{2}}\sqrt{h''(0)}\\
&<\left(\sqrt{2}+2CK_{1}^{-\frac{3}{2}}\right)\sqrt{h''(0)}.
\end{align*}
Given that $\alpha<1/2$, choose $K_{1}$ sufficiently large such that 
\[
\sqrt{2}+2CK_{1}^{-\frac{3}{2}}\leq\frac{1}{\sqrt{\alpha}}.
\]
It follows that 
\[
h''(0)-\alpha\frac{\left|h'(0)\right|^{2}}{h(0)}\geq0\quad \text{in Case 2}.
\]

Combining Case 1 and Case 2 gives  
\[
h''(0)-\alpha\frac{\left|h'(0)\right|^{2}}{h(0)}\geq-Kh^{\frac{1}{3}}(0), 
\]
where $K$ depends on $\|h\|_{C^{2,1}(\mathbb{R})}$ and $\alpha$. 
By virtue of the above, \eqref{eqn7.2} are valid at points in $\overline{\Omega}$ satisfying 
\[
\max\left\{1,\sqrt{2}K_{1}^{-1/2}\right\}g^{\frac{1}{3}}(x)<\operatorname{dist}(\Omega,\partial\Omega_{0}). 
\] 
And \eqref{eqn7.2} hold clearly at other points in $\overline{\Omega}$. 
The proof of Lemma \ref{thm2.2} is complete. 
\end{proof}

\begin{remark}\label{thm2.9}
In general, it is impossible to weaken the condition $g>0$ in $\Omega_{0}$ to $g>0$ in $\Omega$. 
A counterexample was provided by B{\l}ocki \cite{Blocki2003}: the function  $g(x):=x$ on $[0,1]$ is smooth but $\sqrt{g}\notin C^{0,1}([0,1])$.  
However, the condition $g>0$ in $\Omega$ suffices to estimate the derivatives of $g$ in directions tangential to $\partial\Omega$. 
\end{remark}

\begin{remark}
In general, it is impossible to improve the exponent $\alpha$ to any $\alpha>1/2$. 
Consider the family of functions $g_{\beta}(x):=(x+\beta^{1/2})^{2}$, where $\beta>0$ is a parameter.
It is clear that $g_{\beta}>0$ and $g_{\beta}\in C^{\infty}(\mathbb{R})$. 
Computing \eqref{eqn7.2} at $x=0$ and letting $\beta\rightarrow0$ yield that $\alpha\leq1/2$. 
\end{remark}

By convention, we set $\sigma_{0}(\lambda)\equiv 1$.
Some fundamental properties of $\sigma_k$ are listed below, and one can refer to \cite{Spruck2005,Hardy1952,Lieberman1996,Wang2009}. 

\begin{proposition}\label{thm2.1}
For any $k\in\{2,3,\dots,n\}$ and any symmetric matrix $A$ with $\lambda(A)\in\Gamma_{k}$, the following hold:
\begin{equation}\label{eqn2.1}
\sigma_{k}^{\frac{1}{k}}[A]\leq C_{1}\sigma_{k-1}^{\frac{1}{k-1}}[A];
\end{equation}
\begin{equation}\label{eqn2.2}
\sigma_{k-1}[A]\geq C_{2}\sigma_{1}^{\frac{1}{k-1}}[A]\sigma_{k}^{\frac{k-2}{k-1}}[A];
\end{equation}
\begin{equation}\label{eqn2.3}
\sum_{i=1}^{n}\sigma_{k}^{ii}[A]=(n-k+1)\sigma_{k-1}[A],
\end{equation}
where $C_{1}>0$ and $C_{2}>0$ are universal. 
\end{proposition}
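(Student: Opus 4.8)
The plan is to derive all three statements from classical facts about the normalized elementary symmetric functions $p_{j}:=\sigma_{j}(\lambda)/\binom{n}{j}$, with $p_{0}=1$. The starting point is Newton's inequalities $p_{j-1}p_{j+1}\le p_{j}^{2}$, valid for every $\lambda\in\mathbb{R}^{n}$ (see \cite{Hardy1952}). Since $\lambda(A)\in\Gamma_{k}$ forces $p_{0},p_{1},\dots,p_{k}>0$, the ratios $r_{j}:=p_{j-1}/p_{j}$ for $1\le j\le k$ are well defined, and Newton's inequalities — rewritten as $p_{j-1}/p_{j}\le p_{j}/p_{j+1}$ — show that $r_{1}\le r_{2}\le\cdots\le r_{k}$. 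I would also record the identities $p_{j}=(r_{1}r_{2}\cdots r_{j})^{-1}$, which follow from $p_{0}=1$.

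First, for \eqref{eqn2.1} I would note that $p_{k-1}^{1/(k-1)}\ge p_{k}^{1/k}$ is, after clearing exponents, equivalent to $r_{1}\cdots r_{k-1}\le r_{k}^{\,k-1}$, which is immediate because the left-hand side is a product of $k-1$ factors each bounded by $r_{k}$; passing from $p_{j}$ back to $\sigma_{j}$ yields \eqref{eqn2.1} with $C_{1}=\binom{n}{k}^{1/k}\binom{n}{k-1}^{-1/(k-1)}$, which is universal. For \eqref{eqn2.2}, substituting the formulas for $p_{1},p_{k-1},p_{k}$ in terms of the $r_{j}$ reduces the claimed inequality $p_{k-1}\ge c\,p_{1}^{1/(k-1)}p_{k}^{(k-2)/(k-1)}$, after an elementary rearrangement, to $r_{2}\cdots r_{k-1}\le r_{k}^{\,k-2}$ — again a product of $k-2$ factors each at most $r_{k}$ — so the inequality holds with $c=1$ at the level of the $p_{j}$, and reinstating the binomial coefficients produces a universal $C_{2}$.

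For \eqref{eqn2.3}, I would use orthogonal invariance: writing $\sigma_{k}^{ii}[A]:=\partial\sigma_{k}[A]/\partial a_{ii}$, both $\sigma_{k}$ and the map $A\mapsto\sum_{i}\sigma_{k}^{ii}[A]=\operatorname{tr}\bigl(\partial\sigma_{k}/\partial A\bigr)$ are invariant under $A\mapsto OAO^{T}$, so it suffices to verify the identity for $A=\operatorname{diag}(\lambda)$. There $\sigma_{k}^{ii}[A]=\partial\sigma_{k}(\lambda)/\partial\lambda_{i}=\sigma_{k-1}(\lambda\mid i)$, the $(k-1)$-st elementary symmetric function of the $n-1$ variables $\{\lambda_{j}:j\ne i\}$; summing over $i$, every monomial $\lambda_{j_{1}}\cdots\lambda_{j_{k-1}}$ of $\sigma_{k-1}(\lambda)$ is counted once for each of the $n-(k-1)$ indices $i\notin\{j_{1},\dots,j_{k-1}\}$, giving $\sum_{i}\sigma_{k}^{ii}[A]=(n-k+1)\sigma_{k-1}(\lambda)$.

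The content is entirely classical, so there is no real obstacle; the only points needing care are the elementary algebra in the rearrangement for \eqref{eqn2.2} and the bookkeeping of the binomial factors confirming that $C_{1}$ and $C_{2}$ depend only on $n$ and $k$, together with the observation — used throughout — that $\Gamma_{k}$-admissibility is precisely what guarantees the positivity of $p_{0},\dots,p_{k}$ that makes the Newton–Maclaurin ratio argument available.
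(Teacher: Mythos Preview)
Your argument is correct. The paper does not give its own proof of this proposition: it simply states the three facts and refers the reader to \cite{Spruck2005,Hardy1952,Lieberman1996,Wang2009}. Your route via Newton's inequalities $p_{j-1}p_{j+1}\le p_{j}^{2}$ and the monotonicity of the ratios $r_{j}=p_{j-1}/p_{j}$ is exactly the classical Newton--Maclaurin argument that those references (in particular \cite{Hardy1952}) supply, and your combinatorial derivation of \eqref{eqn2.3} is the standard one as well. So your proposal fills in precisely what the paper outsources to the literature, and there is nothing to compare beyond noting that.
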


\par
\vspace{2mm}
The equation \eqref{eqn1.1} is equivalent to 
\begin{equation}\label{eqn2.5}
F\big[D^{2}u\big]=g^{\frac{p}{k}}\quad \text{in}\ \Omega, 
\end{equation}
where 
\[
F[A]:=\sigma_{k}^{\frac{1}{k}}[A]\quad \text{and}\quad g:=f^{\frac{1}{p}}, 
\]
with 
\begin{equation}\label{eqn7.7}
p:=
\begin{cases}
k-1,\quad \text{if}\ f^{1/(k-1)}\in C^{1,1}(\overline{\Omega_{0}}),\\
(2k-2)/3,\quad \text{if}\ f^{3/(2k-2)}\in C^{2,1}(\overline{\Omega_{0}}). 
\end{cases}
\end{equation}
Define $F^{ij}[A]:=\partial F/\partial A_{ij}$ and $F^{ij,st}[A]:=\partial^{2} F/\partial A_{ij}\partial A_{st}$. 
Differentiating equation \eqref{eqn2.5} in the direction $e\in\mathbb{S}^{n-1}$ gives
\begin{equation}\label{eqn2.7}
F^{ij}\big[D^{2}u\big]u_{eij}=\frac{p}{k}g^{\frac{p}{k}-\frac{1}{2}}\frac{\partial_{e}g}{\sqrt{g}}=\left(\frac{p}{k}g^{\frac{p}{k-1}-\frac{1}{2}}\frac{\partial_{e}g}{\sqrt{g}}\right)f^{\frac{-1}{k(k-1)}}
\end{equation}
for all $e\in\mathbb{S}^{n-1}$. 
A second differentiation combined with the concavity of $F$ yields 
\[
\partial_{ee}\left(g^{\frac{p}{k}}\right)=F^{ij}\big[D^{2}u\big]u_{eeij}+F^{ij,st}\big[D^{2}u\big]u_{eij}u_{est}\leq F^{ij}\big[D^{2}u\big]u_{eeij}, 
\]
which implies 
\begin{equation}\label{eqn2.16}
F^{ij}\big[D^{2}u\big]u_{eeij}\geq\frac{p}{k}g^{\frac{p}{k}-1}\left(\partial_{ee}g-\frac{k-p}{k}\frac{\left|\partial_{e}g\right|^{2}}{g}\right)
\end{equation}
for all $e\in\mathbb{S}^{n-1}$. 

Define a matrix-valued function $G$ with entries 
\begin{equation}\label{eqn2.6}
G^{ij}(x):=\frac{F^{ij}\big[D^{2}u(x)\big]}{\operatorname{tr}F^{ij}\big[D^{2}u(x)\big]},\quad \forall x\in\Omega. 
\end{equation}

\begin{lemma}\label{thm2.3}
Let $u\in C^{3,1}(\overline{\Omega})$ be a $k$-admissible solution of equation \eqref{eqn1.1}, and $\inf_{\Omega_{0}}f>0$ where  $\Omega\Subset\Omega_{0}$.  
Let $f^{1/(k-1)}\in C^{1,1}(\overline{\Omega_{0}})$ hold or $f^{3/(2k-2)}\in C^{1,1}(\overline{\Omega_{0}})$ hold. 
Then  
\begin{equation}\label{eqn2.11}
\operatorname{tr}F^{ij}\big[D^{2}u\big]\geq\frac{1}{C}\max\left\{1,\sigma_{1}^{\frac{1}{k-1}}\big[D^{2}u\big]f^{\frac{-1}{k(k-1)}}\right\};
\end{equation}
\begin{equation}\label{eqn2.12}
0\leq\operatorname{tr}\left(GD^{2}u\right)\leq K;
\end{equation}
\begin{equation}\label{eqn2.13}
\left|\operatorname{tr}\left(GD^{2}u_{\xi}\right)\right|\leq K\left|\xi\right|\sigma_{1}^{\frac{-1}{k-1}}\big[D^{2}u\big],\quad \forall \xi\in\mathbb{R}^{n}, 
\end{equation}
in $\Omega$, where $C>0$ is universal, and $K$ depends on  $\operatorname{dist}(\Omega,\partial\Omega_{0})$, and either $\|f^{1/(k-1)}\|_{C^{1,1}(\overline{\Omega_{0}})}$ or  $\|f^{3/(2k-2)}\|_{C^{1,1}(\overline{\Omega_{0}})}$, but is independent of $\inf_{\Omega_{0}} f$. 
\end{lemma}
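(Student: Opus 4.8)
The plan is to reduce all three estimates to the explicit formula for $\operatorname{tr}F^{ij}[D^{2}u]$ and then feed in Proposition \ref{thm2.1} and Lemma \ref{thm2.2}. First I would compute, from $F=\sigma_{k}^{1/k}$ (so $F^{ij}=\tfrac1k\sigma_{k}^{1/k-1}\sigma_{k}^{ij}$) together with \eqref{eqn2.3}, the identity $\operatorname{tr}F^{ij}[D^{2}u]=\tfrac{n-k+1}{k}\,\sigma_{k}^{1/k-1}[D^{2}u]\,\sigma_{k-1}[D^{2}u]$. Throughout I use $\sigma_{k}[D^{2}u]=f$ and write $g:=f^{1/p}$ with $p$ as in \eqref{eqn7.7}, so that either hypothesis gives $g\in C^{1,1}(\overline{\Omega_{0}})$ and $g>0$ in $\Omega_{0}$. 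For \eqref{eqn2.11}: substituting \eqref{eqn2.1} in the form $\sigma_{k-1}\geq C_{1}^{-(k-1)}\sigma_{k}^{(k-1)/k}$ into the trace formula makes every power of $\sigma_{k}$ cancel, giving $\operatorname{tr}F^{ij}\geq c(n,k)>0$; substituting \eqref{eqn2.2} instead gives $\operatorname{tr}F^{ij}\geq c(n,k)\,\sigma_{1}^{1/(k-1)}\sigma_{k}^{1/k-1+(k-2)/(k-1)}$, and the $\sigma_{k}$-exponent collapses to $-1/(k(k-1))$ by elementary algebra, so $\operatorname{tr}F^{ij}\geq c(n,k)\,\sigma_{1}^{1/(k-1)}f^{-1/(k(k-1))}$. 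Taking the maximum of the two lower bounds is \eqref{eqn2.11}.

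For \eqref{eqn2.12} I would invoke Euler's identity for the positively $1$-homogeneous $F$, which gives $\operatorname{tr}(GD^{2}u)=(\operatorname{tr}F^{ij})^{-1}\sum_{ij}F^{ij}u_{ij}=(\operatorname{tr}F^{ij})^{-1}F[D^{2}u]=(\operatorname{tr}F^{ij})^{-1}f^{1/k}\geq0$; the upper bound then follows from the universal lower bound $\operatorname{tr}F^{ij}\geq c(n,k)$ just obtained and from $\sup_{\overline{\Omega}}f\leq\|g\|_{C^{1,1}(\overline{\Omega_{0}})}^{p}$, so that $\operatorname{tr}(GD^{2}u)\leq Cf^{1/k}\leq K$.

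For \eqref{eqn2.13}: writing $\xi=|\xi|e$ with $e\in\mathbb{S}^{n-1}$ and differentiating \eqref{eqn2.5} in the direction $e$ (this is exactly \eqref{eqn2.7}) gives $\sum_{ij}F^{ij}u_{\xi ij}=|\xi|\tfrac{p}{k}g^{p/k-1/2}\,\partial_{e}g/\sqrt{g}$, whence
\[
\big|\operatorname{tr}(GD^{2}u_{\xi})\big|\leq\frac{p}{k}\,|\xi|\,\frac{|\partial_{e}g|}{\sqrt{g}}\cdot\frac{g^{p/k-1/2}}{\operatorname{tr}F^{ij}}.
\]
Here $|\partial_{e}g|/\sqrt{g}\leq\sqrt{K}$ by Lemma \ref{thm2.2}$(\romannumeral1)$ applied to $g$, and $(\operatorname{tr}F^{ij})^{-1}\leq C\,\sigma_{1}^{-1/(k-1)}f^{1/(k(k-1))}$ by \eqref{eqn2.11}; using $g^{p/k}=f^{1/k}$ and $g^{-1/2}=f^{-1/(2p)}$ one finds $g^{p/k-1/2}f^{1/(k(k-1))}=f^{1/(k-1)-1/(2p)}$, whose exponent equals $1/(2(k-1))$ when $p=k-1$ and $1/(4(k-1))$ when $p=(2k-2)/3$. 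Both being nonnegative, this power of $f$ is bounded by a constant depending only on $\|g\|_{C^{1,1}(\overline{\Omega_{0}})}$, and combining the factors gives \eqref{eqn2.13}.

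I expect no serious obstacle here; the work is essentially fractional-exponent bookkeeping. The one genuinely structural point is the final step of \eqref{eqn2.13}, where the residual power of $f$ must be nonnegative, i.e. $p\geq(k-1)/2$ — which is exactly what both admissible values $p\in\{k-1,(2k-2)/3\}$, and hence the hypotheses $f^{1/(k-1)}\in C^{1,1}(\overline{\Omega_{0}})$ or $f^{3/(2k-2)}\in C^{1,1}(\overline{\Omega_{0}})$, guarantee.
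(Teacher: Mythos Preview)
Your proposal is correct and follows essentially the same route as the paper: both derive the trace formula $\operatorname{tr}F^{ij}=\tfrac{n-k+1}{k}\sigma_{k}^{(1-k)/k}\sigma_{k-1}$ via \eqref{eqn2.3}, feed in \eqref{eqn2.1}--\eqref{eqn2.2} for \eqref{eqn2.11}, use Euler homogeneity for \eqref{eqn2.12}, and for \eqref{eqn2.13} combine \eqref{eqn2.7}, Lemma~\ref{thm2.2}$(\romannumeral1)$, and \eqref{eqn2.11}. Your explicit bookkeeping of the residual $f$-exponent $1/(k-1)-1/(2p)$ (equivalently the paper's $g^{p/(k-1)-1/2}$) and the observation that the constraint $p\geq(k-1)/2$ is what makes both admissible values of $p$ work is a useful clarification the paper leaves implicit.
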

\begin{proof}
\textbf{Step 1.}
It follows from \eqref{eqn2.3} that  
\begin{align*}
\operatorname{tr}F^{ij}\big[D^{2}u\big]&=\frac{1}{k}\sigma_{k}^{\frac{1-k}{k}}\big[D^{2}u\big]\sum_{i=1}^{n}\sigma_{k}^{ii}\big[D^{2}u\big]\\
&=
\frac{n-k+1}{k}\sigma_{k}^{\frac{1-k}{k}}\big[D^{2}u\big]\sigma_{k-1}\big[D^{2}u\big].
\end{align*}
By \eqref{eqn2.1} and \eqref{eqn2.2}, we get \eqref{eqn2.11}. 

\par
\vspace{2mm}
\textbf{Step 2.} 
It follows from \eqref{eqn2.3} and the Euler theorem for homogeneous functions that
\[
\operatorname{tr}\left(GD^{2}u\right)=\frac{1}{n-k+1}\frac{\sigma_{k}^{ij}\big[D^{2}u\big]u_{ij}}{\sigma_{k-1}\big[D^{2}u\big]}=\frac{k}{n-k+1}\frac{\sigma_{k}\big[D^{2}u\big]}{\sigma_{k-1}\big[D^{2}u\big]}\geq0. 
\]
By \eqref{eqn2.1} and \eqref{eqn2.5}, one derives 
\[
\operatorname{tr}\left(GD^{2}u\right)\leq C\sigma_{k}^{\frac{1}{k}}\big[D^{2}u\big]=Cf^{\frac{1}{k}}\leq K,
\]
where $C$ is universal, and $K$ depends on $\|f\|_{C^{0}(\overline{\Omega})}$. 

\par
\vspace{2mm}
\textbf{Step 3.} 
The combination of \eqref{eqn2.11} and \eqref{eqn2.1} yields that 
\[
\left|\operatorname{tr}\left(GD^{2}u_{e}\right)\right|\leq C\sigma_{1}^{\frac{-1}{k-1}}\big[D^{2}u\big]g^{\frac{p}{k(k-1)}}\left|F^{ij}\big[D^{2}u\big]u_{eij}\right|
\] 
for all $e\in\mathbb{S}^{n-1}$.
Then it follows from \eqref{eqn2.7}, \eqref{eqn7.7}, and \eqref{eqn7.1} that 
\[
\left|\operatorname{tr}\left(GD^{2}u_{e}\right)\right|\leq C\sigma_{1}^{\frac{-1}{k-1}}\big[D^{2}u\big]\frac{p}{k}g^{\frac{p}{k-1}-\frac{1}{2}}\frac{\left|\nabla g\right|}{\sqrt{g}}\leq K\sigma_{1}^{\frac{-1}{k-1}}\big[D^{2}u\big] 
\]
for all $e\in\mathbb{S}^{n-1}$, where the constant $K$ depends on $\operatorname{dist}(\Omega,\partial\Omega_{0})$ and either $\|f^{1/(k-1)}\|_{C^{1,1}(\overline{\Omega_{0}})}$ or  $\|f^{3/(2k-2)}\|_{C^{1,1}(\overline{\Omega_{0}})}$. 
Thus \eqref{eqn2.13} is valid. 
\end{proof}

\begin{lemma}\label{thm2.5}
Let $u\in C^{3,1}(\overline{\Omega})$ be a $k$-admissible solution of equation \eqref{eqn1.1}, and $\inf_{\Omega_{0}}f>0$ where  $\Omega\Subset\Omega_{0}$.  
Let either $f^{1/(k-1)}\in C^{1,1}(\overline{\Omega_{0}})$ hold or  $f^{3/(2k-2)}\in C^{2,1}(\overline{\Omega_{0}})$ and $k\geq5$ hold.  
Then 
\begin{equation}\label{eqn2.14}
\operatorname{tr}\left(GD^{2}u_{\xi\xi}\right)\geq -K\left|\xi\right|^{2}\sigma_{1}^{\frac{-1}{k-1}}\big[D^{2}u\big],\quad \forall \xi\in\mathbb{R}^{n}, 
\end{equation}
in $\Omega$, where $K$ depends on  $\operatorname{dist}(\Omega,\partial\Omega_{0})$, and either $\|f^{1/(k-1)}\|_{C^{1,1}(\overline{\Omega_{0}})}$ or  $\|f^{3/(2k-2)}\|_{C^{2,1}(\overline{\Omega_{0}})}$, but is independent of $\inf_{\Omega_{0}}f$. 
\end{lemma}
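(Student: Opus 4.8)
The plan is to estimate $\operatorname{tr}(GD^2 u_{\xi\xi})$ from below by differentiating the equation \eqref{eqn2.5} twice along $\xi$ and using the concavity of $F$, exactly as was done to obtain \eqref{eqn2.16}, but now normalizing by $\operatorname{tr}F^{ij}$. Writing $\xi = |\xi| e$ with $e \in \mathbb{S}^{n-1}$, inequality \eqref{eqn2.16} gives
\[
\operatorname{tr}(GD^2 u_{\xi\xi}) = \frac{F^{ij}[D^2u]u_{\xi\xi ij}}{\operatorname{tr}F^{ij}[D^2u]} \geq \frac{|\xi|^2}{\operatorname{tr}F^{ij}[D^2u]}\cdot\frac{p}{k}g^{\frac{p}{k}-1}\left(\partial_{ee}g - \frac{k-p}{k}\frac{|\partial_e g|^2}{g}\right).
\]
So the task reduces to showing that the right-hand side is bounded below by $-K|\xi|^2\sigma_1^{-1/(k-1)}[D^2u]$. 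Using the lower bound \eqref{eqn2.11} for $\operatorname{tr}F^{ij}$, namely $\operatorname{tr}F^{ij}\geq \frac{1}{C}\sigma_1^{1/(k-1)}[D^2u]f^{-1/(k(k-1))}$, it suffices to prove
\[
g^{\frac{p}{k}-1}\left(\partial_{ee}g - \frac{k-p}{k}\frac{|\partial_e g|^2}{g}\right) \geq -K\,f^{\frac{1}{k(k-1)}} = -K\,g^{\frac{p}{k(k-1)}},
\]
i.e., $\partial_{ee}g - \frac{k-p}{k}\frac{|\partial_e g|^2}{g} \geq -K\,g^{1-\frac{p}{k}+\frac{p}{k(k-1)}} = -K\,g^{1-\frac{p(k-2)}{k(k-1)}}$.

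The key is then to check this pointwise inequality for the two choices of $p$ in \eqref{eqn7.7}, invoking Lemma \ref{thm2.2}. In the case $f^{1/(k-1)}\in C^{1,1}$, we have $p = k-1$, so $g = f^{1/(k-1)}$, the coefficient $\frac{k-p}{k} = \frac{1}{k}$, and the target exponent is $1 - \frac{(k-1)(k-2)}{k(k-1)} = \frac{2}{k} > 0$; here I would use part (i) of Lemma \ref{thm2.2} to bound $\frac{|\partial_e g|^2}{g}\leq K$ and the $C^{1,1}$ bound to control $\partial_{ee}g$ from below (merely $\geq -\|g\|_{C^{1,1}}$), giving $\partial_{ee}g - \frac{1}{k}\frac{|\partial_e g|^2}{g}\geq -K$, which is stronger than needed since $g$ is bounded below on a set separated from the zero set by a fixed distance (but we must be careful: where $g$ is small the bound $-K$ still dominates $-Kg^{2/k}$ only if $g\lesssim 1$, which holds on $\overline\Omega$). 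In the case $f^{3/(2k-2)}\in C^{2,1}$ with $k\geq 5$, we have $p = \frac{2k-2}{3}$, $g = f^{3/(2k-2)}$, the coefficient is $\frac{k-p}{k} = \frac{k+2}{3k}$, and I would apply part (ii) of Lemma \ref{thm2.2}: this requires $\frac{k+2}{3k} < \frac12$, equivalently $2(k+2) < 3k$, i.e., $k > 4$ — this is precisely where the hypothesis $k\geq 5$ enters. Lemma \ref{thm2.2}(ii) then yields $\partial_{ee}g - \frac{k+2}{3k}\frac{|\partial_e g|^2}{g}\geq -Kg^{1/3}$, and it remains to verify $g^{1/3} \leq C g^{1 - \frac{p(k-2)}{k(k-1)}}$ on $\overline\Omega$; computing $1 - \frac{p(k-2)}{k(k-1)} = 1 - \frac{2(k-2)}{3k} = \frac{k+4}{3k}$, and since $\frac{k+4}{3k}\leq \frac13$ iff $k+4\leq k$, which is false, we actually have $\frac{k+4}{3k} > \frac13$; but $g$ is bounded above on $\overline\Omega$, so $g^{1/3}\leq (\sup g)^{1/3 - (k+4)/(3k)}g^{(k+4)/(3k)} = C g^{(k+4)/(3k)}$, closing the estimate.

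The main obstacle will be tracking the exponents carefully and making sure the boundedness of $g$ from above on $\overline\Omega$ is invoked correctly to reconcile the exponent $1/3$ coming out of Lemma \ref{thm2.2}(ii) (or the constant $-K$ from part (i)) with the slightly different exponent $1-\frac{p(k-2)}{k(k-1)}$ that the $\operatorname{tr}F^{ij}$ normalization produces; since all quantities live on $\overline\Omega$ where $0 < \inf g$ (though this lower bound is not allowed in the constant) and $\sup g \leq \|f^{1/p}\|_{C^0}$ is controlled, any positive power of $g$ dominates any larger positive power up to a fixed constant, so these reconciliations are routine once the signs of the exponents are pinned down. A secondary point to handle with care is the restriction on $\alpha$ in Lemma \ref{thm2.2}(ii): I must confirm $\alpha = \frac{k-p}{k} = \frac{k+2}{3k}$ is strictly less than $\frac12$, which as noted forces $k\geq 5$, consistent with the statement. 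No interior/boundary geometry is needed here — this is a purely algebraic consequence of \eqref{eqn2.16}, \eqref{eqn2.11}, and Lemma \ref{thm2.2}.
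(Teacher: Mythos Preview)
Your overall approach is identical to the paper's, but there is a sign slip in your reduction that invalidates the exponent bookkeeping and makes your ``reconciliation'' steps fail. From \eqref{eqn2.11} you have $\operatorname{tr}F^{ij}\geq\frac{1}{C}\sigma_1^{1/(k-1)}f^{-1/(k(k-1))}$, hence $\dfrac{1}{\operatorname{tr}F^{ij}}\leq C\,\sigma_1^{-1/(k-1)}f^{1/(k(k-1))}$. If the bracketed quantity is negative, then
\[
\frac{1}{\operatorname{tr}F^{ij}}\cdot\frac{p}{k}\,g^{\frac{p}{k}-1}\Bigl(\partial_{ee}g-\tfrac{k-p}{k}\,\tfrac{|\partial_e g|^2}{g}\Bigr)
\ \geq\ C\,\sigma_1^{-1/(k-1)}f^{1/(k(k-1))}\cdot\frac{p}{k}\,g^{\frac{p}{k}-1}\Bigl(\cdots\Bigr),
\]
so the correct sufficient condition is
\[
g^{\frac{p}{k}-1}\Bigl(\partial_{ee}g-\tfrac{k-p}{k}\,\tfrac{|\partial_e g|^2}{g}\Bigr)\ \geq\ -K\,f^{-\frac{1}{k(k-1)}}=-K\,g^{-\frac{p}{k(k-1)}},
\]
with a \emph{negative} exponent on $f$, not the positive one you wrote. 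Dividing through, the target becomes
\[
\partial_{ee}g-\tfrac{k-p}{k}\,\tfrac{|\partial_e g|^2}{g}\ \geq\ -K\,g^{\,1-\frac{p}{k-1}}.
\]
For $p=k-1$ this exponent is $0$, so you need exactly $\partial_{ee}g-\tfrac{1}{k}\tfrac{|\partial_e g|^2}{g}\geq -K$, which follows from $g\in C^{1,1}$ and \eqref{eqn7.1}. For $p=\tfrac{2k-2}{3}$ the exponent is $1-\tfrac{2}{3}=\tfrac{1}{3}$, so the needed inequality is \emph{precisely} \eqref{eqn7.2} with $\alpha=\tfrac{k+2}{3k}<\tfrac12$ (this is where $k\geq 5$ is used). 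No reconciliation step is required; the exponents match on the nose, which is how the paper proceeds.

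Your attempted reconciliations in fact do not work: with your (incorrect) target exponents $2/k$ and $(k+4)/(3k)$, you would need $-K\geq -K'g^{2/k}$ and $g^{1/3}\leq C\,g^{(k+4)/(3k)}$ respectively. Both of these force $g$ to be bounded below by a positive constant, which is exactly what the statement forbids (the constant $K$ must be independent of $\inf_{\Omega_0}f$). Once the sign is fixed, everything you wrote about \eqref{eqn2.16}, \eqref{eqn2.11}, and the role of $k\geq 5$ is correct and matches the paper's argument.
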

\begin{proof} 
When $f^{1/(k-1)}\in C^{1,1}(\overline{\Omega_{0}})$, it follows from \eqref{eqn2.16}, \eqref{eqn2.11}, and \eqref{eqn7.1} that 
\begin{align*}
\operatorname{tr}\left(GD^{2}u_{ee}\right)&\geq-C\sigma_{1}^{\frac{-1}{k-1}}\big[D^{2}u\big]g^{\frac{p}{k(k-1)}} \frac{k-1}{k}g^{-\frac{1}{k}}\left|\partial_{ee}g-\frac{1}{k}\frac{\left|\partial_{e}g\right|^{2}}{g}\right|\\
&\geq-K_{1}\sigma_{1}^{\frac{-1}{k-1}}\big[D^{2}u\big]  
\end{align*}
for all $e\in\mathbb{S}^{n-1}$, where $C$ is universal, and $K_{1}$
depends on $\operatorname{dist}(\Omega,\partial\Omega_{0})$ and  $\|f^{1/(k-1)}\|_{C^{1,1}(\overline{\Omega_{0}})}$. 
When $f^{3/(2k-2)}\in C^{2,1}(\overline{\Omega_{0}})$ and $k\geq5$, which implies $(k+2)/3k<1/2$, it follows from \eqref{eqn2.16} and \eqref{eqn7.2} that 
\begin{align*}
\operatorname{tr}\left(GD^{2}u_{ee}\right)&\geq\frac{1}{\operatorname{tr}F^{ij}\big[D^{2}u\big]}\frac{p}{k}g^{-\frac{2}{3k}-\frac{1}{3}}\left(\partial_{ee}g-\frac{k+2}{3k}\frac{\left|\partial_{e}g\right|^{2}}{g}\right)\\
&\geq-\frac{1}{\operatorname{tr}F^{ij}\big[D^{2}u\big]}K_{2}g^{-\frac{2}{3k}} 
\end{align*}
for all $e\in\mathbb{S}^{n-1}$, where $K_{2}$ depends on $\operatorname{dist}(\Omega,\partial\Omega_{0})$ and $\|f^{3/(2k-2)}\|_{C^{2,1}(\overline{\Omega_{0}})}$. 
Combining this and \eqref{eqn2.11} yields 
\[
\operatorname{tr}\left(GD^{2}u_{ee}\right)\geq  -C\sigma_{1}^{\frac{-1}{k-1}}\big[D^{2}u\big]g^{\frac{p}{k(k-1)}}K_{2}g^{-\frac{2}{3k}}=-CK_{2}\sigma_{1}^{\frac{-1}{k-1}}\big[D^{2}u\big], 
\]
where $C$ is universal.  
Therefore \eqref{eqn2.14} holds.
\end{proof}

\par
\vspace{2mm}
Below, we construct an auxiliary function which depends only on $\Omega$. 

\begin{lemma}\label{thm2.4}
Let $\Omega\subset\mathbb{R}^n$ be a domain of class $C^{3,1}$ with $\partial\Omega$ strictly $(k-1)$-convex. 
Then there exists a function $\psi\in C^{3,1}(\overline{\Omega})$ satisfying 
\begin{equation}\label{eqn2.15}
\begin{cases}
\operatorname{tr}\left(GD^{2}\psi\right)\leq-1&\quad\text{in}\ \Omega,\\
\psi>0&\quad\text{in}\ \Omega,\\
\psi=0&\quad\text{on}\ \partial\Omega,\\
\frac{D\psi(x)}{|D\psi(x)|}=\nu(x)&\quad\text{on}\ \partial\Omega,\\
|D\psi|\geq1&\quad\text{on}\ \partial\Omega. 
\end{cases}
\end{equation} 
\end{lemma}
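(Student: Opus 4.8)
The plan is to construct $\psi$ from the signed distance function to $\partial\Omega$, corrected so that the operator $\operatorname{tr}(GD^2\cdot)$ becomes strongly negative. Let $d(x):=\operatorname{dist}(x,\partial\Omega)$; since $\partial\Omega\in C^{3,1}$, $d$ is $C^{3,1}$ in a one-sided neighborhood $\Omega\cap\{d<\rho_0\}$ of $\partial\Omega$, with $|Dd|=1$ there, and at a boundary point the eigenvalues of $D^2 d$ are $0$ (in the normal direction) together with $-\kappa_1,\dots,-\kappa_{n-1}$, where $\kappa_i$ are the principal curvatures of $\partial\Omega$ with respect to the interior normal $\nu$. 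The key structural fact I would exploit is that $G=F^{ij}/\operatorname{tr}F^{ij}$ is, by \eqref{eqn2.6}, a positive-definite matrix of unit trace whose eigenvalues are the normalized $\sigma_k^{ii}[D^2u]$; hence $\operatorname{tr}(GD^2 d)$ is a convex combination of the eigenvalues of $D^2 d$ in the eigenbasis of $G$, and the strict $(k-1)$-convexity of $\partial\Omega$ must be used precisely to control this convex combination from above.

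The heart of the matter is the following local estimate near $\partial\Omega$: I claim $\operatorname{tr}(GD^2 d)\le c_0<0$ on $\Omega\cap\{d<\rho\}$ for suitable small $\rho$, with $c_0$ depending only on $\Omega$. The danger is that $G$ could concentrate almost all its weight on the single normal eigendirection of $D^2 d$, where the eigenvalue is $O(d)$ rather than negative. To rule this out, I would use the $(k-1)$-convexity of $\partial\Omega$ together with the admissibility of $u$: at a point where $d$ is the coordinate $x_n$-direction, $D^2 d$ is essentially $-\mathrm{diag}(\kappa_1,\dots,\kappa_{n-1},0)$, and one computes $\sum_i \sigma_k^{ii}[D^2u]\,(D^2 d)_{ii}$. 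The standard trick (as in Caffarelli–Nirenberg–Spruck and Ivochkina–Trudinger–Wang) is that for a $k$-admissible Hessian the "normal" curvature coefficient $\sigma_k^{nn}$ is dominated by the tangential ones in a way that, combined with $\sigma_j(\kappa)\ge\delta>0$ for $j\le k-1$, forces $\operatorname{tr}(F^{ij}D^2 d)\le -\delta'\operatorname{tr}F^{ij}$ on the boundary; continuity then propagates this to a neighborhood. Since this is the genuinely substantive point I expect it to be the main obstacle, and it is exactly where "strictly $(k-1)$-convex" enters; I would isolate it as the technical core of the argument.

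Granting that local estimate, the construction is routine. Fix $\rho>0$ so small that $d\in C^{3,1}(\overline{\Omega\cap\{d\le\rho\}})$ and $\operatorname{tr}(GD^2 d)\le c_0<0$ there. Choose a cutoff-type profile: set $\psi:= A\,\phi(d)$ near the boundary, where $\phi$ is a fixed $C^{3,1}$ increasing concave function with $\phi(0)=0$, $\phi'(0)=1$, $\phi''<0$, extended so that $\phi(d)$ is a fixed positive constant for $d\ge\rho$; then glue to a fixed smooth positive bump supported away from $\partial\Omega$ to obtain a global $\psi\in C^{3,1}(\overline{\Omega})$ with $\psi>0$ in $\Omega$, $\psi=0$ on $\partial\Omega$, and $D\psi=\phi'(0)A\,Dd=A\nu$ on $\partial\Omega$ (so $D\psi/|D\psi|=\nu$ and $|D\psi|=A\ge1$ on $\partial\Omega$). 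Since $\operatorname{tr}(GD^2\psi)=A\phi'(d)\operatorname{tr}(GD^2 d)+A\phi''(d)|Dd|^2\le A(\phi'(d)c_0+\phi''(d))$ on $\{d<\rho\}$, both terms are negative there and we make the sum $\le -1$ by taking $A$ large; on $\{d\ge\rho\}$, where $\psi$ is a fixed function independent of $u$, we instead use that $G$ is positive definite of unit trace together with $0\le\operatorname{tr}(GD^2u)\le K$ from \eqref{eqn2.12} — actually it is cleaner here to simply build the fixed bump so that $D^2\psi\le -c_1 \mathbf{I}_n$ on the compact set $\{d\ge\rho/2\}$, whence $\operatorname{tr}(GD^2\psi)\le -c_1\operatorname{tr}G=-c_1$, and again scale by $A$. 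Matching the two regions on the overlap $\{\rho/2<d<\rho\}$ with a partition of unity and enlarging $A$ once more yields all five conditions in \eqref{eqn2.15}. I would remark that $\psi$ depends only on $\Omega$ and $k$ because $c_0$, $c_1$, $\phi$, and $\rho$ do; the solution $u$ enters only through the universally bounded weights $G^{ij}$, never through any quantitative property of $D^2u$.
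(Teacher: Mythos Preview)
Your central claim---that $\operatorname{tr}(GD^{2}d)\le c_{0}<0$ near $\partial\Omega$ with $c_{0}$ depending only on $\Omega$---is false. Take $n=4$, $k=2$, a boundary point with principal curvatures $\kappa=(2,-1,0)$ (so $\sigma_{1}(\kappa)=1>0$, strictly $1$-convex), and in the frame $(e_{1},e_{2},e_{3},\nu)$ let $D^{2}u=\operatorname{diag}(1,-1+\delta,1,1)$. Then $\sigma_{1}=2+\delta$, $\sigma_{2}=3\delta>0$, so $\lambda(D^{2}u)\in\Gamma_{2}$, and $G=\operatorname{diag}(1+\delta,\,3,\,1+\delta,\,1+\delta)/(3(2+\delta))$, whence
\[
\operatorname{tr}(GD^{2}d)=-2G^{11}+G^{22}=\frac{1-2\delta}{3(2+\delta)}\longrightarrow\frac{1}{6}>0\quad\text{as }\delta\to0.
\]
So $\operatorname{tr}(GD^{2}d)$ is not even negative in general. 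Your formula $\operatorname{tr}(GD^{2}\psi)=A\phi'(d)\operatorname{tr}(GD^{2}d)+A\phi''(d)|Dd|^{2}$ is also wrong: since $D^{2}\phi(d)=\phi'(d)D^{2}d+\phi''(d)\,Dd\otimes Dd$, the second term is $A\phi''(d)\,\nu^{T}G\nu$, not $A\phi''(d)|Dd|^{2}$; and $\nu^{T}G\nu=G^{nn}\in(0,1)$ carries no uniform positive lower bound, so you cannot simply absorb a bad first term by making $\phi''$ very negative.

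The paper sidesteps both difficulties with a single stroke: it takes $\psi=-t\psi^{*}$, where $\psi^{*}$ is the Caffarelli--Nirenberg--Spruck function with $\lambda(D^{2}\psi^{*})\in\Gamma_{k}$ throughout $\overline{\Omega}$, and invokes the \emph{concavity} of $F$,
\[
F^{ij}[D^{2}u]\bigl(K(\psi^{*}_{ij}-\varepsilon_{1}\delta_{ij})-u_{ij}\bigr)\ \ge\ F\bigl[K(D^{2}\psi^{*}-\varepsilon_{1}\mathbf{I}_{n})\bigr]-F[D^{2}u]\ >\ 0,
\]
which yields $F^{ij}\psi^{*}_{ij}\ge\varepsilon_{1}\operatorname{tr}F^{ij}$ directly, with no componentwise analysis of $G$. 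Your function $-A\phi(d)$ with $\phi$ concave is in fact the CNS construction near $\partial\Omega$: its Hessian has tangential eigenvalues $A\phi'(0)\kappa_{i}$ and normal eigenvalue $-A\phi''(0)>0$, and since $\sigma_{k}(\kappa,t)=\sigma_{k}(\kappa)+t\,\sigma_{k-1}(\kappa)$ with $\sigma_{k-1}(\kappa)\ge\delta>0$, it lies compactly in $\Gamma_{k}$ once $-\phi''(0)$ is large. So your construction is right in spirit; the missing idea is to replace the (false) pointwise estimate on $\operatorname{tr}(GD^{2}d)$ by the concavity argument, which is exactly what the paper does.
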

\begin{proof}
Let $\psi^{*}$ be the function constructed in Section 2 of Caffarelli-Nirenberg-Spruck \cite{CNS1985}, which satisfies 
\begin{equation*}
\begin{cases}
\lambda\big(D^{2}\psi^{*}(x)\big)\in\Gamma_{k},&\quad\forall x\in\overline{\Omega},\\
\psi^{*}<0&\quad\text{in}\ \Omega,\\
\psi^{*}=0&\quad\text{on}\ \partial\Omega,\\
\frac{D\psi^{*}(x)}{|D\psi^{*}(x)|}=-\nu(x)&\quad\text{on}\ \partial\Omega,\\
\left|D\psi^{*}\right|\geq\varepsilon_{0}&\quad\text{on}\ \partial\Omega,
\end{cases}
\end{equation*}
for some constant $\varepsilon_{0}>0$ depending on $\Omega$.
By the compactness of $\overline{\Omega}$, there exists a small constant $\varepsilon_{1}>0$ such that 
\[
\left\{\lambda\big(D^{2}\psi^{*}(x)\big):\ x\in\overline{\Omega}\right\}-\varepsilon_{1}\lambda(\mathbf{I}_{n})\Subset\Gamma_{k}. 
\]
Using the concavity of $F$, we derive
\begin{align*}
F^{ij}\big[D^{2}u\big]\left(K\big(\psi^{*}_{ij}-\varepsilon_{1}\delta_{ij}\big)-u_{ij}\right)&\geq F\big[K\big(D^{2}\psi^{*}-\varepsilon_{1}\mathbf{I}_{n}\big)\big]-F\big[D^{2}u\big]\\
&=K\sigma_{k}^{\frac{1}{k}}\big[D^{2}\psi^{*}-\varepsilon_{1}\mathbf{I}_{n}\big]-f^{\frac{1}{k}}\\
&>0,
\end{align*}
where $K$ is sufficiently large.
Consequently, by $\eqref{eqn2.12}$, we have 
\[
F^{ij}\big[D^{2}u\big]\psi^{*}_{ij}\geq \varepsilon_{1}\operatorname{tr}F^{ij}\big[D^{2}u\big]+\frac{1}{K}F^{ij}\big[D^{2}u\big]u_{ij}\geq \varepsilon_{1}\operatorname{tr}F^{ij}\big[D^{2}u\big]. 
\]
Thus  
\[
\operatorname{tr}\left(GD^{2}\psi^{*}\right)\geq\varepsilon_{1}.
\]
Choosing $\psi=-t\psi^{*}$ with $t>0$ sufficiently large, we obtain \eqref{eqn2.15}.
\end{proof}

\section{Reduction to the boundary second-order normal estimate}

It follows from Caffarelli-Nirenberg-Spruck \cite{CNS1985} that 
\begin{equation}\label{eqn7.23}
\sup_{\overline{\Omega}}|u|+\sup_{\partial\Omega}|Du|\leq C,
\end{equation}
where $C$ depends on $\Omega$, $\|f\|_{C^{0}(\overline{\Omega})}$, and $\|\varphi\|_{C^{3}(\partial\Omega)}$, but is independent of $\inf_{\Omega_{0}}f$.   

\begin{lemma}\label{thm7.1}
Let $u\in C^{3,1}(\overline{\Omega})$ be a $k$-admissible solution of equation \eqref{eqn1.1}, 
$\partial\Omega\in C^{3,1}$ be strictly $(k-1)$-convex, 
$\varphi\in C^{3,1}(\partial\Omega)$, 
$\inf_{\Omega_{0}}f>0$, and $f^{3/(2k-2)}\in C^{1,1}(\overline{\Omega_{0}})$, where $\Omega\Subset\Omega_{0}$.      
Then  
\[
\sup_{\Omega}|Du|\leq \sup_{\partial\Omega}|Du|+C,
\]
where $C$ depends on $\Omega$, $\operatorname{dist}(\Omega,\partial\Omega_{0})$, and $\|f^{3/(2k-2)}\|_{C^{1,1}(\overline{\Omega_{0}})}$, but is independent of $\inf_{\Omega_{0}}f$. 
\end{lemma}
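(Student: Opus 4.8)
The boundary bound $\sup_{\partial\Omega}|Du|\le C$ is already in hand from \eqref{eqn7.23} (squeeze $u$ between the subsolution $\underline u$ and the harmonic extension $h$ of $\varphi$), so the task is to estimate $\sup_{\Omega}|Du|$ by $\sup_{\partial\Omega}|Du|$ plus a constant of the asserted dependence. The plan is to run a maximum principle on each first-order directional derivative $u_e$, $e\in\mathbb{S}^{n-1}$, using the auxiliary function of Lemma \ref{thm2.4}. Differentiating \eqref{eqn2.5} once in the direction $e$, which is exactly \eqref{eqn2.7}, and dividing by $\operatorname{tr}F^{ij}[D^2u]$ gives, with $G$ the normalized matrix \eqref{eqn2.6},
\[
\operatorname{tr}\!\big(GD^2u_e\big)=\frac{1}{\operatorname{tr}F^{ij}[D^2u]}\,\partial_e\!\big(g^{p/k}\big)=\frac{p}{k\,\operatorname{tr}F^{ij}[D^2u]}\;g^{\frac pk-\frac12}\,\frac{\partial_e g}{\sqrt g},
\]
where $g=f^{3/(2k-2)}$ and $p=(2k-2)/3$, so that $g>0$ in $\Omega_0$ and $g\in C^{1,1}(\overline{\Omega_0})$.

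The decisive step is to bound the right-hand side, in absolute value, by a constant $C_0$ depending only on $\operatorname{dist}(\Omega,\partial\Omega_0)$ and $\|g\|_{C^{1,1}(\overline{\Omega_0})}$, and \emph{not} on $\inf_{\Omega_0}f$. By Lemma \ref{thm2.2}(i), $|\partial_e g|/\sqrt g\le\sqrt K$ on $\overline{\Omega}$. Feeding in the lower bound $\operatorname{tr}F^{ij}[D^2u]\ge 1/C$ of \eqref{eqn2.11} yields $|\operatorname{tr}(GD^2u_e)|\le C\,g^{(k-4)/(6k)}$; since $\tfrac pk-\tfrac12=\tfrac{k-4}{6k}\ge0$ whenever $k\ge4$ — this is precisely where the exponent $p=(2k-2)/3$, equivalently $p/(k-1)=2/3$, is used — the bound $|\operatorname{tr}(GD^2u_e)|\le C\,\|g\|_{C^0(\overline{\Omega_0})}^{(k-4)/(6k)}=:C_0$ follows. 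For $k\in\{2,3\}$ one uses instead the sharper lower bound $\operatorname{tr}F^{ij}[D^2u]\ge\tfrac1C\sigma_1^{1/(k-1)}[D^2u]\,f^{-1/(k(k-1))}$ of \eqref{eqn2.11}: the surviving power of $g$ is then $p/(k-1)-\tfrac12=\tfrac16\ge0$, so $|\operatorname{tr}(GD^2u_e)|\le C\,g^{1/6}\sigma_1^{-1/(k-1)}[D^2u]$ (this is \eqref{eqn2.13}), and the factor $(\Delta u)^{-1/(k-1)}$ must still be absorbed by splitting $\Omega$ into $\{\Delta u\ge1\}$, where it is harmless, and $\{\Delta u<1\}$, where $\Gamma_k\subset\Gamma_2$ forces $|D^2u|\le\Delta u<1$, hence controls $D^2u$ — and so $Du$ — outright. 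This source bound is the one nontrivial point; everything downstream is routine, and $k\in\{2,3\}$ is the only genuinely delicate range.

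Granting $|\operatorname{tr}(GD^2u_e)|\le C_0$ uniformly, let $\psi\in C^{3,1}(\overline{\Omega})$ be the function of Lemma \ref{thm2.4}, so $\operatorname{tr}(GD^2\psi)\le-1$ in $\Omega$, $\psi>0$ in $\Omega$, $\psi=0$ on $\partial\Omega$, and $\|\psi\|_{C^0(\overline{\Omega})}$ depends only on $\Omega$. Then $\operatorname{tr}(GD^2(u_e-C_0\psi))\ge0$ and $\operatorname{tr}(GD^2(u_e+C_0\psi))\le0$ in $\Omega$. Since $\operatorname{tr}G\equiv1$, the operator $\phi\mapsto\operatorname{tr}(GD^2\phi)$ is degenerate elliptic with continuous coefficients and no lower-order terms, so the weak maximum principle applies: $u_e-C_0\psi$ attains its maximum, and $u_e+C_0\psi$ its minimum, on $\partial\Omega$, where $\psi$ vanishes. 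Hence for every $x\in\Omega$,
\[
|u_e(x)|\le\sup_{\partial\Omega}|u_e|+C_0\sup_{\Omega}\psi\le\sup_{\partial\Omega}|Du|+C,
\]
and taking the supremum over $e\in\mathbb{S}^{n-1}$ gives $\sup_{\Omega}|Du|\le\sup_{\partial\Omega}|Du|+C$, with $C$ of the stated dependence because $\psi$ depends only on $\Omega$ and $C_0$ only on $\operatorname{dist}(\Omega,\partial\Omega_0)$ and $\|g\|_{C^{1,1}(\overline{\Omega_0})}$, but not on $\inf_{\Omega_0}f$.
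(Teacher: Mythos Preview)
Your approach is sound for $k\ge4$ but has a genuine gap for $k\in\{2,3\}$, and the gap cannot be closed by ``splitting.'' The maximum-principle argument with $u_e\pm C_0\psi$ requires the differential inequality $|\operatorname{tr}(GD^2u_e)|\le C_0$ to hold \emph{at the interior maximum point} $x_0$ of $u_e-C_0\psi$; it is irrelevant that the inequality holds on $\{\Delta u\ge1\}$ if $x_0$ may lie in $\{\Delta u<1\}$. Your fallback --- that on $\{\Delta u<1\}$ one has $|D^2u|<1$ and hence ``controls $Du$ outright'' --- does not bound $Du$ anywhere: having $|D^2u|<1$ on a subset controls only the \emph{variation} of $Du$ along paths inside that subset, and since $\{\Delta u<1\}$ need not touch $\partial\Omega$, you cannot integrate back to the boundary data. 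Nor can you rule out that $x_0$ lands in $\{\Delta u<1\}$: the first-order condition there reads $D^2u(x_0)\,e=C_0\nabla\psi(x_0)$, and because $\nabla\psi$ must vanish at the interior maximum of $\psi$, there is no uniform lower bound on $|\nabla\psi(x_0)|$, hence none on $\sigma_1(x_0)$.

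The paper fixes this by choosing a barrier whose gradient is bounded away from zero: take $u_e+K|x|^2$, after translating so that $\Omega\subset\mathbb{R}^n\setminus B_1$. At an interior maximum $x_0$ the first-order condition gives $u_{ej}(x_0)=-2Kx_{0,j}$, whence $\sum_j u_{ej}^2(x_0)=4K^2|x_0|^2\ge4K^2$, and $\lambda(D^2u)\in\Gamma_2$ then forces $\sigma_1(x_0)\ge2K$. Combining the second-order condition $F^{ij}(u_{eij}+2K\delta_{ij})\le0$ with the one-derivative identity (which gives $F^{ij}u_{eij}\ge-K_1f^{-1/(k(k-1))}$, cf.\ \eqref{eqn4.27}) and the lower bound $\operatorname{tr}F^{ij}\ge\frac{1}{C}\sigma_1^{1/(k-1)}f^{-1/(k(k-1))}$ of \eqref{eqn2.11} yields $\frac{2K}{C}(2K)^{1/(k-1)}\le K_1$, a contradiction for $K$ large. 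This works uniformly for all $k\ge2$; the point is that $K|x|^2$, unlike $\psi$, has $|\nabla(K|x|^2)|\ge2K$ throughout $\Omega$, so the first-order condition manufactures the needed largeness of $\sigma_1$ at $x_0$. Your barrier $\psi$ is perfectly adequate for $k\ge4$, where $p/k-\tfrac12=(k-4)/(6k)\ge0$ makes the source bounded outright with no $\sigma_1^{-1/(k-1)}$ to absorb, but it is the wrong barrier for $k\le3$.
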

\begin{proof}
Without loss of generality, let $\Omega\subset\mathbb{R}^{n}\setminus B_{1}$.  
Suppose
\[
\sup_{x\in\overline{\Omega},e\in\mathbb{S}^{n-1}}\left(u_{e}+K|x|^{2}\right)
\] 
is attained at an interior point $x_{0}\in\Omega$ in the direction $e=e_{1}$, where $K>0$ is a constant to be determined. 
Then at the point $x_{0}$, 
\[
u_{1j}+2Kx_{j}=0,\quad \forall1\leq j\leq n, 
\]
and 
\begin{equation}\label{eqn7.13}
F^{ij}\big[D^{2}u\big]\left(u_{1ij}+2K\delta_{ij}\right)\leq0.
\end{equation}
Since $\lambda(D^{2}u)\in\Gamma_{2}$, we derive at the point $x_{0}$, 
\begin{equation}\label{eqn7.14}
0\leq \sigma_{1}^{2}\big[D^{2}u\big]-\sum_{j=1}^{n}u_{1j}^{2}\leq\sigma_{1}^{2}\big[D^{2}u\big]-4K^{2}. 
\end{equation}
By \eqref{eqn2.7} and \eqref{eqn7.1}, we obtain  
\begin{equation}\label{eqn4.27}
F^{ij}\big[D^{2}u\big]u_{1ij}=\frac{p}{k}\frac{\partial_{1}g}{\sqrt{g}}f^{\frac{1}{4(k-1)}-\frac{1}{k(k-1)}}\geq-K_{1}f^{\frac{-1}{k(k-1)}} 
\end{equation}
in $\Omega$, where $K_{1}$ depends on $\operatorname{dist}(\Omega,\partial\Omega_{0})$ and  $\|f^{3/(2k-2)}\|_{C^{1,1}(\overline{\Omega_{0}})}$. 

It follows from \eqref{eqn7.13}, \eqref{eqn4.27}, \eqref{eqn2.11}, and \eqref{eqn7.14} that at the point $x_{0}$,
\begin{align*}
0&\geq\frac{2K}{C}\sigma_{1}^{\frac{1}{k-1}}\big[D^{2}u\big]f^{\frac{-1}{k(k-1)}}-K_{1}f^{\frac{-1}{k(k-1)}}\\
&\geq\left(\frac{2K}{C}(2K)^{\frac{1}{k-1}}-K_{1}\right)f^{\frac{-1}{k(k-1)}},
\end{align*}
where $C>0$ is universal. 
Taking $K$ large enough, we get a contradiction. 
The proof of Lemma \ref{thm7.1} is complete.  
\end{proof} 

\begin{lemma}\label{thm7.2}
Let $u\in C^{3,1}(\overline{\Omega})$ be a $k$-admissible solution of equation \eqref{eqn1.1}, 
$\partial\Omega\in C^{3,1}$ be strictly $(k-1)$-convex, 
$\varphi\in C^{3,1}(\partial\Omega)$, 
$\inf_{\Omega_{0}}f>0$, and $f^{3/(2k-2)}\in C^{1,1}(\overline{\Omega_{0}})$, where $\Omega\Subset\Omega_{0}$.  
Then 
\[
\sup_{\partial\Omega}\left(|u_{\nu\eta}|+|u_{\eta\eta}|\right)\leq C, 
\]
where $\eta$ is any unit tangential vector; and $C$ depends on $\Omega$, $\|\varphi\|_{C^{3}(\partial\Omega)}$, $\|u\|_{C^{1}(\overline{\Omega})}$, $\operatorname{dist}(\Omega,\partial\Omega_{0})$, and $\|f^{3/(2k-2)}\|_{C^{1,1}(\overline{\Omega_{0}})}$, but is independent of $\inf_{\Omega_{0}}f$.  
\end{lemma}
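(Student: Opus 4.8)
The plan is a barrier construction at an arbitrary boundary point, in the classical style of Caffarelli--Nirenberg--Spruck. Fix $x_{0}\in\partial\Omega$; after a rigid motion assume $x_{0}=0$, $\nu(0)=e_{n}$, and that near $0$ the boundary is the graph $x_{n}=\rho(x')$ with $\rho(0)=0$, $D\rho(0)=0$, and $\Omega=\{x_{n}>\rho(x')\}$ locally. The pure tangential derivatives are immediate: differentiating the boundary relation $u(x',\rho(x'))=\varphi(x',\rho(x'))$ twice in $x'$ and evaluating at $0$ gives, for $\alpha,\beta<n$,
\[
u_{\alpha\beta}(0)=\varphi_{\alpha\beta}(0)+\big(\varphi_{n}(0)-u_{n}(0)\big)\rho_{\alpha\beta}(0),
\]
so that $|u_{\eta\eta}(0)|\le C(1+|Du(0)|)$ for every unit tangential $\eta$, and the right-hand side is controlled by \eqref{eqn7.23} together with Lemma \ref{thm7.1}. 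It remains to bound $|u_{\ell n}(0)|$ for $\ell<n$.

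For this I would work in $\Omega_{\delta}:=\Omega\cap B_{\delta}(0)$ with $\delta$ small and fixed, and use for each $\ell<n$ the vector field
\[
T_{\ell}:=\partial_{\ell}+\sum_{\gamma<n}\rho_{\ell\gamma}(0)\,R_{\gamma n},\qquad R_{\gamma n}:=x_{\gamma}\partial_{n}-x_{n}\partial_{\gamma},
\]
a combination of the constant field $\partial_{\ell}$ and the rotational (Killing) fields $R_{\gamma n}$, chosen so that $T_{\ell}$ agrees with the exact tangent field $\partial_{\ell}+\rho_{\ell}(x')\partial_{n}$ to second order at $0$. After extending $\varphi$ to a $C^{3,1}$ neighbourhood of $\partial\Omega$, this yields $T_{\ell}(u-\varphi)(0)=0$, $\partial_{n}\big(T_{\ell}(u-\varphi)\big)(0)=u_{\ell n}(0)-\varphi_{\ell n}(0)$, and $|T_{\ell}(u-\varphi)|\le C|x|^{2}$ on $\partial\Omega\cap B_{\delta}$. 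The key point is that $\mathcal{L}:=F^{ij}[D^{2}u]\partial_{ij}$ acts cleanly on $T_{\ell}u$: differentiating \eqref{eqn2.5} once gives $\mathcal{L}(\partial_{\ell}u)=F^{ij}u_{\ell ij}=\partial_{\ell}(g^{p/k})$ with no commutator term, while for the rotation fields the commutator contributions appearing in $\mathcal{L}(R_{\gamma n}u)$ collapse to $F^{\gamma j}u_{nj}-F^{nj}u_{\gamma j}$, which vanishes because $F^{ij}[D^{2}u]$ and $D^{2}u$ are simultaneously diagonalisable and hence commute; thus $\mathcal{L}(R_{\gamma n}u)=R_{\gamma n}(g^{p/k})$. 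No uncontrolled second derivatives of $u$ survive, and rewriting $\partial_{e}(g^{p/k})=\tfrac{p}{k}\,g^{p/(k-1)-1/2}\,(\partial_{e}g/\sqrt{g})\,f^{-1/(k(k-1))}$ with $p/(k-1)-1/2=1/6$, using the boundedness of $g^{1/6}$ and \eqref{eqn7.1}, we obtain
\[
\big|\mathcal{L}\big(T_{\ell}(u-\varphi)\big)\big|\le C(1+|x|)\,f^{-\frac{1}{k(k-1)}}+C\,\|\varphi\|_{C^{3}}\operatorname{tr}F^{ij}\le C\,f^{-\frac{1}{k(k-1)}}+C\operatorname{tr}F^{ij}.
\]

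Then I would take the barrier $w:=A\psi+B|x|^{2}$ with $\psi$ the function of Lemma \ref{thm2.4}, so that $\mathcal{L}w\le(2B-A)\operatorname{tr}F^{ij}$. Choosing $B$ large, depending on $\delta$ and $\|u-\varphi\|_{C^{1}(\overline{\Omega})}$, makes $w\ge|T_{\ell}(u-\varphi)|$ on $\partial\Omega_{\delta}$ (on the flat part $w\ge B|x|^{2}\ge|T_{\ell}(u-\varphi)|$ by the $C|x|^{2}$ bound above, and on the spherical part $w\ge B\delta^{2}$). Since $\operatorname{tr}F^{ij}\ge C^{-1}\max\{1,\sigma_{1}^{1/(k-1)}f^{-1/(k(k-1))}\}$ by \eqref{eqn2.11}, one absorbs $C f^{-1/(k(k-1))}+C\operatorname{tr}F^{ij}$ into $A\operatorname{tr}F^{ij}$ by taking $A$ large; then $\mathcal{L}\big(w\pm T_{\ell}(u-\varphi)\big)\le0$ in $\Omega_{\delta}$ and $w\pm T_{\ell}(u-\varphi)\ge0$ on $\partial\Omega_{\delta}$, so by the weak minimum principle for the elliptic operator $\mathcal{L}$ we get $w\pm T_{\ell}(u-\varphi)\ge0$ in $\Omega_{\delta}$. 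As this nonnegative function vanishes at $0\in\partial\Omega_{\delta}$, its interior normal derivative there is nonnegative, i.e. $|u_{\ell n}(0)-\varphi_{\ell n}(0)|=|\partial_{n}(T_{\ell}(u-\varphi))(0)|\le\partial_{n}w(0)=A|D\psi(0)|\le C$, hence $|u_{\ell n}(0)|\le C$. Since $x_{0}$ and $\ell$ are arbitrary and all constants are uniform over $\partial\Omega$ (by compactness and $\partial\Omega\in C^{3,1}$), combining with the tangential estimate finishes the proof.

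I expect the main obstacle to lie in the control of $\mathcal{L}\big(T_{\ell}(u-\varphi)\big)$: excluding uncontrolled $D^{2}u$ terms forces precisely the Killing-field choice of frame and relies on the commutativity of $F^{ij}[D^{2}u]$ with $D^{2}u$, and thereafter the degenerate right-hand side $\sim f^{-1/(k(k-1))}$ must be dominated by the barrier. The comparison against $\operatorname{tr}F^{ij}\gtrsim\sigma_{1}^{1/(k-1)}f^{-1/(k(k-1))}$ is clean when $\sigma_{1}=\Delta u$ is bounded below; when $k\ge4$ one may instead use directly that $g^{p/(k-1)-1/2}=g^{1/6}$ combines with the power of $f$ into the bounded quantity $g^{(k-4)/(6k)}$, so that $|\mathcal{L}(T_{\ell}(u-\varphi))|\le C\operatorname{tr}F^{ij}$ with no smallness condition. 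For small $k$ the region where $\Delta u$ is small — where $|D^{2}u|\le\Delta u$ is itself small — has to be isolated and treated separately, and making this splitting rigorous is the delicate technical step of the argument.
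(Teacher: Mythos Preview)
Your overall scheme---the CNS rotational operator $T_\ell$, the bound $|\mathcal{L}(T_\ell(u-\varphi))|\le Cf^{-1/(k(k-1))}+C\operatorname{tr}F^{ij}$, and a barrier comparison near a boundary point---is exactly the paper's approach, and your tangential estimate and boundary control of $T_\ell(u-\varphi)$ are correct. The genuine gap is the absorption step you flag yourself at the end: inequality \eqref{eqn2.11} gives $\operatorname{tr}F^{ij}\ge C^{-1}\sigma_1^{1/(k-1)}f^{-1/(k(k-1))}$, so $Cf^{-1/(k(k-1))}$ can be dominated by $A\operatorname{tr}F^{ij}$ only where $\sigma_1[D^2u]$ is bounded below. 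No such bound is available a priori, and a domain ``splitting'' would not produce a differential inequality valid throughout the comparison region; so as written the argument does not close for $k=2,3$ (your $g^{(k-4)/(6k)}$ observation does handle $k\ge4$).

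The paper resolves this not by splitting but by a contradiction argument at the interior maximum. With the barrier $v=(x_n-\rho)^2-\beta_1(x_n-\rho)-\beta_2|x'|^2$ in a thin slab $\omega_{r_1}$, one has $|v_n|\ge\beta_1/2$ there. If $\pm T_\alpha(u-\varphi)+Kv$ had an interior maximum at $x_0$, the \emph{first-order} condition $\partial_n(\pm T_\alpha(u-\varphi)+Kv)(x_0)=0$ forces $|(T_\alpha u)_n(x_0)|\gtrsim K\beta_1$, hence $|D^2u(x_0)|\ge1$ and therefore $\sigma_1[D^2u(x_0)]\ge1$. This on-the-spot lower bound for $\sigma_1$ is precisely what lets one absorb $f^{-1/(k(k-1))}$ into $\operatorname{tr}F^{ij}$ in the \emph{second-order} condition at $x_0$, yielding the contradiction. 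Your barrier $w=A\psi+B|x|^2$ could be adapted to the same trick (since $|D\psi|\ge1$ on $\partial\Omega$), but you must replace the global inequality $\mathcal{L}(w\pm T_\ell(u-\varphi))\le0$ by this first-order/second-order contradiction at the putative interior extremum.
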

\begin{proof}
Let the origin be a boundary point satisfying $e_{n}=\nu(0)$, and $\partial\Omega$ be represented by
\begin{equation}\label{eqn7.15}
x_{n}=\rho\big(x'\big)
\end{equation}
on $\partial\Omega\cap\{|x'|\leq r_{0}\}$ for some $C^{3,1}$ function $\rho$ and constant $r_{0}>0$.
It is clear that $\nabla_{x'}\rho(0)=0$. 
Let $\varphi(0)=0$ and $\nabla\varphi(0)=0$ by subtracting an affine function. 
Differentiating $u(x',\rho(x'))=\varphi(x',\rho(x'))$ on $\partial\Omega$ twice, one gets 
\[
u_{ij}(0)=\varphi_{ij}(0)-u_{n}(0)\rho_{ij}(0),\quad \forall1\leq i,\ j\leq n-1.
\] 
Thus the second-order tangential estimate is obtained.

For any $x\in\partial\Omega\cap\{|x'|\leq r_{0}\}$, define an operator 
\[
T_{\alpha}:=\partial_{\alpha}+\sum_{\beta=1}^{n-1}\rho_{\alpha\beta}(0)(x_{\beta}\partial_{n}-x_{n}\partial_{\beta}),\quad \forall1\leq\alpha\leq n-1. 
\]  
It is standard that 
\begin{equation}\label{eqn7.17}
T_{\alpha}(u-\varphi)=O\Big(\big|x'\big|^{2}\Big),\quad \forall1\leq\alpha\leq n-1, 
\end{equation}
on $\partial\Omega\cap\{|x'|\leq r_{0}\}$.
It follows from \eqref{eqn2.7} and \eqref{eqn7.1} that 
\begin{align}\label{eqn7.21}
\left|F^{ij}[D^{2}u]\left(T_{\alpha}(u-\varphi)\right)_{ij}\right|&\leq\left|T_{\alpha}(g^{p})\right|+C_{1}\operatorname{tr}F^{ij}\big[D^{2}u\big]\nonumber\\
&\leq C_{2}f^{\frac{-1}{k(k-1)}}+C_{1}\operatorname{tr}F^{ij}\big[D^{2}u\big]
\end{align}
in $\Omega$, where $C_{1}$ depends on $\|\rho\|_{C^{3,1}}$, $\|\varphi\|_{C^{3}(\partial\Omega)}$; $C_{2}$ depends on $\|\rho\|_{C^{3,1}}$,  $\operatorname{dist}(\Omega,\partial\Omega_{0})$, and  $\|f^{3/(2k-2)}\|_{C^{1,1}(\overline{\Omega_{0}})}$.

Define  
\[
\omega_{r}:=\left\{x\in\Omega:\ \rho\big(x'\big)<x_{n}<\rho\big(x'\big)+r^{4},\ \big|x'\big|<r\right\}
\]
for all $r\leq r_{0}$.
Define a function 
\[
v:=\big(x_{n}-\rho\big(x'\big)\big)^{2}-\beta_{1}\big(x_{n}-\rho\big(x'\big)\big)-\beta_{2}\big|x'\big|^{2}\quad \text{in}\ \omega_{r_{0}}, 
\]
where positive constants $\beta_{1}$ and $\beta_{2}$ are to be determined.  
It holds that 
\[
D^{2}v=D^{2}\big[x_{n}^{2}+\beta_{1}\rho\big(x'\big)\big]+D^{2}\big[\rho^{2}\big(x'\big)-2x_{n}\rho\big(x'\big)\big]-\beta_{2}D^{2}\Big[\big|x'\big|^{2}\Big]. 
\]
Since $\partial\Omega\in C^{3,1}$ is strictly $(k-1)$-convex, one can fix $\beta_{1}$  sufficiently small such that $\lambda(D^{2}[x_{n}^{2}+\beta_{1}\rho](0))$ belongs to a compact subset of $\Gamma_{k}$ independent of $0\in\partial\Omega$.  
By 
\[
|x|\rightarrow0,\ \rho\big(x'\big)\rightarrow0,\ \nabla_{x'}\rho\big(x'\big)\rightarrow0,\ D^{2}_{x'x'}\rho\big(x'\big)\rightarrow D^{2}_{x'x'}\rho(0)\quad \text{as}\ r\rightarrow0,
\]
there exist small positive constants $r_{1}\leq\min\{r_{0},\beta_{1}\}$, $\beta_{2}$, and $\delta_{1}$ such that 
\[
\left\{\lambda\big(D^{2}v-\delta_{1}\mathbf{I}_{n}\big):\ x\in\omega_{r_{1}}\right\}\ \ \text{is a perturbation of}\ \  \lambda\big(D^{2}\big[x_{n}^{2}+\beta_{1}\rho\big](0)\big),
\]
and is thus contained in a compact subset of $\Gamma_{k}$.
We emphasize that $\beta_{1}$, $r_{1}$, $\beta_{2}$, $\delta_{1}$ all depend only on $\Omega$.  
Using the concavity of $F$,   
\begin{align*}
F^{ij}\big[D^{2}u\big]\left(C_{3}(v_{ij}-\delta_{1}\delta_{ij})-u_{ij}\right)\geq C_{3}\sigma_{k}^{\frac{1}{k}}\big[D^{2}v-\delta_{1}\mathbf{I}_{n}\big]-f^{\frac{1}{k}}>0
\end{align*}
in $\omega_{r_{1}}$, where $C_{3}$ is sufficiently large. 
Then by virtue of \eqref{eqn2.12} and \eqref{eqn2.11}, we derive 
\begin{equation}\label{eqn7.22}
F^{ij}\big[D^{2}u\big]v_{ij}\geq \delta_{1}\operatorname{tr}F^{ij}\big[D^{2}u\big]\geq\frac{\delta_{1}}{C}\sigma_{1}^{\frac{1}{k-1}}\big[D^{2}u\big]f^{\frac{-1}{k(k-1)}}\quad \text{in}\ \omega_{r_{1}},
\end{equation}
where $C>0$ is universal.  

We claim that 
\begin{equation}\label{eqn7.16}
\pm T_{\alpha}(u-\varphi)\leq -Kv\quad \text{in}\ \overline{\omega_{r_{1}}},\quad \forall1\leq\alpha\leq n-1,
\end{equation}
where the constant $K>0$ is to be determined. 

\par
\vspace{2mm}
\textbf{Step 1:} \emph{verification of \eqref{eqn7.16} on $\partial\omega_{r_{1}}$.}
The boundary $\partial\omega_{r_{1}}$ consists of three parts: $\partial_{1}\omega_{r_{1}}\cup\partial_{2}\omega_{r_{1}}\cup\partial_{3}\omega_{r_{1}}$, where $\partial_{1}\omega_{r_{1}}$ and $\partial_{2}\omega_{r_{1}}$ are respectively the graph parts of $\rho$ and $\rho+r_{1}^{4}$, and $\partial_{3}\omega_{r_{1}}$ is the boundary part on $\{|x'|=r_{1}\}$. 
On $\partial_{1}\omega_{r_{1}}$, using \eqref{eqn7.17} one gets 
\[
-Kv=K\beta_{2}\big|x'\big|^{2}\geq \pm T_{\alpha}(u-\varphi)
\]
by taking $K$ large, which depends on $\Omega$, $\|\varphi\|_{C^{1}(\partial\Omega)}$, and $\|u\|_{C^{1}(\overline{\Omega})}$.
It is clear that 
\[
\pm T_{\alpha}(u-\varphi)\leq C_{4},
\]
where $C_{4}$ depends on $\|\rho\|_{C^{2}}$,  $\|\varphi\|_{C^{1}(\partial\Omega)}$, and $\|u\|_{C^{1}(\overline{\Omega})}$. 
It holds that
\[
-Kv\geq K\left(\beta_{1}r_{1}^{4}-r_{1}^{8}\right)\geq\frac{\beta_{1}}{2}Kr_{1}^{4}\quad \text{on}\ \partial_{2}\omega_{r_{1}}, 
\]
and
\[
-Kv\geq K\beta_{2}r_{1}^{4}\quad \text{on}\ \partial_{3}\omega_{r_{1}}.
\]
Then we obtain $-Kv\geq \pm T_{\alpha}(u-\varphi)$ on $\partial_{2}\omega_{r_{1}}\cup\partial_{3}\omega_{r_{1}}$ by taking $K$ large, which depends on $\Omega$, $\|\varphi\|_{C^{1}(\partial\Omega)}$, and $\|u\|_{C^{1}(\overline{\Omega})}$. 

\par
\vspace{2mm}
\textbf{Step 2.} 
Suppose, for contradiction, that \eqref{eqn7.16} does not hold. 
Then the maximum  
\[
\max_{\overline{\omega_{r_{1}}}}\left(\pm T_{\alpha}(u-\varphi)+Kv\right)
\]
is attained at an interior point $x_{0}\in\omega_{r_{1}}$. 
At the point $x_{0}$, we have 
\begin{equation}\label{eqn7.18}
0=\left(\pm T_{\alpha}(u-\varphi)+Kv\right)_{n}=\left(\pm T_{\alpha}u\right)_{n}-\left[-Kv_{n}+\left(\pm T_{\alpha}\varphi\right)_{n}\right]
\end{equation}
and
\begin{equation}\label{eqn7.19}
F^{ij}\big[D^{2}u\big]\left(\pm T_{\alpha}(u-\varphi)+Kv\right)_{ij}\leq0.
\end{equation}
It follows from \eqref{eqn7.18} that 
\begin{align*}
\left|\left(T_{\alpha}u\right)_{n}\right|\geq-Kv_{n}-\left|\left(T_{\alpha}\varphi\right)_{n}\right|&=K\left(\beta_{1}-2x_{n}+2\rho\big(x'\big)\right)-\left|\left(T_{\alpha}\varphi\right)_{n}\right|\\
&\geq \frac{\beta_{1}}{2}K-\left|\left(T_{\alpha}\varphi\right)_{n}\right|
\end{align*}
at the point $x_{0}$. 
By taking $K$ large, which depends on $\Omega$ and  $\|\varphi\|_{C^{2}(\partial\Omega)}$, we get 
\[
\big|D^{2}u(x_{0})\big|\geq1, 
\]
which together with $\lambda(D^{2}u)\in\Gamma_{2}$ yields \
\begin{equation}\label{eqn7.20}
\sigma_{1}\big[D^{2}u(x_{0})\big]\geq\big|D^{2}u(x_{0})\big|\geq1.  
\end{equation}
Using \eqref{eqn7.19}, \eqref{eqn7.21}, \eqref{eqn7.22}, and \eqref{eqn7.20}, we obtain at the point $x_{0}$, 
\[
-\left(C_{2}f^{\frac{-1}{k(k-1)}}+C_{1}\operatorname{tr}F^{ij}\big[D^{2}u\big]\right)+K\left(\frac{\delta_{1}}{2}\operatorname{tr}F^{ij}\big[D^{2}u\big]+\frac{\delta_{1}}{2C}f^{\frac{-1}{k(k-1)}}\right)\leq0, 
\]
which leads to a contradiction by taking $K$ large, which depends on $\Omega$, $\|\varphi\|_{C^{3}(\partial\Omega)}$, $\operatorname{dist}(\Omega,\partial\Omega_{0})$, and  $\|f^{3/(2k-2)}\|_{C^{1,1}(\overline{\Omega_{0}})}$.

Therefore, claim \eqref{eqn7.16} holds with $K$ depending on $\Omega$, $\|\varphi\|_{C^{3}(\partial\Omega)}$, $\|u\|_{C^{1}(\overline{\Omega})}$,  $\operatorname{dist}(\Omega,\partial\Omega_{0})$, and  $\|f^{3/(2k-2)}\|_{C^{1,1}(\overline{\Omega_{0}})}$.
It follows immediately that 
\[
\pm\left(T_{\alpha}(u-\varphi)\right)_{n}(0)\leq -Kv_{n}(0)=K\beta_{1}
\]
for all $1\leq\alpha\leq n-1$.
The proof of Lemma \ref{thm7.2} is complete. 
\end{proof}
\par
\vspace{2mm}

Inspired by Guan \cite{Guan1997}, Guan-Trudinger-Wang \cite{Guan1999}, and Jiao-Wang \cite{Jiao2024}, we reduce the estimate of second-order derivatives to the boundary. 

\begin{lemma}\label{thm1.2}
Let $u\in C^{3,1}(\overline{\Omega})$ be a $k$-admissible solution of equation \eqref{eqn1.1},  
$\inf_{\Omega_{0}}f>0$, and $f^{3/(2k-2)}\in C^{2,1}(\overline{\Omega_{0}})$, where $\Omega\Subset\Omega_{0}$.  
Then 
\[
\sup_{\Omega}\left|D^{2}u\right|\leq\sup_{\partial\Omega}\left|D^{2}u\right|+C,
\]
where $C$ depends on $\Omega$, $\operatorname{dist}(\Omega,\partial\Omega_{0})$, and $\|f^{3/(2k-2)}\|_{C^{2,1}(\overline{\Omega_{0}})}$, but is independent of $\inf_{\Omega_{0}}f$.  
\end{lemma}
\begin{proof}
Define a function 
\[
v:=\Delta u+|x|^{2}.  
\]
Suppose $v$ attains its maximum at an interior point $x_{0}\in\Omega$, for otherwise we are done. 
Then at the point $x_{0}$, we have 
\begin{equation}\label{eqn6.14}
\partial_{x_{i}}(\Delta u)+2x_{i}=0,\quad \forall1\leq i\leq n,
\end{equation}
and  
\[
0\geq\sigma_{k}^{ij}\big[D^{2}u\big]v_{ij},
\]
which together with \eqref{eqn2.3} and \eqref{eqn2.2} implies 
\begin{equation}\label{eqn6.15}
0\geq\sigma_{k}^{ij}\big[D^{2}u\big](\Delta u)_{ij}+C\sigma_{1}^{\frac{1}{k-1}}\big[D^{2}u\big]\sigma_{k}^{\frac{k-2}{k-1}}\big[D^{2}u\big], 
\end{equation}
where $C>0$ is universal. 
Differentiating equation \eqref{eqn1.1}, we derive 
\[
\sigma_{k}^{ij}\big[D^{2}u\big](\Delta u)_{ij}+\sum_{l=1}^{n}\sigma_{k}^{ij,st}\big[D^{2}u\big]u_{ijl}u_{stl}=\Delta (g^{p}).
\]
Combining this equality with \eqref{eqn6.15}, we obtain at the point $x_{0}$, 
\begin{align}\label{eqn6.13}
0&\geq C(\Delta u)^{\frac{1}{k-1}}f^{1-\frac{1}{k-1}}+pg^{p-1}\sum_{l=1}^{n}\left[(p-1)\frac{|\partial_{x_{l}}g|^{2}}{g}+\partial_{x_{l}x_{l}}g\right]\\
&\hspace{4.5mm}-\sum_{l=1}^{n}\sigma_{k}^{ij,st}\big[D^{2}u\big]u_{ijl}u_{stl}.\nonumber 
\end{align}
It follows from Lemma 3.2 of Guan-Li-Li \cite{Guan2012} that computing 
\[
0\geq\left(\left(\frac{\sigma_{k}}{\sigma_{1}}\right)^{1/(k-1)}\right)^{ij,st}u_{ije}u_{ste} 
\]
yields 
\begin{align*}
\frac{\sigma_{k}^{ij,st}u_{ije}u_{ste}}{\sigma_{k}}&\leq \frac{k-2}{k-1}\left(\frac{\sigma_{k}^{ij}u_{ije}}{\sigma_{k}}\right)^{2}\\
&\hspace{4.5mm}+\frac{2}{k-1}\frac{\sigma_{k}^{ij}u_{ije}}{\sigma_{k}}\frac{\sigma_{1}^{ij}u_{ije}}{\sigma_{1}}-\frac{k}{k-1}\left(\frac{\sigma_{1}^{ij}u_{ije}}{\sigma_{1}}\right)^{2}
\end{align*}
for all $e\in\mathbb{S}^{n-1}$, where operators take values at arbitrary matrix $A$ with $\lambda(A)\in\Gamma_{k}$.
Thus by \eqref{eqn6.14}, we infer at the point $x_{0}$, 
\begin{align}
-\sum_{l=1}^{n}\sigma_{k}^{ij,st}\big[D^{2}u\big]u_{ijl}u_{stl}&\geq -\sum_{l=1}^{n}\frac{k-2}{k-1}\frac{p^{2}g^{p-1}|\partial_{x_{l}}g|^{2}}{g} \label{eqn6.12}\\
&\hspace{4.5mm}-\sum_{l=1}^{n}\left[\frac{2}{k-1}\frac{pg^{p-1}\partial_{x_{l}}g2x_{l}}{\Delta u}-\frac{kf}{k-1}\left(\frac{2x_{l}}{\Delta u}\right)^{2}\right]\nonumber. 
\end{align}
Using \eqref{eqn6.13}, \eqref{eqn6.12}, \eqref{eqn7.2}, and \eqref{eqn7.1}, we get at the point $x_{0}$,  
\begin{align*}
0&\geq C(\Delta u)^{\frac{1}{k-1}}f^{1-\frac{1}{k-1}}+pg^{p-1}\sum_{l=1}^{n}\left[-\frac{1}{3}\frac{|\partial_{x_{l}}g|^{2}}{g}+\partial_{x_{l}x_{l}}g\right]\\
&\hspace{4.5mm}-\sum_{l=1}^{n}\left[\frac{2}{k-1}\frac{pg^{p-1}\partial_{x_{l}}g2x_{l}}{\Delta u}-\frac{kf}{k-1}\left(\frac{2x_{l}}{\Delta u}\right)^{2}\right]\\
&\geq C(\Delta u)^{\frac{1}{k-1}}f^{1-\frac{1}{k-1}}-K_{1}f^{1-\frac{1}{k-1}}\\
&\hspace{4.5mm}-f^{1-\frac{1}{k-1}}\sum_{l=1}^{n}\left[\frac{2}{k-1}\frac{K_{2}f^{\frac{1}{4(k-1)}}2|x_{l}|}{\Delta u}-\frac{kf^{\frac{1}{k-1}}}{k-1}\left(\frac{2x_{l}}{\Delta u}\right)^{2}\right], 
\end{align*}
where $K_{1}$ and $K_{2}$ depend on $\operatorname{dist}(\Omega,\partial\Omega_{0})$ and $\|f^{3/(2k-2)}\|_{C^{2,1}(\overline{\Omega_{0}})}$. 
Therefore, 
\[
\Delta u(x_{0})\leq K, 
\]
where $K$ depends on $\Omega$, $\operatorname{dist}(\Omega,\partial\Omega_{0})$, and $\|f^{3/(2k-2)}\|_{C^{2,1}(\overline{\Omega_{0}})}$.  
The proof of Lemma \ref{thm1.2} is complete. 
\end{proof} 
\par
\vspace{2mm}

Combining \eqref{eqn7.23}, Lemmas \ref{thm7.1}, \ref{thm7.2} and \ref{thm1.2}, we reduce the $C^{2}$ estimate to the boundary second-order normal derivative estimate under the condition $f^{3/(2k-2)}\in C^{2,1}(\overline{\Omega_{0}})$ with $f\geq0$ in $\Omega_{0}$. 
Under the homogeneous boundary condition, Dong \cite{Dong2006} established the required boundary second-order normal derivative estimate.
Thus the $C^{2}$ estimate independent of $\inf_{\Omega_{0}}f$ is obtained.
Then the existence is established by an approximation argument.

\begin{theorem}\label{thm1.9}
Let $\partial\Omega\in C^{3}$ be strictly $(k-1)$-convex, 
$\varphi=0$, $f^{3/(2k-2)}\in C^{2,1}(\overline{\Omega_{0}})$, and $f\geq0$ in $\Omega_{0}$, where $\Omega\Subset\Omega_{0}$.     
Then there exists a unique $k$-admissible solution in $C^{1,1}(\overline{\Omega})$ for \eqref{eqn1.1}.
\end{theorem}

We emphasize here that results corresponding to Lemmas \ref{thm7.1}, \ref{thm7.2}, \ref{thm1.2}, and Theorem \ref{thm1.9} were established by Jiao-Wang \cite{Jiao2024} under the condition $f^{1/(k-1)}\in C^{1,1}(\overline{\Omega})$ with $f\geq0$ in $\Omega$.

\section{Monge-Amp{\`e}re equation}

In this section, we assume 
\[
f^{\frac{3}{2n-2}}\in C^{2,1}(\overline{\Omega}),\quad f\geq0\ \text{in}\ \Omega,
\]
and apply the method of Guan-Trudinger-Wang \cite{Guan1999} to establish the boundary second-order normal derivative estimate.
We emphasize here that for the Monge-Amp{\`e}re equation, Lemmas \ref{thm7.1} and \ref{thm7.2} hold under the weaker condition $\inf_{\Omega}f>0$ and $f^{3/(2n-2)}\in C^{1,1}(\overline{\Omega})$. 
Indeed, Lemma \ref{thm7.1} follows directly from the convexity of the admissible solution, and Lemma \ref{thm7.2} can be adapted by differentiating with respect to a tangential vector filed, with the same argument as in the proof of later Lemma \ref{thm6.4}.  

Let the origin be a boundary point satisfying $\nu(0)=e_{n}$. 
By an affine transformation of $x'$, let $\partial\Omega$ be represented by
\begin{equation}\label{eqn6.10}
x_{n}=\rho\big(x'\big)=\frac{1}{2}\big|x'\big|^{2}+\text{cubic of $x'$}+O\Big(\big|x'\big|^{4}\Big)
\end{equation}
on $\partial\Omega\cap\{|x'|\leq r_{0}\}$ for some $C^{3,1}$ function $\rho$ and constant $r_{0}>0$. 
Then rotate $x'$ such that $D^{2}_{x'x'}u(0)$ is a diagonal matrix with 
\[
0<u_{11}(0)\leq u_{22}(0)\leq\dots\leq u_{n-1,n-1}(0).
\] 
By subtracting an affine function, let  
\[
u(0)=0\quad\text{and} \quad\nabla u(0)=0,
\]
which yields by the convexity of $u$, 
\[
u(x)\geq0,\quad\forall x\in\overline{\Omega}. 
\]
Differentiating the boundary condition $u(x',\rho(x'))=\varphi(x',\rho(x'))$ yields 
\[
D^{2}_{x'x'}u(0)=D^{2}_{x'x'}\varphi(0)+D^{2}_{x'x'}\rho(0)\partial_{n}\varphi(0). 
\]
For convenience, denote
\[
b_{i}:=u_{ii}(0),\quad \forall 1\leq i\leq n-1. 
\]
By Taylor expansion with respect to $x'$ at the origin, we have 
\begin{equation}\label{eqn9.2}
\varphi\big(x',\rho\big(x'\big)\big)=\frac{1}{2}\sum_{i=1}^{n-1}b_{i}x_{i}^{2}+\text{cubic of $x'$}+O\Big(\big|x'\big|^{4}\Big)
\end{equation}
on $\partial\Omega\cap\{|x'|\leq r_{0}\}$. 

The following lemma is the inequality (3.5) in Guan-Trudinger-Wang \cite{Guan1999}. 

\begin{lemma}[\cite{Guan1999}]\label{thm6.2}
Let $h\in C^{3,1}(\mathbb{R})$ be a nonnegative function on $\mathbb{R}$ satisfying  
\[
h(t)=\alpha t^{2}+\beta t^{3}+R(t),\quad \forall t\in\mathbb{R}, 
\]
where $\alpha\geq0$, $\beta\in\mathbb{R}$, and the remainder term satisfies  $|R(t)|\leq\gamma|t|^{4}$ for some $\gamma>0$. 
Then
\[
|\beta|\leq(1+\gamma)\sqrt{\alpha}. 
\]
\end{lemma}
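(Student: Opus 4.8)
The plan is to extract the desired inequality from the nonnegativity of $h$ at a single, well-chosen point. Assume $\beta\neq 0$ (otherwise $|\beta|=0\le(1+\gamma)\sqrt{\alpha}$ trivially), and set $\epsilon:=\operatorname{sgn}\beta\in\{-1,1\}$. For a parameter $s>0$ I would evaluate the decomposition $h(t)=\alpha t^2+\beta t^3+R(t)$ at $t=-\epsilon s$. Since $t^2=s^2$, $t^3=-\epsilon^3 s^3=-\epsilon s^3$, and hence $\beta t^3=-\epsilon\beta s^3=-|\beta|s^3$, while $|R(-\epsilon s)|\le\gamma s^4$, the inequality $h(-\epsilon s)\ge 0$ becomes
\[
0\le \alpha s^2-|\beta|s^3+\gamma s^4 .
\]
Dividing through by $s^3>0$ yields the family of bounds $|\beta|\le \dfrac{\alpha}{s}+\gamma s$, valid for every $s>0$.

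The second step is to optimize in $s$. If $\alpha>0$, taking $s=\sqrt{\alpha}$ gives $|\beta|\le\sqrt{\alpha}+\gamma\sqrt{\alpha}=(1+\gamma)\sqrt{\alpha}$, which is exactly the claim. If $\alpha=0$, then the bound reads $|\beta|\le\gamma s$ for all $s>0$; letting $s\to 0^+$ forces $\beta=0$, and again $|\beta|=0=(1+\gamma)\sqrt{\alpha}$.

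I do not expect a genuine obstacle here: the only points needing care are the sign bookkeeping in the substitution $t=-\epsilon s$ (so that the cubic term is certified to be $-|\beta|s^3$ and thus works against us in the right direction) and the separate treatment of the degenerate case $\alpha=0$. The $C^{3,1}$ hypothesis on $h$ plays no role beyond guaranteeing the stated decomposition with a quartic-controlled remainder, which is already part of the assumptions.
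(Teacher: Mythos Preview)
Your proof is correct and is essentially the same as the paper's: both arguments exploit the nonnegativity of $h$ at the point $t=\pm\sqrt{\alpha}$ (the paper simply states ``combine $h(\sqrt{\alpha})\geq0$ and $h(-\sqrt{\alpha})\geq0$''), and your sign choice $t=-\operatorname{sgn}(\beta)\,s$ with $s=\sqrt{\alpha}$ just selects the relevant one of the two inequalities in advance. Your explicit treatment of the degenerate case $\alpha=0$ via $s\to 0^{+}$ is a welcome addition that the paper leaves implicit.
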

\begin{proof}
Lemma \ref{thm6.2} is obtained by combining two facts $h(\sqrt{\alpha})\geq0$ and $h(-\sqrt{\alpha})\geq0$. 
\end{proof}

\subsection{Lower bounds for boundary second-order tangential derivatives}

\begin{lemma}\label{thm6.3}
There exists a constant $\gamma_{0}>0$ such that 
\[
\displaystyle\prod_{i=1}^{n-1}b_{i}\geq \gamma_{0}f(0),
\]
where $\gamma_{0}$ depends on $\Omega$, $\|\varphi\|_{C^{3,1}(\partial\Omega)}$, and $\|f^{3/(2n-2)}\|_{C^{2,1}(\overline{\Omega})}$, but is independent of $\inf_{\Omega}f$.  
\end{lemma}
\begin{proof}
\textbf{Step 1.} 
Define 
\[
M:=\frac{1}{b_{1}}. 
\]
Make a dilation $x\rightarrow y=T(x)$ defined by 
\begin{equation}\label{eqn9.1}
y_{i}=M_{i}x_{i}\quad \text{for}\ i=1,2,\dots,n, 
\end{equation}
where 
\[
M_{i}:=\sqrt{b_{i}}M\ \big(\geq\sqrt{M}\big)\ \ \text{for}\ i=1,2,\dots,n-1,\quad \text{and}\quad M_{n}:=M.  
\]
Without loss of generality, we assume $M\gg1$, which depends on the $C^{3,1}$-norm of $\partial\Omega$. 
Thus, the transformation $T$ can be regarded as a blow-up transformation near the origin.  
Define 
\[
v(y):=M^{2}u(x). 
\]
Then 
\[
\det D^{2}v=\frac{f(T^{-1}y)}{\prod_{i=1}^{n-1}b_{i}}=:\tilde{f}(y)\quad \text{in}\ \widetilde{\Omega}:=T(\Omega).  
\]
It follows from \eqref{eqn6.10} and \eqref{eqn9.1} that  $\partial\widetilde{\Omega}$ is represented by 
\begin{equation}\label{eqn6.16}
y_{n}=\tilde{\rho}\big(y'\big)=\frac{1}{2}y_{1}^{2}+\frac{1}{2}\sum_{i=2}^{n-1}d_{i}y_{i}^{2}+\frac{1}{\sqrt{M}}O\Big(\big|y'\big|^{3}\Big)
\end{equation}
on $\partial\widetilde{\Omega}\cap\{|y'|\leq\sqrt{M}r_{0}\}$, where  
\[
d_{i}\leq1\ \ \text{for}\ i=2,\dots,n-1,\quad \big|D^{3}\tilde{\rho}\big|\leq\frac{C_{1}}{\sqrt{M}},\quad \big|D^{4}\tilde{\rho}\big|\leq\frac{C_{1}}{M},  
\]
and $C_{1}$ depends only on $\partial\Omega$. 
Combining \eqref{eqn9.2} and \eqref{eqn9.1} yields   
\begin{equation}\label{eqn6.11}
v=\widetilde{\varphi}\big(y',\tilde{\rho}\big(y'\big)\big)=\frac{1}{2}\big|y'\big|^{2}+\text{cubic of $y'$}+O\Big(\big|y'\big|^{4}\Big)
\end{equation}
on $\partial\widetilde{\Omega}\cap\{|y'|\leq\sqrt{M}r_{0}\}$, where
\[
\big|D_{y}^{4}\widetilde{\varphi}\big|\leq\big|D_{x}^{4}\varphi\big|\leq C. 
\]   
By applying Lemma \ref{thm6.2}, we conclude that the coefficients of the third-order terms in \eqref{eqn6.11} are also bounded.
Hence both $\partial\widetilde{\Omega}$ and $\widetilde{\varphi}$ are $C^{3,1}$-smooth on $\partial\widetilde{\Omega}\cap\{|y'|\leq\sqrt{M}r_{0}\}$, and their $C^{3,1}$-norms are independent of $M$. 

\par
\vspace{2mm}
\textbf{Step 2.}
Define a set 
\[
\omega:=\left\{y\in\widetilde{\Omega}:\ y_{n}<1,\ |y_{i}|<1,\ \,\forall2\leq i\leq n-1\right\}.  
\] 
We analyze the geometry of $\omega$. 
Clearly, the set $\omega$ is convex. 
Looking at $\partial\omega\cap\{y_{n}=1\}$, by \eqref{eqn6.16} we have 
\[
1\geq\frac{1}{2}y_{1}^{2}+\frac{1}{\sqrt{M}}O\Big(\big|y'\big|^{3}\Big).  
\]
Since $M\gg1$, any point with $|y_{1}|=2$ must be outside of $\omega$.   
The connectedness of $\omega$ yields $|y_{1}|\leq 2$, so the set $\omega$ is contained in the ball $B_{\sqrt{n+3}}$.   
It is trivial that 
\begin{equation}\label{eqn9.5}
y_{\star}:=\left(0,\frac{1}{4n},\dots,\frac{1}{4n},\frac{1}{2}\right)\in\omega,  
\end{equation}
and 
\[
\operatorname{dist}(y_{\star},\partial\omega)\geq\min\left\{\frac{1}{2},1-\frac{1}{4n},\operatorname{dist}\Big(y_{\star},\partial\omega\cap\partial\widetilde{\Omega}\Big)\right\}. 
\]
Then, since $\tilde{\rho}(y')\leq|y'|^{2}$, the ball $B_{r_{\star}}(y_{\star})$ is contained in $\omega$, where 
\begin{equation}\label{eqn9.6}
r_{\star}:=\min\left\{\frac{1}{2},1-\frac{1}{4n},\min\left\{\frac{1}{4n},\frac{1}{2}-\frac{1}{4n}\right\}\right\}. 
\end{equation}
Using the convexity of $v$, we derive that the maximum of $v$ in $\overline{\omega}$ is attained on $\partial\omega\cap\partial\widetilde{\Omega}$. 
Thus, by \eqref{eqn6.11} we obtain  
\begin{equation}\label{eqn6.19}
0\leq v\leq\sup_{\partial\omega\cap\partial\widetilde{\Omega}}\widetilde{\varphi}\leq C_{2}\quad \text{in}\ \overline{\omega},  
\end{equation}
where $C_{2}$ depends only on $\|\varphi\|_{C^{3,1}(\partial\Omega)}$.  

\par
\vspace{2mm}
\textbf{Step 3.}
We claim that there exists a constant $K>0$, independent of $M$, such that for any ball $B_{\frac{r_{\star}}{4n}}(y_{*})$ contained in $B_{r_{\star}}(y_{\star})$,
\begin{equation}\label{eqn6.18}
\inf_{B_{\frac{r_{\star}}{4n}}(y_{*})}\tilde{f}<K. 
\end{equation}
Suppose, for contradiction, that \eqref{eqn6.18} does not hold.
Then 
\[
\det D^{2}v\geq K\quad \text{in}\  B_{\frac{r_{\star}}{4n}}(y_{*}). 
\]
Since \eqref{eqn6.19} the function 
\[
\frac{K^{1/n}}{2}\left((y-y_{*})^{2}-\frac{r_{\star}}{4n}\right)+C_{2}
\]
is an upper barrier of $v$ in $B_{\frac{r_{\star}}{4n}}(y_{*})$. 
Thus 
\[
v(y_{*})\leq-K^{\frac{1}{n}}\frac{r_{\star}}{8n}+C_{2}<0
\]
by taking $K$ large, which depends only on $\|\varphi\|_{C^{3,1}(\partial\Omega)}$. 
But this contradicts with $v\geq0$.  
Therefore, we obtain claim \eqref{eqn6.18}. 

Define 
\[
\tilde{g}:=\tilde{f}^{\frac{3}{2n-2}}\quad \text{in}\ \widetilde{\Omega}. 
\]
We have 
\begin{align}\label{eqn6.17}
\left\|D_{y}^{3}\tilde{g}\right\|_{L^{\infty}(\widetilde{\Omega})}&\leq \left(\prod_{i=1}^{n-1}b_{i}\right)^{\frac{-3}{2n-2}}b_{1}^{\frac{3}{2}}\left\|D_{x}^{3}\left(f^{\frac{3}{2n-2}}\right)\right\|_{L^{\infty}(\Omega)}\nonumber\\
&\leq\left\|f^{\frac{3}{2n-2}}\right\|_{C^{2,1}(\overline{\Omega})}\leq C. 
\end{align}
From \eqref{eqn6.18}, there is a point $\tilde{y}\in B_{\frac{r_{\star}}{4n}}(y_{\star})$ satisfying $\tilde{f}(\tilde{y})\leq K$. 
The fact 
\[
0\leq \tilde{g}\Big(\tilde{y}+\frac{r_{\star}}{2}e\Big)+\tilde{g}\Big(\tilde{y}-\frac{r_{\star}}{2}e\Big)\leq2K^{\frac{3}{2(n-1)}}+\partial_{ee}\tilde{g}(\tilde{y})\left(\frac{r_{\star}}{2}\right)^{2}+O\big(r_{\star}^{3}\big) 
\]
yields 
\begin{equation}\label{eqn6.20}
\partial_{ee}\tilde{g}(\tilde{y})\geq-\frac{C}{r_{\star}^{2}}\left(K^{\frac{3}{2(n-1)}}+1\right),\quad \forall e\in\mathbb{S}^{n-1}.  
\end{equation}
Combining \eqref{eqn6.17} and \eqref{eqn6.20} yields that there exists a constant $K_{1}$ such that 
\[
D^{2}\tilde{g}+K_{1}\mathbf{I}_{n}\quad \text{is positive definite in $\omega$},  
\]
that is $\tilde{g}$ is semi-convex in $\omega$. 
Define 
\[
\bar{g}:=\tilde{g}+\frac{K_{1}}{2}|y|^{2}\quad \text{in}\ \overline{\omega}. 
\]
Then $\bar{g}$ is convex in $\omega$, and there exists a constant $\overline{K}>0$ such that 
\begin{equation}\label{eqn6.21}
\inf_{B_{\frac{r_{\star}}{4n}}(y_{*})}\bar{g}< \overline{K} 
\end{equation}
for any ball $B_{\frac{r_{\star}}{4n}}(y_{*})$ contained in $B_{r_{\star}}(y_{\star})$. 
By \eqref{eqn6.21} and the convexity of $\bar{g}$, it follows (see Figure \ref{figure 1}) that  
\[
|D\bar{g}|\leq \frac{2\overline{K}}{r_{\star}/4}\quad \text{in}\ B_{\frac{r_{\star}}{4}}(y_{\star}),
\]
and then by the mean value theorem,
\[
\partial_{ee}\bar{g}(y_{e})\leq \frac{4\overline{K}}{r_{\star}/4}\frac{1}{r_{\star}/4},\quad \forall e\in\mathbb{S}^{n-1},
\]
for some point $y_{e}\in\omega$ depending on $e$. 
Combining this and \eqref{eqn6.17}, we obtain 
\[
\|\tilde{g}\|_{C^{2,1}(\overline{\omega})}\leq K_{2}, 
\]
where $K_{2}$ depends on $\Omega$, $\|\varphi\|_{C^{3,1}(\partial\Omega)}$, and $\|f^{3/(2n-2)}\|_{C^{2,1}(\overline{\Omega})}$. 
\begin{figure}[htbp]
\centering
%\hspace*{-1.8cm}
\includegraphics[width=0.5\textwidth]{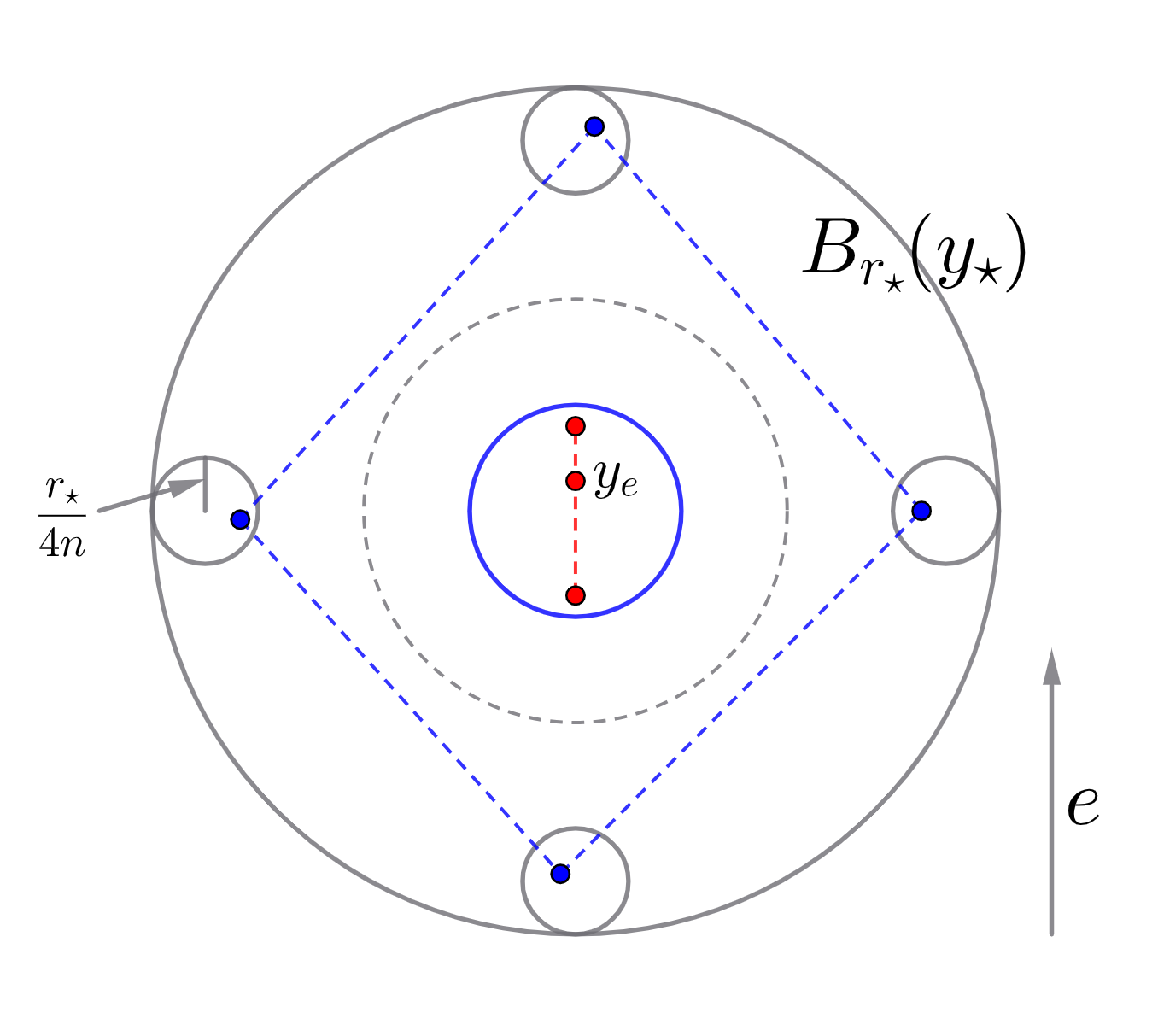}
\caption{Use the convexity of $\bar{g}$ to get a uniform bound on its first-order derivatives in $B_{\frac{r_{\star}}{4}}(y_{\star})$, and further a uniform bound on its second-order derivatives at some points is obtained by the mean value theorem.}
\label{figure 1}
\end{figure}

Therefore, 
\[
K_{2}^{\frac{2(n-1)}{3}}\geq\tilde{f}(0)=\frac{f(0)}{\prod_{i=1}^{n-1}b_{i}}. 
\]
This completes the proof of Lemma \ref{thm6.3}. 
\end{proof}

\subsection{Lower bounds for boundary tangential-normal derivatives}

\begin{lemma}\label{thm6.4}
For any $x\in\partial\Omega\cap\{|x'|\leq r_{0}\}$ satisfying $|x|\leq\frac{1}{2}\sqrt{b_{n-1}}$, we have 
\[
\left|u_{\nu\eta}(x)\right|\leq C\sqrt{b_{n-1}}, 
\]
where $\eta$ is any unit tangential vector to $\partial\Omega$ at $x$, and $C>0$ depends on $\Omega$, $\|\varphi\|_{C^{3,1}(\partial\Omega)}$, and $\|f^{3/(2n-2)}\|_{C^{2,1}(\overline{\Omega})}$, but is independent of $\inf_{\Omega}f$. 
\end{lemma}
\begin{proof}
Define 
\[
M:=\frac{1}{b_{n-1}}. 
\] 
Make a dilation $x\rightarrow y=T(x)$ defined by 
\begin{equation}\label{eqn9.3}
y'=\sqrt{M}x'\quad \text{and}\quad 
y_{n}=Mx_{n}. 
\end{equation}
Without loss of generality, we assume $M\gg1$, which depends on the $C^{3,1}$-norm of $\partial\Omega$. 
Thus, the transformation $T$ can be regarded as a blow-up transformation near the origin.  
Define 
\[
v(y):=M^{2}u(x). 
\]
Then 
\[
\det D^{2}v=M^{n-1}f\big(T^{-1}y\big)=:\tilde{f}(y)\quad \text{in}\ \widetilde{\Omega}:=T(\Omega).  
\]
It follows from \eqref{eqn6.10} and \eqref{eqn9.3} that  $\partial\widetilde{\Omega}$ is represented by 
\begin{equation}\label{eqn6.23}
y_{n}=\tilde{\rho}\big(y'\big)=\frac{1}{2}\big|y'\big|^{2}+\frac{1}{\sqrt{M}}O\Big(\big|y'\big|^{3}\Big)
\end{equation}
on $\partial\widetilde{\Omega}\cap\{|y'|\leq\sqrt{M}r_{0}\}$, where  
\[
\big|D^{3}\tilde{\rho}\big|\leq\frac{C_{1}}{\sqrt{M}},\quad \big|D^{4}\tilde{\rho}\big|\leq\frac{C_{1}}{M},  
\]
and $C_{1}$ depends only on $\partial\Omega$. 
Combining \eqref{eqn9.2} and \eqref{eqn9.3} yields   
\begin{equation}\label{eqn9.4}
v=\widetilde{\varphi}\big(y',\tilde{\rho}\big(y'\big)\big)=\frac{1}{2}\sum_{i=1}^{n-1}\tilde{b}_{i}y_{i}^{2}+\text{cubic of $y'$}+O\Big(\big|y'\big|^{4}\Big)
\end{equation}
on $\partial\widetilde{\Omega}\cap\{|y'|\leq\sqrt{M}r_{0}\}$, where
\[
\tilde{b}_{i}\leq1\ \ \text{for}\ i=1,2,\dots,n-1,\quad \text{and}\quad \big|D_{y}^{4}\widetilde{\varphi}\big|\leq\big|D_{x}^{4}\varphi\big|\leq C. 
\]   
By applying Lemma \ref{thm6.2}, we conclude that the coefficients of the third-order terms in \eqref{eqn9.4} are also bounded. 
Hence both $\partial\widetilde{\Omega}$ and $\widetilde{\varphi}$ are $C^{3,1}$-smooth on $\partial\widetilde{\Omega}\cap\{|y'|\leq\sqrt{M}r_{0}\}$, and their $C^{3,1}$-norms are independent of $M$. 

Define a set 
\[
\omega:=\left\{y\in\widetilde{\Omega}:\ y_{n}<1\right\}.  
\] 
Looking at $\partial\omega\cap\{y_{n}=1\}$, by \eqref{eqn6.23} we have 
\[
1=\frac{1}{2}\big|y'\big|^{2}+\frac{1}{\sqrt{M}}O\Big(\big|y'\big|^{3}\Big).    
\]
Since $M\gg1$, any point with $|y'|=2$ must be outside of $\omega$.   
So the set $\omega$ is contained in the ball $B_{2}$ by the convexity of $\omega$.  
Similar to Step 2 in the proof of Lemma \ref{thm6.3}, the ball $B_{r_{\star}}(y_{\star})$ is contained in $\omega$, where $y_{\star}$ and $r_{\star}$ are given in \eqref{eqn9.5} and \eqref{eqn9.6}, respectively. 
Furthermore, using the convexity of $v$ and \eqref{eqn9.4}, we derive 
\[
0\leq v\leq\sup_{\partial\omega\cap\partial\widetilde{\Omega}}\widetilde{\varphi}\leq C_{2}\quad \text{in}\ \overline{\omega}, 
\]
where $C_{2}$ depends only on $\|\varphi\|_{C^{3,1}(\partial\Omega)}$. 

Define 
\[
\tilde{g}:=\tilde{f}^{\frac{3}{2n-2}}\quad \text{in}\ \widetilde{\Omega}. 
\]
By the same argument as in Step 3 of the proof of Lemma \ref{thm6.3}, we get 
\[
\left\|\tilde{g}\right\|_{C^{2,1}(\overline{\omega})}\leq K,  
\]
where $K$ depends on $\Omega$, $\|\varphi\|_{C^{3,1}(\partial\Omega)}$, and $\|f^{3/(2n-2)}\|_{C^{2,1}(\overline{\Omega})}$. 
Note that $\partial\widetilde{\Omega}$ is uniformly convex on $\partial\widetilde{\Omega}\cap\partial\omega$ by \eqref{eqn6.23}. 
The first-order normal estimate on the boundary follows from Theorem 17.21 of Gilbarg-Trudinger \cite{GT} that 
\begin{equation}\label{eqn6.22}
\left|v_{\tilde{\nu}}(y)\right|\leq C_{3},\quad \forall y\in\partial\widetilde{\Omega}\cap\left\{y_{n}\leq7/8\right\},  
\end{equation}
where $\tilde{\nu}=\tilde{\nu}(y)$ denotes the unit interior normal vector to $\partial\widetilde{\Omega}$ at $y$, and $C_{3}$ depends on $\Omega$, $\|\varphi\|_{C^{3,1}(\partial\Omega)}$, and $\|f^{3/(2n-2)}\|_{C^{2,1}(\overline{\Omega})}$. 
Using the convexity of $v$ and the uniform convexity of  $\partial\widetilde{\Omega}$, it follows from \eqref{eqn6.22} that  
\[
|Dv|\leq C_{4}\quad \text{in}\ \widetilde{\Omega}\cap\left\{y_{n}\leq3/4\right\}, 
\]
where $C_{4}$ depends on $\Omega$, $\|\varphi\|_{C^{3,1}(\partial\Omega)}$, and $\|f^{3/(2n-2)}\|_{C^{2,1}(\overline{\Omega})}$. 
In view of Remark \ref{thm2.9}, to weaken the condition to $\inf_{\Omega}\tilde{g}>0$ when applying Lemma \ref{thm2.2}, we adapt the proof of Lemma \ref{thm7.2} by differentiating with respect to a tangential vector filed to establish the boundary tangential-normal estimate.  
The convexity of $v$ and the uniform convexity of $\partial\widetilde{\Omega}$ play key roles in this argument.
Define tangential vector fields 
\[
\tilde{\eta}^{l}(y):=e_{l}+\tilde{\rho}_{l}\big(y'\big)e_{n}\quad \text{for}\  l=1,2,\dots,n-1,   
\]
in $\overline{\omega}$. 
Consider functions 
\[
\Psi^{l}(y):=\pm\tilde{\eta}^{l}\cdot\nabla(v-\widetilde{\varphi})+A|y|^{2}-B_{A}y_{n}\quad \text{in}\ \overline{\omega},  
\]
for all $l=1,2,\dots,n-1$, where positive constants $A$ and $B_{A}$ are to be determined. 
We can obtain 
\[
|v_{nl}(0)|\leq C,\quad \forall 1\leq l\leq n-1, 
\]
and similarly 
\[
\left|v_{\tilde{\nu}\tilde{\eta}^{l}}(y)\right|\leq C,\quad \forall y\in\partial\widetilde{\Omega}\cap\left\{y_{n}\leq1/2\right\},\quad \forall 1\leq l\leq n-1. 
\]
where $C$ depends on $\Omega$, $\|\varphi\|_{C^{3,1}(\partial\Omega)}$, and $\|f^{3/(2n-2)}\|_{C^{2,1}(\overline{\Omega})}$. 
Pulling back to the $x$-coordinates, we finish the proof of Lemma \ref{thm6.4}. 
Since a more complicated version of these arguments will be needed later, in the absence of uniform convexity of $\partial\widetilde{\Omega}$, we omit the details here.  
\end{proof}

\par
\vspace{2mm}
We proceed from Lemma \ref{thm6.4} by induction to obtain a refinement. 
Define
\begin{equation}\label{eqn6.43}
\eta^{l}=\eta^{l}(x):=e_{l}+\rho_{l}\big(x'\big)e_{n},\quad \forall x\in\partial\Omega\cap\left\{\big|x'\big|\leq r_{0}\right\},  
\end{equation}
for all $l=1,2,\dots,n-1$. 
In fact, $\eta^{l}(x)$ is the tangential direction of $\partial\Omega$, which lies in the two-dimensional plane parallel to the $x_{l}$- and $x_{n}$-axes and passes through the point $x$. 
Our induction hypothesis is the following. 

\begin{assumption}\label{thm6.5}
For some fixed $k\in\{1,2,\dots,n-2\}$ and each $i=k+1,\dots,n-1$, there exists a  constant $\theta_{i}>0$ such that 
for any $x\in\partial\Omega\cap\{|x'|\leq r_{0}\}$ satisfying  $|x|\leq\theta_{i}\sqrt{b_{i}}$, we have 
\[
\left|u_{\nu\eta^{j}}(x)\right|\leq C\sqrt{b_{i}},\quad \forall1\leq j\leq i,
\]
where constants $\theta_{i}$ and $C$ depend on $\Omega$, $\|\varphi\|_{C^{3,1}(\partial\Omega)}$, and $|f^{3/(2n-2)}|_{C^{2,1}(\overline{\Omega})}$, but is independent of $\inf_{\Omega}f$.  
\end{assumption}

\begin{lemma}\label{thm6.6}
Let Assumption \ref{thm6.5} hold for some fixed $k\in\{1,2,\dots,n-2\}$. 
Then there exists a constant $\theta_{k}>0$ such that for any $x\in\partial\Omega\cap\{|x'|\leq r_{0}\}$ satisfying  $|x|\leq\theta_{k}\sqrt{b_{k}}$, we have 
\begin{equation}\label{eqn6.24}
\left|u_{\nu\eta^{j}}(x)\right|\leq C\sqrt{b_{k}},\quad \forall1\leq j\leq k,
\end{equation}
where constants $\theta_{k}$ and $C$ depend on $\Omega$, $\|\varphi\|_{C^{3,1}(\partial\Omega)}$, and $|f^{3/(2n-2)}|_{C^{2,1}(\overline{\Omega})}$, but are independent of $\inf_{\Omega}f$. 
\end{lemma}
\begin{proof}
Define 
\[
M:=\frac{1}{b_{k}}. 
\] 
Make a dilation $x\rightarrow y=T(x)$ defined by 
\begin{equation}\label{eqn6.42}
y_{i}:=M_{i}x_{i}\quad \text{for}\ i=1,2,\dots,n, 
\end{equation}
where 
\[
M_{i}:=
\begin{cases}
\sqrt{M},\quad i=1,\dots,k,\\
\sqrt{b_{i}}M\ (\geq\sqrt{M}),\quad i=k+1,\dots,n-1,\\
M,\quad i=n. 
\end{cases}
\]
Without loss of generality, we assume $M\gg1$, which depends on the $C^{3,1}$-norm of $\partial\Omega$. 
Thus, the transformation $T$ can be regarded as a blow-up transformation near the origin. 
Define 
\[
v(y):=M^{2}u(x). 
\]
Then 
\[
\det D^{2}v=\frac{M^{k}f(T^{-1}y)}{\prod_{i=k+1}^{n-1}b_{i}}=:\tilde{f}(y)\quad \text{in}\ \widetilde{\Omega}:=T(\Omega).  
\]
Denote 
\[
\hat{y}:=(y_{1},\dots,y_{k})\quad \text{and}\quad \tilde{y}:=(y_{k+1},\dots,y_{n-1}). 
\]
It follows from \eqref{eqn6.10} and \eqref{eqn6.42} that  $\partial\widetilde{\Omega}$ is represented by 
\begin{equation}\label{eqn6.26}
y_{n}=\tilde{\rho}\big(y'\big)=\frac{1}{2}|\hat{y}|^{2}+\frac{1}{2}\sum_{i=k+1}^{n-1}d_{i}y_{i}^{2}+\frac{1}{\sqrt{M}}O\Big(\big|y'\big|^{3}\Big)
\end{equation}
on $\partial\widetilde{\Omega}\cap\{|y'|\leq\sqrt{M}r_{0}\}$, where  
\[
d_{i}=\frac{b_{k}}{b_{i}}\ (\leq1)\ \ \forall k+1\leq i\leq n-1,\ \  \big|D^{3}\tilde{\rho}\big|\leq\frac{C_{1}}{\sqrt{M}},\ \  \big|D^{4}\tilde{\rho}\big|\leq\frac{C_{1}}{M},  
\]
and $C_{1}$ depends only on $\partial\Omega$. 
Combining \eqref{eqn9.2} and \eqref{eqn6.42} yields   
\begin{equation}\label{eqn6.28}
v=\widetilde{\varphi}\big(y',\tilde{\rho}\big(y'\big)\big)=\frac{1}{2}|\tilde{y}|^{2}+\frac{1}{2}\sum_{i=1}^{k}\tilde{b}_{i}y_{i}^{2}+\text{cubic of $y'$}+O\Big(\big|y'\big|^{4}\Big)
\end{equation}
on $\partial\widetilde{\Omega}\cap\{|y'|\leq\sqrt{M}r_{0}\}$, where
\[
\tilde{b}_{i}=\frac{b_{i}}{b_{k}}\ (\leq1)\ \ \text{for}\ i=1,2,\dots,k, \quad\text{and} \quad \big|D_{y}^{4}\widetilde{\varphi}\big|\leq\big|D_{x}^{4}\varphi\big|\leq C. 
\] 
By applying Lemma \ref{thm6.2}, we conclude that the coefficients of the third-order terms in \eqref{eqn6.28} are also bounded. 
Hence both $\partial\widetilde{\Omega}$ and $\widetilde{\varphi}$ are $C^{3,1}$-smooth on $\partial\widetilde{\Omega}\cap\{|y'|\leq\sqrt{M}r_{0}\}$, and their $C^{3,1}$-norms are independent of $M$. 

Define a set 
\[
\omega:=\left\{y\in\widetilde{\Omega}:\ y_{n}<\beta^{2},\ |y_{i}|<\beta,\ \,\forall k+1\leq i\leq n-1\right\}, 
\]
where $\beta$ is a small positive constant to be determined.  
We analyze the geometry of $\omega$. 
Clearly, the set $\omega$ is convex. 
Looking at $\partial\omega\cap\{y_{n}=\beta^{2}\}$, by \eqref{eqn6.26} we have 
\[
\beta^{2}\geq\frac{1}{2}|\hat{y}|^{2}+\frac{1}{\sqrt{M}}O\Big(\big|y'\big|^{3}\Big).
\]
Since $M\gg1$, any point with $|\hat{y}|=2\beta$ must be outside of $\omega$.   
So the set $\omega$ is contained in the ball $B_{\sqrt{n+3}\beta}$ by the convexity of $\omega$.  
It is trivial that 
\[
y_{\star}:=\left(0,\dots,0,\frac{\beta}{4n},\dots,\frac{\beta}{4n},\frac{\beta^{2}}{2}\right)\in\omega,  
\]
and 
\[
\operatorname{dist}(y_{\star},\partial\omega)\geq\min\left\{\frac{\beta^{2}}{2},\beta-\frac{\beta}{4n},\operatorname{dist}\Big(y_{\star},\partial\omega\cap\partial\widetilde{\Omega}\Big)\right\}. 
\]
Then, since $\tilde{\rho}(y')\leq|y'|^{2}$, the ball $B_{r_{\star}}(y_{\star})$ is contained in $\omega$, where 
\[
r_{\star}:=\min\left\{\frac{\beta^{2}}{2},\beta-\frac{\beta}{4n},\min\left\{\frac{\beta}{4n},\frac{\beta^{2}}{2}-\frac{\beta^{2}}{4n}\right\}\right\}. 
\]
Using the convexity of $v$ and \eqref{eqn6.28}, we derive 
\begin{equation}\label{eqn6.34}
0\leq v\leq\sup_{\partial\omega\cap\partial\widetilde{\Omega}}\widetilde{\varphi}\leq C_{2}\quad \text{in}\ \overline{\omega}, 
\end{equation}
where $C_{2}$ depends only on $\|\varphi\|_{C^{3,1}(\partial\Omega)}$. 
Define 
\[
\tilde{g}:=\tilde{f}^{\frac{3}{2n-2}}\quad \text{in}\ \widetilde{\Omega},\quad \text{and}\quad p=\frac{2n-2}{3}. 
\]
By the same argument as in Step 3 of the proof of Lemma \ref{thm6.3}, we get 
\[
\left\|\tilde{g}\right\|_{C^{2,1}(\overline{\omega})}\leq C_{3},  
\]
where $C_{3}$ depends on $\Omega$, $\|\varphi\|_{C^{3,1}(\partial\Omega)}$, and $\|f^{3/(2n-2)}\|_{C^{2,1}(\overline{\Omega})}$. 

The main difficulty is that we cannot control the convexity of $\partial\widetilde{\Omega}$ in \eqref{eqn6.26}. 
An observation is that the directions, in which the convexity is under control, with respect to $\partial\widetilde{\Omega}$ and $\widetilde{\varphi}$ complement each other. 

\par
\vspace{2mm}
\textbf{Step 1:} \emph{we prove $|Dv|\leq C$ in $\omega$.} 
Define  
\[
\underline{v}(y):=\frac{1}{2}\sigma\big|y'\big|^{2}+\frac{1}{2}Ky_{n}^{2}-Ky_{n}, 
\]
where constants $\sigma>0$ and $K>0$ are to be determined. 
Let $K$ large depending on $\sigma$ such that 
\begin{equation}\label{eqn6.39}
\det D^{2}\underline{v}=\sigma^{n-1}K\geq2^{n}\sup_{\omega}\tilde{f}. 
\end{equation}

We now verify that 
\begin{equation}\label{eqn6.25}
\underline{v}\leq v\quad \text{on}\ \partial\omega. 
\end{equation}
The boundary $\partial\omega$ consists of three parts: $\partial_{1}\omega\cup\partial_{2}\omega\cup\partial_{3}\omega$, where $\partial_{1}\omega:=\partial\omega\cap\partial\widetilde{\Omega}$, $\partial_{2}\omega:=\partial\omega\cap\{y_{n}=\beta^{2}\}$, and $\partial_{3}\omega:=\partial\omega\cap\{|y_{i}|=\beta\ \text{for some}\ i=k+1,\dots,n-1\}$.
It follows from \eqref{eqn6.28} that on $\partial_{1}\omega$,  
\[
v=\frac{1}{2}|\tilde{y}|^{2}+\frac{1}{2}\sum_{i=1}^{k}\tilde{b}_{i}y_{i}^{2}+O\Big(\big|y'\big|^{3}\Big)\geq \frac{1}{4}|\tilde{y}|^{2}-|\hat{y}|^{2}
\]
by taking $\beta$ small.  
By \eqref{eqn6.26}, we have 
\[
y_{n}=\frac{1}{2}|\hat{y}|^{2}+\frac{1}{2}\sum_{i=k+1}^{n-1}d_{i}y_{i}^{2}+\frac{1}{\sqrt{M}}O\Big(\big|y'\big|^{3}\Big)
\geq\frac{1}{4}|\hat{y}|^{2}-\beta|\tilde{y}|^{2}
\]
on $\partial_{1}\omega$, since $M\gg1$. 
Hence 
\[
\underline{v}\leq\frac{1}{2}\sigma\big|y'\big|^{2}-\frac{1}{2}Ky_{n}\leq\left(\frac{1}{2}\sigma+\frac{1}{2}K\beta\right)\left|\tilde{y}\right|^{2}+\left(\frac{1}{2}\sigma-\frac{1}{8}K\right)\left|\hat{y}\right|^{2}\leq v
\]
on $\partial_{1}\omega$, by taking $\sigma<1/4$ and $\beta$ small depending on $K$. 
It holds that 
\[
\underline{v}\leq\frac{1}{2}\sigma\big|y'\big|^{2}-\frac{1}{2}K\beta^{2}\leq\frac{1}{2}\beta^{2}((n+3)\sigma-K)\leq0\leq v\quad \text{on}\ \partial_{2}\omega. 
\]
We only consider the piece  $\partial'_{3}\omega:=\partial\omega\cap\{y_{n-1}=\beta\}$, since other pieces of $\partial_{3}\omega$ can be handled in the same way. 
Let $\varepsilon_{0}>0$ be a small parameter to be determined later. 
It holds that  
\[
\underline{v}\leq\frac{1}{2}\beta^{2}((n+3)\sigma-K\varepsilon_{0})\leq0\leq v\quad \text{on}\ \partial'_{3}\omega\cap\left\{y_{n}\geq\varepsilon_{0}\beta^{2}\right\} 
\] 
by taking $\sigma$ small depending on $\varepsilon_{0}$. 

We claim that   
\begin{equation}\label{eqn6.27}
v\geq\frac{1}{4}\beta^{2}\quad \text{on}\ \partial'_{3}\omega\cap\left\{y_{n}<\varepsilon_{0}\beta^{2}\right\}.   
\end{equation}
It follows from \eqref{eqn6.28} that  
\[
v\geq\frac{1}{2}\beta^{2}+O\big(\beta^{3}\big)\geq\frac{1}{4}\beta^{2}\quad \text{on}\ \partial'_{3}\omega\cap\left\{y_{n}<\varepsilon_{0}\beta^{2}\right\}\cap\partial\widetilde{\Omega},
\]
by taking $\beta$ small. 
It remains to consider points $p=(\hat{p},\tilde{p},p_{n})$ on $\partial'_{3}\omega\cap\{y_{n}<\varepsilon_{0}\beta^{2}\}\cap\widetilde{\Omega}$, where $\hat{p}:=(p_{1},\dots,p_{k})$ and $\tilde{p}:=(p_{k+1},\dots,p_{n-1})$. 
Define 
\[
\delta:=\left|\nabla_{\hat{y}}\tilde{\rho}(0,\tilde{p})\right|\left|\hat{p}\right| 
\]
and 
\[
p^{*}:=(0,\tilde{p},p_{n}+\delta),\quad p^{0}:=(0,\tilde{p},\tilde{\rho}(0,\tilde{p}))\in\partial\widetilde{\Omega}.
\]
From \eqref{eqn6.26}, we deduce 
\[
\left|\nabla_{\hat{y}}\tilde{\rho}(0,\tilde{p})\right|=O\big(\beta^{2}\big). 
\]
Then $\delta=O(\beta^{3})$. 
Since $\widetilde{\Omega}$ is convex, 
\[
p^{*}_{n}=p_{n}+\delta> p^{0}_{n}+\nabla_{\hat{y}}\tilde{\rho}(0,\tilde{p})\cdot\hat{p}+\delta\geq p^{0}_{n},
\]
which implies $p^{*}\in\widetilde{\Omega}$.
Extend the segment $\overline{pp^{*}}$ until it intersects  $\partial\widetilde{\Omega}$ at a point denoted by $\bar{p}$ (see Figure \ref{figure 2}). 
\begin{figure}[htbp]
\centering
%\hspace*{-1.8cm}
\includegraphics[width=0.95\textwidth]{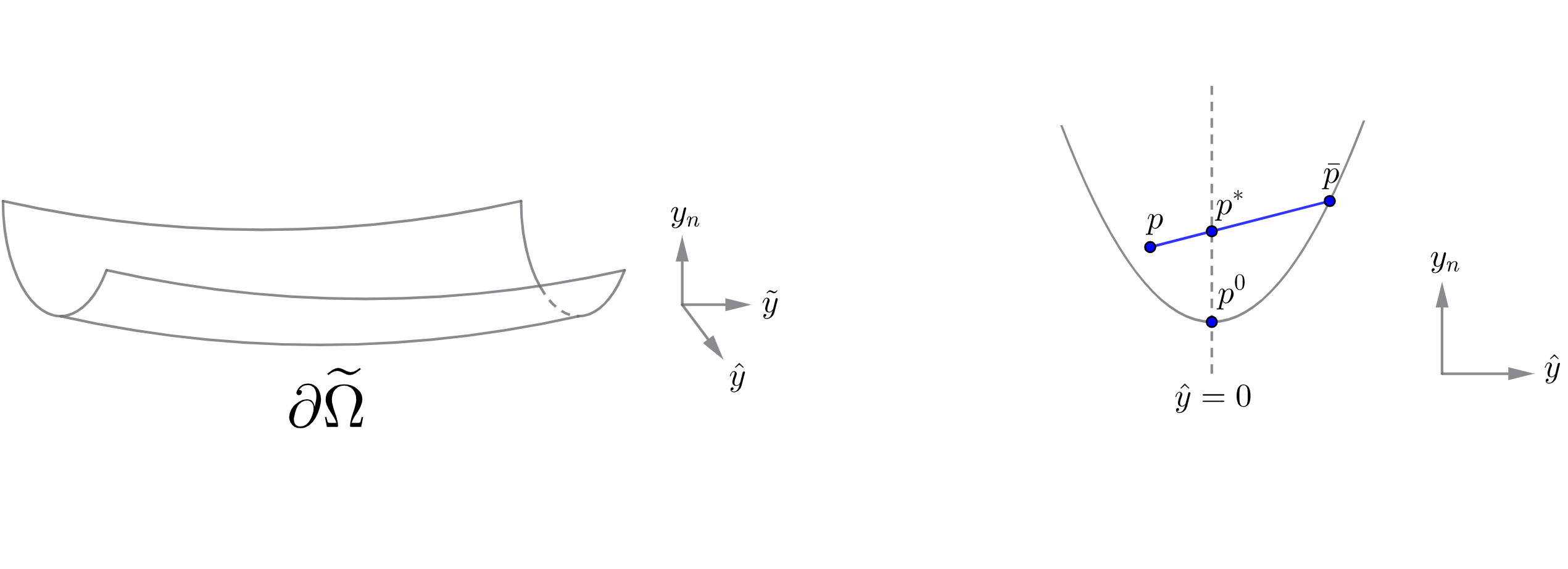}
\caption{The convexity of $\partial\widetilde{\Omega}$ in the direction $\tilde{y}$ is not under control. Cut $\widetilde{\Omega}$ by the hyperplane $\tilde{y}=\tilde{p}$ through the point $p$, and relate it to points on the boundary $\partial\widetilde{\Omega}$.}
\label{figure 2}
\end{figure}
From \eqref{eqn6.26}, the graph of $\partial\widetilde{\Omega}$ near $p^{0}$ in the direction $\hat{y}$ is a perturbation of the paraboloid $|\hat{y}|^{2}/2$, by taking $\beta$ small. 
Hence we have 
\begin{equation}\label{eqn6.30}
\frac{1}{2}|p^{*}-p|\leq|\bar{p}-p^{*}|. 
\end{equation}
Since the slope of $\overline{pp^{*}}$ is   $|\nabla_{\hat{y}}\tilde{\rho}(0,\tilde{p})|=O(\beta^{2})$, we solve 
\[
\frac{1}{4}t^{2}\leq\left(\varepsilon_{0}\beta^{2}+O\big(\beta^{3}\big)\right)+O\big(\beta^{2}\big)t
\] 
to deduce $t=O(\beta)$. 
Thus 
\begin{equation}\label{eqn6.29}
\bar{p}_{n}\leq \left(\varepsilon_{0}\beta^{2}+O\big(\beta^{3}\big)\right)+O\big(\beta^{3}\big)\leq2\varepsilon_{0}\beta^{2}
\end{equation}
by taking $\beta$ small depending on $\varepsilon_{0}$. 
Using \eqref{eqn6.26} and \eqref{eqn6.29} we derive 
\begin{equation}\label{eqn6.31}
\frac{1}{2}\sum_{i=1}^{k}\left|\bar{p}_{i}\right|^{2}\leq\bar{p}_{n}+O\big(\beta^{3}\big)\leq3\varepsilon_{0}\beta^{2}. 
\end{equation}
For any $T^{-1}x=y=(0,\tilde{y},\tilde{\rho}(0,\tilde{y}))\in\partial\widetilde{\Omega}\cap B_{\theta_{k+1}}(0)$ with $\theta_{k+1}$ given in Assumption \ref{thm6.5}, we have 
\[
\hat{x}=0\quad \text{and}\quad |x_{i}|=\left|\frac{y_{i}}{M_{i}}\right|\leq\theta_{k+1}\frac{b_{k}}{\sqrt{b_{i}}},\quad \forall k+1\leq i\leq n-1. 
\]
Since $|\nabla u(x)-\nabla u(0)|=O(|x'|)$ and $|u_{\nu}(x)-u_{n}(x)|=O(|x'|^{2})$ on $\partial\Omega\cap\{|x'|\leq r_{0}\}$ (see later \eqref{eqn4.6}, \eqref{eqn4.9}), using Assumption \ref{thm6.5}, we infer 
\begin{align*}
\left|v_{y_{n}}(y)\right|&=\frac{1}{b_{k}}\left(\left|u_{\nu}\big(x',\rho\big(x'\big)\big)\right|+O\Big(\big|x'\big|^{2}\Big)\right)\\
&=\frac{1}{b_{k}}\left|u_{n}(0)+\nabla_{x'}(u_{\nu})\big(tx',\rho\big(tx'\big)\big)\cdot x'\right|+C\theta_{k+1}^{2}\\
&=\frac{1}{b_{k}}\sum_{i=k+1}^{n-1}(u_{\nu\eta_{i}}+\partial_{x_{i}}\nu\cdot \nabla u)\big(tx',\rho\big(tx'\big)\big)x_{i}+C\theta_{k+1}^{2}\\
&\leq\frac{1}{b_{k}}\sum_{i=k+1}^{n-1}\left(C\sqrt{b_{i}}x_{i}+C\big|x'\big|^{2}\right)+C\theta_{k+1}^{2}\\
&\leq C_{4}   
\end{align*}
for some $t\in(0,1)$, where $C_{4}$ depends on $\Omega$, $\|\varphi\|_{C^{3,1}(\partial\Omega)}$, and $\|f^{3/(2n-2)}\|_{C^{2,1}(\overline{\Omega})}$. 
By the convexity of $v$, \eqref{eqn6.28}, and \eqref{eqn6.29}, we get  
\begin{align}\label{eqn6.32}
v(p^{*})&\geq v\big(p^{0}\big)+\nabla v\big(p^{0}\big)\cdot\big(p^{*}-p^{0}\big)\nonumber\\
&\geq \widetilde{\varphi}\big(p^{0}\big)-C_{4}\bar{p}_{n}\nonumber\\
&\geq\frac{1}{2}\beta^{2}-2C_{4}\varepsilon_{0}\beta^{2}+O\big(\beta^{3}\big)\nonumber\\
&\geq\frac{1}{3}\beta^{2} 
\end{align}
by taking $\varepsilon_{0}$ small. 
Furthermore, we obtain from \eqref{eqn6.28} and \eqref{eqn6.29}--\eqref{eqn6.32} that 
\begin{align}
v(\bar{p})=\widetilde{\varphi}(\bar{p})&=\frac{1}{2}\sum_{i=1}^{k}\tilde{b}_{i}\left|\bar{p}_{i}\right|^{2}+\widetilde{\varphi}\big(p^{0}\big)+O\big(\beta^{3}\big)\nonumber\\
&\leq v(p^{*})+C_{4}\bar{p}_{n}+4\varepsilon_{0}\beta^{2}\nonumber\\
&\leq v(p^{*})+(2C_{4}+4)\varepsilon_{0}\beta^{2}\nonumber\\ 
&\leq \frac{9}{8}v(p^{*})\label{eqn6.33}
\end{align}
by taking $\varepsilon_{0}$ small. 
Using the convexity of $v$, \eqref{eqn6.33}, \eqref{eqn6.30}, and \eqref{eqn6.32}, we get 
\[
v(p)\geq-\frac{|p^{*}-p|}{|\bar{p}-p^{*}|}v(\bar{p})+\frac{|\bar{p}-p|}{|\bar{p}-p^{*}|}v(p^{*})\geq\left(1-\frac{1}{8}\frac{|p^{*}-p|}{|\bar{p}-p^{*}|}\right)v(p^{*})\geq\frac{1}{4}\beta^{2}. 
\]
In conclusion, claim \eqref{eqn6.27} is valid. 

We deduce from \eqref{eqn6.27} that  
\[
\underline{v}\leq\frac{1}{2}\sigma\big|y'\big|^{2}\leq\frac{n+3}{2}\sigma\beta^{2} \leq v\quad \text{on}\ \partial'_{3}\omega\cap\left\{y_{n}<\varepsilon_{0}\beta^{2}\right\},  
\]
by taking $\sigma$ small. 
Thus \eqref{eqn6.25} holds. 
It follows from the comparison principle that 
\[
\underline{v}\leq v\quad \text{in}\ \overline{\omega}. 
\]
Since $\underline{v}(0)=v(0)=0$, 
\[
v_{n}(0)\geq\underline{v}_{n}(0).
\]
Using the convexity of $v$ and \eqref{eqn6.34}, we get 
\[
v_{n}(0)\leq\frac{v((0,\dots,0,\beta^{2}))-v(0)}{\beta^{2}}\leq \frac{C_{2}}{\beta^{2}}. 
\]
So $|v_{n}(0)|\leq C$. 
For any point $y_{0}\in\partial\omega\cap\partial\widetilde{\Omega}$, letting $y_{0}$ be the origin and $\tilde{\nu}(y_{0})=e_{n}$ by translation and rotation, letting 
$v(y_{0})=0$ and $\nabla v(y_{0})=0$ by subtracting an affine function, with similar arguments as above, we get 
\begin{equation}\label{eqn6.35}
|v_{\tilde{\nu}}(y_{0})|\leq C,\quad \forall y_{0}\in\partial\omega\cap\partial\widetilde{\Omega},  
\end{equation}
where $\tilde{\nu}=\tilde{\nu}(y)$ denotes the unit interior normal vector to $\partial\widetilde{\Omega}$ at $y$. 
From \eqref{eqn6.35} and the convexity of $v$, it follows that 
\[
|Dv|\leq C\quad \text{in}\ \overline{\omega}
\]
for a smaller $\beta$, where $C$ depends on $\Omega$, $\|\varphi\|_{C^{3,1}(\partial\Omega)}$, and $|f^{3/(2n-2)}|_{C^{2,1}(\overline{\Omega})}$.

\par
\vspace{2mm}
\textbf{Step 2.} 
Define tangential vector fields 
\[
\tilde{\eta}^{l}(y):=e_{l}+\tilde{\rho}_{l}\big(y'\big)e_{n} \quad \text{for}\ l=1,2,\dots,n-1,    
\]
in $\overline{\omega}$.
Indeed, for any $l\in\{1,2,\dots,n-1\}$, 
\begin{equation}\label{eqn9.9}
T\eta^{l}=T(e_{l}+\partial_{x_{l}}\rho e_{n})=M_{l}e_{l}+\partial_{x_{l}}\rho M_{n}e_{n}=M_{l}\left(e_{l}+\partial_{y_{l}}\tilde{\rho}e_{n}\right)=M_{l}\tilde{\eta}^{l}. 
\end{equation}
Construct functions 
\[
\Psi^{l}(y):=\pm\tilde{\eta}^{l}\cdot\nabla(v-\widetilde{\varphi})+A|y|^{2}-B_{A}\bigg(v-\frac{1}{2}\underline{v}+(C_{5}+1)y_{n}\bigg)\quad \text{in}\ \overline{\omega},  
\]
for $l=1,2,\dots,n-1$, where positive constants $A$ and $B_{A}$ are to be determined, and $C_{5}:=\sup_{\omega}|D(v-\underline{v}/2)|$.  

First, we choose $A$ large so that the functions $\Psi^{l}$ cannot attain its maximum in $\omega$ for $l=1,2,\dots,n-1$. 
Fix $l\in\{1,2,\dots,n-1\}$, and suppose, for contradiction, that $\Psi^{l}$ attains its maximum at an interior point $y_{*}\in\omega$. 
Then at the point $y_{*}$, we have 
\begin{equation}\label{eqn6.36}
\partial_{n}\Psi^{l}=0\quad \text{and}\quad F^{ij}\big[D^{2}v\big]\Psi^{l}_{ij}\leq0. 
\end{equation}
From \eqref{eqn6.36}, at the point $y_{*}$, 
\[
\pm\partial_{n}\Big(\tilde{\eta}^{l}\cdot\nabla(v-\widetilde{\varphi})\Big)=B_{A}\bigg(\bigg(v-\frac{1}{2}\underline{v}\bigg)_{n}+C_{5}+1\bigg)-2Ay_{n}\geq B_{A}-2A, 
\]
which yields $|D^{2}v(y_{*})|>1$ by taking $B_{A}$ large depending on $A$. 
Since $\lambda(D^{2}v(y_{*}))\in\Gamma_{2}$, 
\begin{equation}\label{eqn6.37}
\sigma_{1}\big[D^{2}v(y_{*})\big]\geq\left|D^{2}v(y_{*})\right|>1. 
\end{equation}
For convenience, introduce a coordinate system $z$, obtained by an orthogonal transformation of the $y$-coordinates, such that at the point $y_{*}$ in the $z$-coordinates, 
\[
D_{z}^{2}v\ \ \text{is diagonal, which implies}\ \ F^{ij}_{z}\big[D^{2}_{z}v\big]\ \ \text{is also diagonal}. 
\]
Computing \eqref{eqn6.36} at the point $y_{*}$ in the $z$-coordinates, by the concavity and homogeneity of $F$ and \eqref{eqn6.39}, we obtain 
\begin{align}
0&\geq F^{ij}_{z}\big[D^{2}_{z}v\big]\Psi^{l}_{z_{i}z_{j}}\nonumber\\  
&\geq \pm F^{ij}_{z}\big[D^{2}_{z}v\big]\left(\tilde{\eta}^{l}_{z}\cdot\nabla_{z}(v-\widetilde{\varphi})\right)_{z_{i}z_{j}}+2A\operatorname{tr}F^{ij}\big[D^{2}v\big]\nonumber\\
&\hspace{4.5mm}+B_{A}\bigg(\frac{1}{2}F\big[D^{2}\underline{v}\big]-F\big[D^{2}v\big]\bigg)\nonumber\\
&\geq\pm F^{ij}_{z}\big[D^{2}_{z}v\big]\left(\tilde{\eta}^{l}_{z}\cdot\nabla_{z} v_{z_{i}z_{j}}+2\partial_{z_{j}}\tilde{\eta}^{l}_{z}\cdot\nabla_{z} v_{z_{i}}\right)+(2A-C)\operatorname{tr}F^{ij}\nonumber\\
&=\pm\tilde{\eta}^{l}\cdot\nabla\left(\tilde{g}^{\frac{p}{n}}\right)\pm2\sum_{i=1}^{n}F^{ii}_{z}\big[D^{2}_{z}v\big]v_{z_{i}z_{i}}\times\partial_{z_{i}}\tilde{\eta}^{l,i}_{z}+(2A-C)\operatorname{tr}F^{ij}\nonumber\\
&\geq-\left|\frac{p}{n}\tilde{g}^{\frac{p}{n}-\frac{1}{2}}\frac{\tilde{\eta}^{l}\cdot\nabla\tilde{g}}{\sqrt{\tilde{g}}}\right|
-C\sum_{i=1}^{n}\left|F^{ii}_{z}\big[D^{2}_{z}v\big]v_{z_{i}z_{i}}\right|+(2A-C)\operatorname{tr}F^{ij},\label{eqn6.38}
\end{align}
where $\tilde{\eta}^{l,i}_{z}$ denotes the $i$-th component of $\tilde{\eta}^{l}$ in the $z$-coordinates, and the constant $C>0$ depends on $\tilde{\rho}$ and $\sup_{\omega}|D(v-\widetilde{\varphi})|$. 
Since $v_{z_{i}z_{i}}(y_{*})\geq0$ for all $1\leq i\leq n$ by the convexity of $v$, using the homogeneity and ellipticity of $F$, we have 
\begin{equation}\label{eqn9.8}
\tilde{f}^{\frac{1}{n}}=\sum_{j=1}^{n}F^{jj}_{z}\big[D^{2}_{z}v\big]v_{z_{j}z_{j}}\geq \left|F^{ii}_{z}\big[D^{2}_{z}v\big]v_{z_{i}z_{i}}\right|\quad \text{for}\  i=1,2,\dots,n,  
\end{equation}
at the point $y_{*}$ in the $z$-coordinates. 
Combining \eqref{eqn6.38}, \eqref{eqn9.8}, Lemma \ref{thm2.2}, \eqref{eqn2.11}, and \eqref{eqn6.37}, we derive at the point $y_{*}$, 
\begin{align*}
0&\geq-C_{6}\tilde{f}^{\frac{1}{4(n-1)}-\frac{1}{n(n-1)}}-C\tilde{f}^{\frac{1}{n}}+\frac{2A-C}{C_{7}}\sigma_{1}^{\frac{1}{n-1}}\big[D^{2}v\big]\tilde{f}^{\frac{-1}{n(n-1)}}\\
&\geq \frac{2A-C}{C_{7}}\tilde{f}^{\frac{-1}{n(n-1)}}-C_{6}\tilde{f}^{\frac{1}{4(n-1)}-\frac{1}{n(n-1)}}-C\tilde{f}^{\frac{1}{n}}\\
&>0 
\end{align*}
by taking $A$ large, which is a contradiction, where $C_{7}>0$ is universal, and $C_{6}$ depends on $\|\tilde{g}\|_{C^{1,1}(\overline{\omega})}$ .  
Thus $\Psi^{l}$ cannot attain its maximum in $\omega$ for $l=1,2,\dots,n-1$.

Second, we choose $B_{A}$ large, depending on $A$, so that  
\[
\Psi^{l}\leq0\quad \text{on}\ \partial\omega,\quad \forall1\leq l\leq n-1. 
\]
Fix $l\in\{1,2,\dots,n-1\}$, it follows from \eqref{eqn6.25} that 
\begin{equation}\label{eqn6.40}
\Psi^{l}\leq \pm\tilde{\eta}^{l}\cdot\nabla(v-\widetilde{\varphi})+A\big|y'\big|^{2}-\frac{1}{2}B_{A}(v+y_{n}).
\end{equation}
Combining \eqref{eqn6.26} and \eqref{eqn6.28} yields 
\[
v+y_{n}\geq\frac{1}{2}\big|y'\big|^{2}+O\Big(\big|y'\big|^{3}\Big)\geq\frac{1}{4}\big|y'\big|^{2}\quad \text{on}\ \partial_{1}\omega, 
\] 
which implies 
\[
\Psi^{l}\leq A\big|y'\big|^{2}-\frac{1}{8}B_{A}\big|y'\big|^{2}\leq0\quad \text{on}\ \partial_{1}\omega,  
\]
by taking $B_{A}$ large. 
It follows from \eqref{eqn6.27} that  
\begin{equation}\label{eqn6.41}
v+y_{n}\geq\min\left\{\frac{1}{4}\beta^{2},\varepsilon_{0}\beta^{2}\right\}=\varepsilon_{0}\beta^{2}\quad \text{on}\ \partial_{2}\omega\cup\partial_{3}\omega. 
\end{equation}
Then using \eqref{eqn6.40} and \eqref{eqn6.41}, we have 
\begin{align*}
\Psi^{l}\leq C+A-\frac{1}{2}B_{A}\varepsilon_{0}\beta^{2}\leq0\quad \text{on}\ \partial_{2}\omega\cup\partial_{3}\omega, 
\end{align*}
by taking $B_{A}$ large, where $C$ depends on $\tilde{\rho}$ and $\sup_{\omega}|D(v-\widetilde{\varphi})|$. 

By the comparison principle, 
\[
\Psi^{l}\leq0\quad \text{in}\ \overline{\omega},\quad \forall1\leq l\leq n-1. 
\]
Since $\Psi^{l}(0)=0$, we get $\Psi^{l}_{n}(0)\leq0$, which yields 
\[
|v_{nl}(0)|\leq C,\quad \forall1\leq l\leq n-1, 
\]
where $C$ depends on $\Omega$, $\|\varphi\|_{C^{3,1}(\partial\Omega)}$, and $|f^{3/(2n-2)}|_{C^{2,1}(\overline{\Omega})}$.  
For any point 
\[
T^{-1}y_{0}=x_{0}\in\partial\Omega\cap\left\{\big|x'\big|\leq r_{0}\right\}\cap\left\{|x|\leq\theta_{k}\sqrt{b_{k}}\right\}, 
\]
where $\theta_{k}>0$ is chosen small so that $|x_{0}|\leq\theta_{i}\sqrt{b_{i}}/2$ for all $k+1\leq i\leq n-1$, let $v(y_{0})=0$, $\nabla v(y_{0})=0$ by subtracting an affine function, and let $y_{0}$ be the origin, $\tilde{\nu}(y_{0})=e_{n}$, $\tilde{\eta}^{l}(y_{0})=e_{l}$ for all $1\leq l\leq n-1$ by translation and rotation. 
With similar arguments as above, we derive 
\[
\left|v_{\tilde{\nu}\tilde{\eta}^{l}}(y_{0})\right|\leq C,\quad \forall1\leq  l\leq n-1,  
\]
where $C$ depends on $\Omega$, $\|\varphi\|_{C^{3,1}(\partial\Omega)}$, and $|f^{3/(2n-2)}|_{C^{2,1}(\overline{\Omega})}$.
Pulling back to the $x$-coordinates, we obtain from \eqref{eqn9.9} that 
\begin{align*}
C\geq M^{2}\left|\tilde{\nu}'D^{2}_{y}u(y_{0})\tilde{\eta}^{l}\right|&=M^{2}\left|\big(T^{-1}\tilde{\nu}\big)'D^{2}_{x}u(x_{0})T^{-1}\tilde{\eta}^{l}\right|\\
&=M^{2}\left|u_{\nu\eta^{l}}(x_{0})\right|\frac{1}{|T\nu|}\frac{1}{M_{l}} 
\end{align*}
for $l=1,2,\dots,n-1$, where $\tilde{\nu}=T\nu/|T\nu|$.  
Thus by \eqref{eqn6.42}, we have 
\[
\left|u_{\nu\eta^{l}}(x_{0})\right|\leq C\frac{|T\nu|M_{l}}{M^{2}}\leq C\frac{1}{\sqrt{M}}=C\sqrt{b_{k}},\quad \forall1\leq l\leq k, 
\]
where $C$ depends on $\Omega$, $\|\varphi\|_{C^{3,1}(\partial\Omega)}$, and $|f^{3/(2n-2)}|_{C^{2,1}(\overline{\Omega})}$.
This completes the proof of Lemma \ref{thm6.6}. 
\end{proof}

\begin{theorem}\label{thm6.7}
Let $u\in C^{3,1}(\overline{\Omega})$ be a convex solution of equation \eqref{eqn1.1} with $k=n$, 
$\partial\Omega\in C^{3,1}$ be uniformly convex, 
$\varphi\in C^{3,1}(\partial\Omega)$, $\inf_{\Omega}f>0$, and $f^{3/(2n-2)}\in C^{2,1}(\overline{\Omega})$.   
Then 
\[
\sup_{\partial\Omega}|u_{\nu\nu}|\leq C,
\]
where $C$ depends on $\Omega$, $\|\varphi\|_{C^{3,1}(\partial\Omega)}$, and  $\|f^{3/(2n-2)}\|_{C^{2,1}(\overline{\Omega})}$, but is independent of $\inf_{\Omega}f$. 
\end{theorem}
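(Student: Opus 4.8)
The plan is to reduce, at an arbitrary boundary point, the bound on $u_{\nu\nu}$ to the determinant identity combined with the lower bound for $\prod b_i$ from Lemma \ref{thm6.3} and the tangential--normal bounds of Lemmas \ref{thm6.4} and \ref{thm6.6}. First I would fix $x_0\in\partial\Omega$ and adopt the normalization of this section, so that $x_0$ is the origin, $\nu(0)=e_n$, $\partial\Omega$ is represented by \eqref{eqn6.10}, $u(0)=\nabla u(0)=0$, and $D^2_{x'x'}u(0)=\operatorname{diag}(b_1,\dots,b_{n-1})$ with $0<b_1\leq\cdots\leq b_{n-1}$; the $b_i$ are positive because $f(0)\geq\inf_\Omega f>0$ forces $D^2u(0)$ to be positive definite. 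Since $u$ is convex, $u_{\nu\nu}(0)=u_{nn}(0)\geq0$, so it suffices to bound $u_{nn}(0)$ from above by a constant depending only on $\Omega$, $\|\varphi\|_{C^{3,1}(\partial\Omega)}$, and $\|f^{3/(2n-2)}\|_{C^{2,1}(\overline{\Omega})}$. Expanding $\det D^2u(0)=f(0)$ via the Schur complement of the invertible block $\operatorname{diag}(b_i)$ gives
\[
\Big(\prod_{i=1}^{n-1}b_i\Big)\Big(u_{nn}(0)-\sum_{i=1}^{n-1}\frac{u_{in}(0)^2}{b_i}\Big)=f(0),
\qquad\text{so}\qquad
u_{nn}(0)=\frac{f(0)}{\prod_{i=1}^{n-1}b_i}+\sum_{i=1}^{n-1}\frac{u_{in}(0)^2}{b_i}.
\]

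Lemma \ref{thm6.3} bounds the first summand by $1/\gamma_0$, so the remaining task is to show $|u_{in}(0)|\leq C\sqrt{b_i}$ for every $i\in\{1,\dots,n-1\}$. Here I would use that at the origin $\eta^i(0)=e_i$ (since $\rho_l(0)=0$ by \eqref{eqn6.10}), hence $u_{\nu\eta^i}(0)=u_{in}(0)$, and prove $|u_{\nu\eta^i}(0)|\leq C\sqrt{b_i}$ by a finite downward induction on $i$. The base case $i=n-1$ is Lemma \ref{thm6.4} applied at $x=0$ (which satisfies $|0|\leq\tfrac{1}{2}\sqrt{b_{n-1}}$): up to the harmless normalization $|\eta^j|\in[1,2]$, it gives $|u_{\nu\eta^j}(0)|\leq C\sqrt{b_{n-1}}$ for all $j\leq n-1$, i.e., the conclusion of Assumption \ref{thm6.5} for the index $i=n-1$ with $\theta_{n-1}=\tfrac{1}{2}$. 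For the inductive step, if the conclusion of Assumption \ref{thm6.5} (the neighborhood version, $|u_{\nu\eta^j}(x)|\leq C\sqrt{b_i}$ for $|x|\leq\theta_i\sqrt{b_i}$ and $j\leq i$) holds for all indices $i\in\{k+1,\dots,n-1\}$ --- which is exactly Assumption \ref{thm6.5} for this $k$ --- then Lemma \ref{thm6.6} produces $\theta_k>0$ and $C$ such that \eqref{eqn6.24} holds, i.e., the same conclusion for the index $i=k$. Iterating from $k=n-2$ down to $k=1$ yields, for every $i\in\{1,\dots,n-1\}$, a constant $\theta_i>0$ with $|u_{\nu\eta^i}(x)|\leq C\sqrt{b_i}$ whenever $|x|\leq\theta_i\sqrt{b_i}$; evaluating at $x=0$ gives $|u_{in}(0)|\leq C\sqrt{b_i}$. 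Plugging these into the identity above yields $u_{nn}(0)\leq 1/\gamma_0+(n-1)C^2$. Since $x_0$ was an arbitrary boundary point and no constant depends on it, this gives $\sup_{\partial\Omega}u_{\nu\nu}\leq C$, and together with $u_{\nu\nu}\geq0$ (convexity of $u$) it gives $\sup_{\partial\Omega}|u_{\nu\nu}|\leq C$ with the asserted dependence.

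The genuinely hard parts are already carried out in Lemmas \ref{thm6.3} and \ref{thm6.6}: the blow-up/affine rescaling of Guan-Trudinger-Wang, the semiconvexity of the rescaled right-hand side $\tilde g$ (via Lemma \ref{thm2.2} and the convexity of $v$), and the barrier constructions in the rescaled one-sided neighborhood $\omega$ --- the last being delicate because the convexity of $\partial\widetilde{\Omega}$ is not under control in the ``heavy'' tangential directions, which is why the barrier $\underline v$ and the auxiliary functions $\Psi^l$ must be coupled with the tangential--normal estimates supplied by Assumption \ref{thm6.5}. Granting those lemmas, the only point to watch in the present argument is that the constants generated at each of the finitely many induction steps continue to depend solely on $\Omega$, $\|\varphi\|_{C^{3,1}(\partial\Omega)}$, and $\|f^{3/(2n-2)}\|_{C^{2,1}(\overline{\Omega})}$; this is clear since each step multiplies the previous constant by a factor of that same type, and $n$ is fixed.
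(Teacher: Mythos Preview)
Your proposal is correct and follows essentially the same route as the paper: expand $\det D^2u(0)=f(0)$ to get $u_{nn}(0)=f(0)/\prod b_i+\sum u_{in}(0)^2/b_i$, bound the first term by Lemma~\ref{thm6.3}, and bound each $|u_{in}(0)|$ by $C\sqrt{b_i}$ via the downward induction started by Lemma~\ref{thm6.4} (which furnishes Assumption~\ref{thm6.5} for $k=n-2$) and driven by Lemma~\ref{thm6.6}. The paper's proof is exactly this, written slightly more tersely.
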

\begin{proof}
At the origin, we have 
\[
u_{nn}(0)\prod_{i=1}^{n-1}b_{i}-\sum_{j=1}^{n-1}\bigg(\prod_{i\neq j}b_{i}\bigg)u_{nj}^{2}(0)=f(0). 
\]
Lemma \ref{thm6.4} ensures that Assumption \ref{thm6.5} holds for $k=n-2$. 
Thus using Lemma \ref{thm6.6}, by induction we obtain 
\begin{equation}\label{eqn6.45}
u_{ni}(0)\leq C\sqrt{b_{i}}\quad \text{for}\ i=1,2,\dots,n-2.
\end{equation}
Then it follows from Lemma \ref{thm6.3}, Lemma \ref{thm6.4}, and \eqref{eqn6.45} that 
\[
0\leq u_{nn}(0)\leq\frac{1}{\prod_{i=1}^{n-1}b_{i}}f(0)+\sum_{j=1}^{n-1}\frac{1}{b_{j}}u_{nj}^{2}(0)\leq C, 
\]
where $C$ depends on $\Omega$, $\|\varphi\|_{C^{3,1}(\partial\Omega)}$, and  $\|f^{3/(2n-2)}\|_{C^{2,1}(\overline{\Omega})}$. 
This completes the proof of Theorem \ref{thm6.7}. 
\end{proof}
\par
\vspace{2mm}

Combining Lemmas \ref{thm7.1}, \ref{thm7.2}, \ref{thm1.2}, and Theorem \ref{thm6.7}, we obtain the $C^{2}$ estimate independent of $\inf_{\Omega_{0}}f$, and then Theorem \ref{thm10.1} follows from an approximation argument.

\section{The weakly interior estimate}

We recall the following comparison principle established by Ivochkina-Trudinger-Wang \cite{Wang2004}, and provide a proof for completeness.

\begin{lemma}[Lemma 3.1 of \cite{Wang2004}]\label{thm3.1}
Let $w(x,y)$, $v(x,y)$ be functions in $\overline{\Omega}\times\mathbb{R}^{N}$ that are both $p$-homogeneous in $y\in\mathbb{R}^{N}$ for some $p\in\mathbb{R}$.
Let $w$, $v\in C^{2}(\Omega\times\mathbb{S}^{N-1})\cap C^{0}(\overline{\Omega}\times\mathbb{S}^{N-1})$ with $v>0$ in $\overline{\Omega}\times\mathbb{S}^{N-1}$.
Define the linear operator $\mathscr{L}$ by
\[
\mathscr{L}(w):=\operatorname{tr}\big(\widetilde{G}D^{2}w\big)+\langle b,Dw\rangle,
\]
where $\widetilde{G}=\widetilde{G}(x,y)$ is an $(n+N)\times(n+N)$ positive semi-definite matrix and $b=b(x,y)$ is an $(n+N)$-dimensional vector.
If there exist two positive constants $\mu_{1}$, $\mu_{2}$ such that
\begin{equation}\label{eqn3.1}
\begin{cases}
\mathscr{L}(w)\geq-\mu_{1}\quad \text{in}\ \Omega\times\mathbb{S}^{N-1},\\
\mathscr{L}(v)\leq-\mu_{2}\quad \text{in}\ \Omega\times\mathbb{S}^{N-1},
\end{cases}
\end{equation}
then 
\[
\sup_{\overline{\Omega}\times\mathbb{S}^{N-1}}\frac{w}{v}\leq\frac{\mu_{1}}{\mu_{2}}+\sup_{\partial\Omega\times\mathbb{S}^{N-1}}\frac{w}{v}.
\]
\end{lemma}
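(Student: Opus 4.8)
The plan is to run a maximum principle for the quotient $w/v$, exploiting the $p$-homogeneity in $y$ to pass freely between the sphere bundle $\overline{\Omega}\times\mathbb{S}^{N-1}$, where the hypotheses and the conclusion are stated, and the open set $\Omega\times(\mathbb{R}^{N}\setminus\{0\})$, where $\mathscr{L}$ is genuinely a differential operator in all $n+N$ variables.

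First I would set up the reduction. Since $w$ and $v$ are $p$-homogeneous and $v>0$, the quotient $\Phi:=w/v$ is $0$-homogeneous in $y$, hence $\sup_{\overline{\Omega}\times\mathbb{S}^{N-1}}\Phi=\sup_{\overline{\Omega}\times(\mathbb{R}^{N}\setminus\{0\})}\Phi$, and likewise over the boundary. Write $\Lambda:=\sup_{\partial\Omega\times\mathbb{S}^{N-1}}\Phi$ and $M:=\sup_{\overline{\Omega}\times\mathbb{S}^{N-1}}\Phi$. If $M$ is attained on $\partial\Omega\times\mathbb{S}^{N-1}$ the bound $M=\Lambda\le\mu_{1}/\mu_{2}+\Lambda$ is immediate (recall $\mu_{1}/\mu_{2}>0$), so we may assume $M>\Lambda$ and hence $M$ is attained at an interior point $(x_{0},y_{0})$ with $x_{0}\in\Omega$ and, after scaling, $|y_{0}|=1$.

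Second — and this is the step that really uses the homogeneity — I would observe that $(x_{0},y_{0})$ is an honest interior maximum of $\Phi$ viewed as a $C^{2}$ function on the open set $\Omega\times(\mathbb{R}^{N}\setminus\{0\})$: near $(x_{0},y_{0})$ one has $\Phi(x,y)=\Phi(x,y/|y|)\le M$, with equality along the whole ray $\{(x_{0},ty_{0}):t>0\}$. Consequently $D\Phi(x_{0},y_{0})=0$ and $D^{2}\Phi(x_{0},y_{0})$ is negative semi-definite, in the full $(x,y)$ variables. From $D\Phi=0$ one gets $Dw=M\,Dv$ at $(x_{0},y_{0})$, and substituting this into the Leibniz expansion of $D^{2}(w/v)$ collapses it, at that point, to $D^{2}\Phi=\tfrac{1}{v}\bigl(D^{2}w-M\,D^{2}v\bigr)$; hence $D^{2}w-M\,D^{2}v$ is negative semi-definite at $(x_{0},y_{0})$.

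Third, I would plug this into $\mathscr{L}$. By linearity, at $(x_{0},y_{0})$,
\[
\mathscr{L}(w)-M\,\mathscr{L}(v)=\operatorname{tr}\!\bigl(\widetilde{G}\,(D^{2}w-M\,D^{2}v)\bigr)+\langle b,\,Dw-M\,Dv\rangle .
\]
The second term vanishes since $Dw=M\,Dv$ there, and the first is $\le0$ because $\widetilde{G}$ is positive semi-definite and $D^{2}w-M\,D^{2}v$ is negative semi-definite (write $\operatorname{tr}(\widetilde{G}N)=\operatorname{tr}(\widetilde{G}^{1/2}N\widetilde{G}^{1/2})$, the trace of a negative semi-definite matrix). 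Thus $\mathscr{L}(w)(x_{0},y_{0})\le M\,\mathscr{L}(v)(x_{0},y_{0})$. Since $(x_{0},y_{0})\in\Omega\times\mathbb{S}^{N-1}$ the hypotheses apply there, so $-\mu_{1}\le \mathscr{L}(w)(x_{0},y_{0})\le M\,\mathscr{L}(v)(x_{0},y_{0})$; dividing by $\mathscr{L}(v)(x_{0},y_{0})\le-\mu_{2}<0$ yields $M\le\mu_{1}/\mu_{2}\le\mu_{1}/\mu_{2}+\Lambda$, which is the asserted inequality.

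I expect the main obstacle to be the second step: justifying that the constrained maximum on the sphere bundle produces a vanishing \emph{ambient} gradient and a negative semi-definite \emph{ambient} Hessian of $\Phi$. One cannot simply invoke the manifold first- and second-order conditions on $\Omega\times\mathbb{S}^{N-1}$, since $\mathscr{L}$ sees the radial and radial--tangential $y$-directions as well; it is precisely the $0$-homogeneity of $\Phi$ that closes this gap, by letting $\Phi$ be replaced near $(x_{0},y_{0})$ by the manifestly ray-constant function $\Phi(x,y/|y|)$ on a full ambient neighborhood. A subsidiary computation requiring care is the reduction of $D^{2}(w/v)$ at a critical point to $\tfrac1v(D^{2}w-M\,D^{2}v)$, together with the nonpositivity of $\operatorname{tr}\bigl(\widetilde{G}(D^{2}w-M\,D^{2}v)\bigr)$.
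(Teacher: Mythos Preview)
Your proposal is correct and follows essentially the same approach as the paper's proof. Both arguments use the $0$-homogeneity of $w/v$ to promote the constrained maximum on $\overline{\Omega}\times\mathbb{S}^{N-1}$ to a genuine interior maximum on an open set in $\Omega\times\mathbb{R}^{N}$ (the paper works on $\Omega\times(B_{2}\setminus B_{1/2})$), then apply the first- and second-order conditions and feed them into $\mathscr{L}$; your simplification $D^{2}\Phi=\tfrac{1}{v}(D^{2}w-M\,D^{2}v)$ at the critical point is exactly what the paper obtains after using its equation $\partial_{i}(w/v)=0$.
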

\begin{proof}
Since $w$, $v$ are $p$-homogeneous in $y\in\mathbb{R}^{N}$, we have 
\[
\frac{w}{v}(x,y)\in C^{2}(Q)\cap C^{0}(\overline{Q})\quad \text{and}\quad \frac{w}{v}(x_{0},y_{0})=\sup_{\overline{Q}}\frac{w}{v},
\]
where $Q:=\Omega\times(B_{2}\setminus B_{1/2})$ and  $(x_{0},y_{0})\in\overline{\Omega}\times\mathbb{S}^{N-1}$. 
We only need to consider the case of $(x_{0},y_{0})\in\Omega\times\mathbb{S}^{N-1}$ with $w(x_{0},y_{0})>0$.
At the maximum point $(x_{0},y_{0})$,
\begin{equation}\label{eqn3.2}
\partial_{i}\Big(\frac{w}{v}\Big)=\frac{\partial_{i}w}{v}-\frac{w\partial_{i}v}{v^{2}}=0,\quad \forall1\leq i\leq n+N,
\end{equation}
and the Hessian 
\[
\partial_{ij}\Big(\frac{w}{v}\Big)=\frac{\partial_{ij}w}{v}-\frac{\partial_{i}w\partial_{j}v+\partial_{j}w\partial_{i}v}{v^{2}}+\frac{2w\partial_{i}v\partial_{j}v}{v^{3}}-\frac{w\partial_{ij}v}{v^{2}} 
\]
is an $(n+N)\times(n+N)$ negative semi-definite matrix. 
Then at the point $(x_{0},y_{0})$,
\begin{align*}
0\geq\widetilde{G}^{ij}\partial_{ij}\Big(\frac{w}{v}\Big)&=\widetilde{G}^{ij}\left(\frac{\partial_{ij}w}{v}-\frac{w\partial_{ij}v}{v^{2}}+\left(\frac{2w\partial_{i}v}{v^{3}}-\frac{2\partial_{i}w}{v^{2}}\right)\partial_{j}v\right)\\
&\overset{\eqref{eqn3.2}}{=}\widetilde{G}^{ij}\left(\frac{\partial_{ij}w}{v}-\frac{w\partial_{ij}v}{v^{2}}\right)\\
&=\frac{\mathscr{L}(w)-b^{i}\partial_{i}w}{v}-\frac{w}{v^{2}}\left(\mathscr{L}(v)-b^{i}\partial_{i}v\right)\\
&\overset{\eqref{eqn3.2}}{=}\frac{1}{v}\mathscr{L}(w)-\frac{w}{v^{2}}\mathscr{L}(v)\\
&\overset{w,v>0}{\geq}\frac{1}{v}\left(-\mu_{1}+\frac{w}{v}\mu_{2}\right).
\end{align*}
This yields 
\[
\frac{w}{v}(x_{0},y_{0})\leq\frac{\mu_{1}}{\mu_{2}}.
\]
The proof of Lemma \ref{thm3.1} is complete. 
\end{proof}
\par
\vspace{2mm}

We now proceed to prove the \textit{weakly interior estimate}. 
\par
\vspace{2mm}

\begin{proof}[Proof of Theorem \ref{thm1.1}]
We will apply Lemma \ref{thm3.1} with $N=n+1$ and coordinates  $(x,\xi,\eta)\in\overline{\Omega}\times\mathbb{R}^{n}\times\mathbb{R}$.  
Define functions
\[
w:=u_{\xi\xi}+2\eta u_{\xi}+\eta^{2}u
\]
and
\[
v:=\frac{1}{4\alpha}(\psi+\beta)^{1-\alpha}|\xi|^{2}+\frac{\psi_{\xi}^{2}}{(\psi+\beta)^{\alpha}}+\varrho(\psi+\beta)\eta^{2},
\]
where $0<\alpha<1$, $0<\varrho<1$, and $0<\beta\ll1$ are parameters to be determined; and the function $\psi$ is given in Lemma \ref{thm2.4}.
Define a linear operator
\[
\mathscr{L}(w):=\operatorname{tr}\big(\widetilde{G}D^{2}w\big)+\langle b,Dw\rangle,
\]
where 
\begin{equation*}
\widetilde{G}:=
\begin{pmatrix}
G &rG &Gq\\
rG &r^{2}G &rGq\\
q'G &rq'G &q'Gq
\end{pmatrix}
,\quad b:=
\begin{pmatrix}
0_{n\times1}\\
-2Gq\\
0_{1\times1}
\end{pmatrix}
,
\end{equation*}
with $G$ being the $n\times n$ matrix-valued function defined in \eqref{eqn2.6}. 
The scalar function $r$ and the $n$-dimensional vector-valued function $q$ are defined by
\begin{equation}\label{eqn3.14}
r:=\frac{\alpha\psi_{\xi}}{\psi+\beta},\quad  q:=\frac{1}{\psi+\beta}\left(\frac{\xi}{4}+\frac{\alpha\psi_{\xi}}{\psi+\beta}\nabla\psi\right).
\end{equation}
It holds that the quadratic form  
\[
\left(\xi_{1},\xi_{2},\eta\right)^{T}\widetilde{G}\left(\xi_{1},\xi_{2},\eta\right)
=\left(\xi_{1}+r\xi_{2}+\eta q\right)^{T}G\left(\xi_{1}+r\xi_{2}+\eta q\right)\geq0
\]
is valid for all $(\xi_{1},\xi_{2},\eta)\in\mathbb{R}^{n}\times\mathbb{R}^{n}\times\mathbb{R}$, 
which implies that $\widetilde{G}$ is positive semi-definite. 
Due to the substantial computations involved, we divide the proof into five steps for clarity.

\par
\vspace{2mm}
\textbf{Step 1.}
For convenience, denote
\[
\hat{v}:=\frac{1}{4\alpha}(\psi+\beta)^{1-\alpha}|\xi|^{2}+\frac{\psi_{\xi}^{2}}{(\psi+\beta)^{\alpha}}.
\]
We claim that 
\begin{equation}\label{eqn3.3}
\mathscr{L}(\hat{v})\leq -\frac{1}{C_{\psi}}\frac{1}{8\alpha}|\xi|^{2} 
\end{equation}
in $\Omega\times\mathbb{R}^{n}\times\mathbb{R}$ for all $\alpha\in(0,\alpha_{0})$, where constants $C_{\psi}$ and $\alpha_{0}$ depend on  $\|\psi\|_{C^{3}(\overline{\Omega})}$ to be determined. 
It holds that 
\[
\widetilde{G}D^{2}\hat{v}=
\begin{pmatrix}
G &rG &Gq\\
rG &r^{2}G &rGq\\
q'G &rq'G &q'Gq
\end{pmatrix}
\begin{pmatrix}
\partial_{x_{i}x_{j}} &\partial_{x_{i}\xi_{j}} &0\\
\partial_{\xi_{i}x_{j}} &\partial_{\xi_{i}\xi_{j}} &0\\
0 &0 &0
\end{pmatrix}
[\hat{v}],
\]
where $1\leq i,\ j\leq n$. 
Since $G$ is symmetric, we derive 
\begin{equation}\label{eqn3.4}
\mathscr{L}(\hat{v})=G^{ij}\partial_{x_{i}x_{j}}\hat{v}+2rG^{ij}\partial_{x_{i}\xi_{j}}\hat{v}+r^{2}G^{ij}\partial_{\xi_{i}\xi_{j}}\hat{v}-2G^{ij}q_{j}\partial_{\xi_{i}}\hat{v}.
\end{equation}
Compute derivatives 
\begin{align}
\partial_{x_{i}}\hat{v}&
=\frac{1}{(\psi+\beta)^{\alpha}}\left(\frac{1-\alpha}{4\alpha}|\xi|^{2}\psi_{x_{i}}+2\psi_{\xi}\psi_{\xi x_{i}}-\alpha\frac{\psi_{\xi}^{2}\psi_{x_{i}}}{\psi+\beta}\right);\nonumber\\
\partial_{x_{i}x_{j}}\hat{v}&
=\frac{1}{(\psi+\beta)^{\alpha}}\left(\frac{1-\alpha}{4\alpha}|\xi|^{2}\psi_{x_{i}x_{j}}+2\psi_{\xi x_{i}}\psi_{\xi x_{j}}+2\psi_{\xi}\psi_{\xi x_{i}x_{j}}\right)\label{eqn3.5}\\
&\hspace{4.5mm}+\frac{1}{(\psi+\beta)^{\alpha+1}}\left(\frac{\alpha-1}{4}|\xi|^{2}\psi_{x_{i}}\psi_{x_{j}}-2\alpha\psi_{\xi}\psi_{\xi x_{i}}\psi_{x_{j}}\right)\nonumber\\
&\hspace{4.5mm}+\frac{1}{(\psi+\beta)^{\alpha+1}}\left(-2\alpha\psi_{\xi}\psi_{x_{i}}\psi_{\xi x_{j}}-\alpha\psi_{\xi}^{2}\psi_{x_{i}x_{j}}\right)\nonumber\\
&\hspace{4.5mm}+\frac{1}{(\psi+\beta)^{\alpha+2}}\big(\alpha^{2}+\alpha\big)\psi_{\xi}^{2}\psi_{x_{i}}\psi_{x_{j}};\nonumber\\
r\partial_{x_{i}\xi_{j}}\hat{v}&
=\frac{1}{(\psi+\beta)^{\alpha+1}}\left(\frac{1-\alpha}{2}\psi_{\xi}\psi_{x_{i}}\xi_{j}+2\alpha\psi_{\xi}\psi_{\xi x_{i}}\psi_{x_{j}}+2\alpha\psi_{\xi}^{2}\psi_{x_{i}x_{j}}\right)\label{eqn3.6}\\
&\hspace{4.5mm}-\frac{1}{(\psi+\beta)^{\alpha+2}}2\alpha^{2}\psi_{\xi}^{2}\psi_{x_{i}}\psi_{x_{j}};\nonumber\\
\partial_{\xi_{i}}\hat{v}&
=\frac{(\psi+\beta)^{1-\alpha}}{2\alpha}\xi_{i}+(\psi+\beta)^{-\alpha}2\psi_{\xi}\psi_{x_{i}};\nonumber\\
q_{j}\partial_{\xi_{i}}\hat{v}&
=\frac{1}{(\psi+\beta)^{\alpha}}\frac{\xi_{i}\xi_{j}}{8\alpha}+\frac{1}{(\psi+\beta)^{\alpha+1}}\left(\frac{1}{2}\psi_{\xi}\xi_{i}\psi_{x_{j}}+\frac{1}{2}\psi_{\xi}\psi_{x_{i}}\xi_{j}\right)\label{eqn3.7}\\
&\hspace{4.5mm}+\frac{1}{(\psi+\beta)^{\alpha+2}}2\alpha\psi_{\xi}^{2}\psi_{x_{i}}\psi_{x_{j}};\nonumber\\
r^{2}\partial_{\xi_{i}\xi_{j}}\hat{v}&
=\frac{1}{(\psi+\beta)^{\alpha+1}}\frac{1}{2}\alpha\psi_{\xi}^{2}\delta_{ij}
+\frac{1}{(\psi+\beta)^{\alpha+2}}2\alpha^{2}\psi_{\xi}^{2}\psi_{x_{i}}\psi_{x_{j}}.\label{eqn3.8}
\end{align}
Combining \eqref{eqn3.4}--\eqref{eqn3.8} with the symmetry of $G$ yields that 
\begin{align*}
\mathscr{L}(\hat{v})&
=\frac{G^{ij}}{(\psi+\beta)^{\alpha}}\left(\frac{1-\alpha}{4\alpha}|\xi|^{2}\psi_{x_{i}x_{j}}+2\psi_{\xi x_{i}}\psi_{\xi x_{j}}+2\psi_{\xi}\psi_{\xi x_{i}x_{j}}-\frac{1}{4\alpha}\xi_{i}\xi_{j}\right)\\
&\hspace{4.5mm}+\frac{G^{ij}}{(\psi+\beta)^{\alpha+1}}\left(\frac{\alpha-1}{4}|\xi|^{2}\psi_{x_{i}}\psi_{x_{j}}-(1+\alpha)\psi_{\xi}\psi_{x_{i}}\xi_{j}\right)\\
&\hspace{4.5mm}+\frac{G^{ij}}{(\psi+\beta)^{\alpha+1}}\left(3\alpha\psi_{\xi}^{2}\psi_{x_{i}x_{j}}+ \frac{1}{2}\alpha\psi_{\xi}^{2}\delta_{ij} \right)\\
&\hspace{4.5mm}-\frac{G^{ij}}{(\psi+\beta)^{\alpha+2}}\big(\alpha^{2}+3\alpha\big)\psi_{\xi}^{2}\psi_{x_{i}}\psi_{x_{j}}.
\end{align*}
Since $\operatorname{tr}\left(GD^{2}\psi\right)\leq-1$ from \eqref{eqn2.15},  $\operatorname{tr}G=1$, and $G$ is positive definite, we get that
\begin{align}\label{eqn3.9}
\mathscr{L}(\hat{v})&
\leq\frac{1}{(\psi+\beta)^{\alpha}}\left(-\frac{1-\alpha}{4\alpha}|\xi|^{2}+
C_{1}\|\psi\|_{C^{3}(\overline{\Omega})}^{2}|\xi|^{2}
-\frac{1}{4\alpha}G^{ij}\xi_{i}\xi_{j}\right)\nonumber\\
&\hspace{4.5mm}+\frac{1}{(\psi+\beta)^{\alpha+1}}\left(\frac{\alpha-1}{4}|\xi|^{2}G^{ij}\psi_{x_{i}}\psi_{x_{j}}-(1+\alpha)\psi_{\xi}G^{ij}\psi_{x_{i}}\xi_{j}-\frac{5}{2}\alpha\psi_{\xi}^{2} \right)\nonumber\\
&\hspace{4.5mm}-\frac{G^{ij}}{(\psi+\beta)^{\alpha+2}}\big(\alpha^{2}+3\alpha\big)\psi_{\xi}^{2}\psi_{x_{i}}\psi_{x_{j}}\nonumber\\
&\leq\frac{1}{(\psi+\beta)^{\alpha}}\left(-\frac{1-\alpha}{4\alpha}+
C_{1}\|\psi\|_{C^{3}(\overline{\Omega})}^{2}\right)|\xi|^{2}\\
&\hspace{4.5mm}+\frac{G^{ij}}{(\psi+\beta)^{\alpha}}\left[-\frac{1}{4\alpha}\xi_{i}\xi_{j}-\frac{1+\alpha}{\psi+\beta}\psi_{\xi}\psi_{x_{i}}\xi_{j}-\frac{\alpha^{2}+3\alpha}{(\psi+\beta)^{2}}\psi_{\xi}^{2}\psi_{x_{i}}\psi_{x_{j}}\right],\nonumber
\end{align}
where the constant $C_{1}>0$ is universal.
It holds that
\begin{align}
0&\leq\sum_{s=1}^{n}\left[\sum_{i=1}^{n}\Big(G^{1/2}\Big)^{si}\left(\sqrt{\alpha^{2}+3\alpha}\frac{\psi_{\xi}}{\psi+\beta}\psi_{x_{i}}+\frac{1+\alpha}{2\sqrt{\alpha^{2}+3\alpha}}\xi_{i}\right)\right]^{2}\nonumber\\
&=\sum_{i,j=1}^{n}G^{ij}\left[\frac{\alpha^{2}+3\alpha}{(\psi+\beta)^{2}}\psi_{\xi}^{2}\psi_{x_{i}}\psi_{x_{j}}+\frac{1+\alpha}{\psi+\beta}\psi_{\xi}\psi_{x_{i}}\xi_{j}+\frac{(1+\alpha)^{2}}{4\alpha(3+\alpha)}\xi_{i}\xi_{j}\right].\label{eqn3.10}
\end{align}
By \eqref{eqn3.9} and \eqref{eqn3.10}, we obtain that 
\begin{align*}
\mathscr{L}(\hat{v})&\leq\frac{1}{(\psi+\beta)^{\alpha}}\left(-\frac{1-\alpha}{4\alpha}+ C_{\psi}\right)|\xi|^{2} +\frac{G^{ij}\xi_{i}\xi_{j}}{(\psi+\beta)^{\alpha}}\left[-\frac{1}{4\alpha}+\frac{(1+\alpha)^{2}}{4\alpha(3+\alpha)}\right]\\
&\leq\frac{1}{(\psi+\beta)^{\alpha}}\left(-\frac{1-\alpha}{4\alpha}+ C_{\psi}\right)|\xi|^{2}\\
&\leq -\frac{1}{\psi+1}\frac{1}{8\alpha}|\xi|^{2} 
\end{align*}
for all $\alpha\in(0,\alpha_{0})$ with $\alpha_{0}>0$ chosen sufficiently small, where constants $C_{\psi}$ and $\alpha_{0}$ depend on  $\|\psi\|_{C^{3}(\overline{\Omega})}$. 
This establishes claim \eqref{eqn3.3}.

\par
\vspace{2mm}
\textbf{Step 2.}
It holds that 
\[
\widetilde{G}D^{2}\left[(\psi+\beta)\eta^{2}\right]=
\begin{pmatrix}
G &rG &Gq\\
rG &r^{2}G &rGq\\
q'G &rq'G &q'Gq
\end{pmatrix}
\begin{pmatrix}
\partial_{x_{i}x_{j}} &0 &\partial_{x_{i}\eta}\\
0 &0 &0\\
\partial_{\eta x_{j}} &0 &\partial_{\eta\eta}
\end{pmatrix}
\left[(\psi+\beta)\eta^{2}\right],
\]
where $1\leq i,\ j\leq n$.
By the symmetry of $G$, we compute  
\begin{align*}
&\hspace{4.5mm}\mathscr{L}\big((\psi+\beta)\eta^{2}\big)\\
&=G^{ij}\left(\partial_{x_{i}x_{j}}+2q_{j}\partial_{\eta x_{i}}+q_{i}q_{j}\partial_{\eta\eta}\right)\left[(\psi+\beta)\eta^{2}\right]\\
&=\eta^{2}G^{ij}\psi_{x_{i}x_{j}}+ G^{ij}\left(\frac{1}{\psi+\beta}\eta\psi_{x_{i}}\xi_{j}+ \frac{1}{(\psi+\beta)^{2}}4\alpha\eta\psi_{\xi}\psi_{x_{i}}\psi_{x_{j}}\right)\\
&\hspace{4.5mm}+G^{ij}\left(\frac{1}{\psi+\beta}\frac{1}{8}\xi_{i}\xi_{j}+ \frac{1}{(\psi+\beta)^{2}}\alpha\psi_{\xi}\psi_{x_{i}}\xi_{j}+ \frac{1}{(\psi+\beta)^{3}}2\alpha^{2}\psi_{\xi}^{2}\psi_{x_{i}}\psi_{x_{j}}\right).
\end{align*}
Since $\operatorname{tr}\left(GD^{2}\psi\right)\leq-1$ from \eqref{eqn2.15},  $\operatorname{tr}G=1$, and $G$ is positive definite, we get that 
\begin{align}
\mathscr{L}\big((\psi+\beta)\eta^{2}\big)&\leq -\eta^{2}+\frac{1}{\beta^{2}}C_{\psi}|\eta||\xi|+\frac{1}{\beta^{3}}C_{\psi}|\xi|^{2}\nonumber\\
&\leq-\frac{1}{2}\eta^{2}+\frac{1}{2\beta^{4}}C_{\psi}^{2}|\xi|^{2}+\frac{1}{\beta^{3}}C_{\psi}|\xi|^{2}\nonumber\\
&\leq -\frac{1}{2}\eta^{2}+\frac{1}{\beta^{4}}C_{\psi}^{2}|\xi|^{2},\label{eqn3.11}
\end{align}
where the constant $C_{\psi}>0$ depends on $\|\psi\|_{C^{3}(\overline{\Omega})}$. 
Then it follows from \eqref{eqn3.3} and \eqref{eqn3.11} that
\begin{align}
\mathscr{L}(v)&\leq-\frac{1}{C_{\psi}}\frac{1}{8\alpha}|\xi|^{2}+\varrho\left(-\frac{1}{2}\eta^{2}+\frac{1}{\beta^{4}}C_{\psi}^{2}|\xi|^{2}\right)\nonumber\\
&= \left(-\frac{1}{8C_{\psi}}\frac{1}{\alpha}+C_{\psi}^{2}\right)|\xi|^{2}-\frac{1}{2}\beta^{4}\eta^{2}\nonumber\\
&\leq-\frac{1}{2}\beta^{4}\left(|\xi|^{2}+\eta^{2}\right) \label{eqn3.12}
\end{align}
in $\Omega\times\mathbb{R}^{n}\times\mathbb{R}$ for all $\alpha\in(0,\alpha_{0})$, where we set 
\[
\varrho:=\beta^{4}
\]
and $\alpha_{0}>0$ chosen sufficiently small depending on  $\|\psi\|_{C^{3}(\overline{\Omega})}$.  
Fix $\alpha\in(0,\alpha_{0})$ depending on $\|\psi\|_{C^{3}(\overline{\Omega})}$ in subsequent arguments. 

\par
\vspace{2mm}
\textbf{Step 3.}
It holds that 
\begin{align*}
\widetilde{G}D^{2}w=
\begin{pmatrix}
G &rG &Gq\\
rG &r^{2}G &rGq\\
q'G &rq'G &q'Gq
\end{pmatrix}
\begin{pmatrix}
\partial_{x_{i}x_{j}} &\partial_{x_{i}\xi_{j}} &\partial_{x_{i}\eta}\\
\partial_{\xi_{i}x_{j}} &\partial_{\xi_{i}\xi_{j}} &\partial_{\xi_{i}\eta}\\
\partial_{\eta x_{j}} &\partial_{\eta\xi_{j}} &\partial_{\eta\eta}
\end{pmatrix}
[w], 
\end{align*}
where $1\leq i,\ j\leq n$.
By the symmetry of $G$, we compute 
\begin{align*}
&\hspace{4.5mm}\mathscr{L}(w)\\
&=G^{ij}\left(\partial_{x_{i}x_{j}}+2r\partial_{x_{i}\xi_{j}}+2q_{j}\partial_{x_{i}\eta}+ r^{2}\partial_{\xi_{i}\xi_{j}}+2rq_{j}\partial_{\xi_{i}\eta}+ q_{i}q_{j}\partial_{\eta\eta}- 2q_{j}\partial_{\xi_{i}} \right)[w]\\ 
&=G^{ij}2q_{j}(\partial_{x_{i}\eta}w-\partial_{\xi_{i}}w)
+G^{ij}\big(u_{\xi\xi x_{i}x_{j}}+2\eta u_{\xi x_{i}x_{j}}+\eta^{2}u_{x_{i}x_{j}}\big)\\
&\hspace{4.5mm}+G^{ij}2r\big(2u_{\xi x_{i}x_{j}}+2\eta u_{x_{i}x_{j}}\big)+G^{ij}r^{2}2u_{x_{i}x_{j}}+G^{ij}2rq_{j}2u_{x_{i}}+G^{ij}q_{i}q_{j}2u\\
&=G^{ij}\big(u_{\xi\xi x_{i}x_{j}}+2\eta u_{\xi x_{i}x_{j}}+\eta^{2}u_{x_{i}x_{j}}\big)
+4rG^{ij}\big(u_{\xi x_{i}x_{j}}+\eta u_{x_{i}x_{j}}\big)\\
&\hspace{4.5mm}+2r^{2}G^{ij}u_{x_{i}x_{j}}+4rG^{ij}u_{x_{i}}q_{j}+2uG^{ij}q_{i}q_{j}.
\end{align*}
It follows from \eqref{eqn2.11}--\eqref{eqn2.14}, \eqref{eqn3.14}, and $0\leq G\leq\mathbf{I}_{n}$ that 
\begin{align}
\mathscr{L}(w)&\geq -K_{1}|\xi|^{2}\sigma_{1}^{\frac{-1}{k-1}}-2|\eta|K_{1}|\xi|\sigma_{1}^{\frac{-1}{k-1}}-4|r|K_{1}|\xi|\sigma_{1}^{\frac{-1}{k-1}}-4|r||\eta|K_{1}\nonumber\\
&\hspace{4.5mm}-4|r|\|u\|_{C^{1}(\overline{\Omega})}|q|-2\|u\|_{C^{0}(\overline{\Omega})}|q|^{2}\nonumber\\
&\geq -K_{1}\sigma_{1}^{\frac{-1}{k-1}}\left(6|\xi|^{2}+\eta^{2}+r^{2}\right)-K_{1}\left(4\eta^{2}+r^{2}\right)-4r^{2}\nonumber\\
&\hspace{4.5mm}-\left(\|u\|_{C^{1}(\overline{\Omega})}^{2}+2\|u\|_{C^{0}(\overline{\Omega})}\right)|q|^{2}\nonumber\\
&\geq -K_{1}\sigma_{1}^{\frac{-1}{k-1}}\left(6|\xi|^{2}+\eta^{2}+\frac{C_{\psi}}{\beta^{2}}|\xi|^{2}\right)-K_{1}\left(4\eta^{2}+\frac{C_{\psi}}{\beta^{2}}|\xi|^{2}\right)\nonumber\\
&\hspace{4.5mm}-4\frac{C_{\psi}}{\beta^{2}}|\xi|^{2}-\left(\|u\|_{C^{1}(\overline{\Omega})}^{2}+2\|u\|_{C^{0}(\overline{\Omega})}\right)\frac{C_{\psi}}{\beta^{4}}|\xi|^{2}\nonumber\\
&\geq-K\left(1+\sigma_{1}^{\frac{-1}{k-1}}\right)\frac{1}{\beta^{4}}\left(|\xi|^{2}+\eta^{2}\right),\label{eqn3.13}
\end{align}
where $K>0$ depends on $\|\psi\|_{C^{1}(\overline{\Omega})}$, $\|u\|_{C^{1}(\overline{\Omega})}$,  $\operatorname{dist}(\Omega,\partial\Omega_{0})$, and either $\|f^{1/(k-1)}\|_{C^{1,1}(\overline{\Omega_{0}})}$ or  $\|f^{3/(2k-2)}\|_{C^{2,1}(\overline{\Omega_{0}})}$, but is independent of $\inf_{\Omega_{0}}f$. 

\par
\vspace{2mm}
\textbf{Step 4.}
It holds that 
\begin{align}
w&=\left(\theta\nu+\tau\sqrt{|\xi|^{2}-\theta^{2}}\right)^{T}D^{2}u\left(\theta\nu+\tau\sqrt{|\xi|^{2}-\theta^{2}}\right)+2\eta u_{\xi}+\eta^{2}u\nonumber\\
&\leq\theta^{2}\sup_{\partial\Omega}u_{\nu\nu}+C\label{eqn3.15}
\end{align}
for all $(x,\xi,\eta)\in\partial\Omega\times\mathbb{S}^{n}$, where $\tau$ is a unit tangential direction, $\theta:=\langle \xi,\nu\rangle$, and the constant $C>0$ depends on $\|u\|_{C^{1}(\overline{\Omega})}$, second-order tangential and tangential-normal  derivatives of $u$ on $\partial\Omega$. 
Moreover, by \eqref{eqn2.15} we get that 
\begin{align}
v&=\frac{1}{4\alpha}(\psi+\beta)^{1-\alpha}|\xi|^{2}+\frac{\psi_{\xi}^{2}}{(\psi+\beta)^{\alpha}}+\beta^{4}(\psi+\beta)\eta^{2}\nonumber\\
&=\frac{\beta^{1-\alpha}}{4\alpha}|\xi|^{2}+\frac{1}{\beta^{\alpha}}(\left|\nabla\psi\right|\langle \nu,\xi\rangle)^{2}+\beta^{5}\eta^{2}\nonumber\\
&\geq\frac{1}{\beta^{\alpha}}\theta^{2}+\beta^{5}\big(|\xi|^{2}+\eta^{2}\big)\nonumber\\
&=\frac{1}{\beta^{\alpha}}\theta^{2}+\beta^{5}\label{eqn3.16}
\end{align}
for all $(x,\xi,\eta)\in\partial\Omega\times\mathbb{S}^{n}$.
Therefore, it follows from \eqref{eqn3.15} and \eqref{eqn3.16} that 
\[
\frac{w}{v}\leq \frac{\theta^{2}\sup_{\partial\Omega}u_{\nu\nu}+C}{\theta^{2}\beta^{-\alpha}+\beta^{5}}\leq \beta^{\alpha}\sup_{\partial\Omega}u_{\nu\nu}+\frac{C}{\beta^{5}}
\]
for all $(x,\xi,\eta)\in\partial\Omega\times\mathbb{S}^{n}$.
Thus 
\begin{equation}\label{eqn3.17}
\sup_{\partial\Omega\times\mathbb{S}^{n}}\frac{w}{v}\leq\beta^{\alpha}\sup_{\partial\Omega}u_{\nu\nu}+\frac{C}{\beta^{5}}.
\end{equation}

\par
\vspace{2mm}
\textbf{Step 5.}
Let
\[
\frac{w}{v}(x_{*},\xi_{*},\eta_{*})=\sup_{\overline{\Omega}\times\mathbb{S}^{n}}\frac{w}{v},
\]
where $(x_{*},\xi_{*},\eta_{*})\in\overline{\Omega}\times\mathbb{S}^{n}$. 
Applying Lemma \ref{thm3.1} with \eqref{eqn3.12}, \eqref{eqn3.13}, and  \eqref{eqn3.17}, we derive 
\begin{equation}\label{eqn3.18}
\frac{w}{v}(x_{*},\xi_{*},\eta_{*})\leq\beta^{\alpha}\sup_{\partial\Omega}u_{\nu\nu}+\frac{2K+C}{\beta^{8}}\left(1+\sigma_{1}^{\frac{-1}{k-1}}\big[D^{2}u(x_{*})\big]\right). 
\end{equation}
Suppose  
\[
\frac{w}{v}(x_{*},\xi_{*},\eta_{*})>\frac{1}{\beta^{8}},
\]
then we have at the point $(x_{*},\xi_{*},\eta_{*})$, 
\begin{align*}
u_{\xi\xi}+2\eta u_{\xi}+\eta^{2}u&\geq \frac{1}{\beta^{8}}\left(\frac{1}{4\alpha}(\psi+\beta)^{1-\alpha}|\xi|^{2}+\frac{\psi_{\xi}^{2}}{(\psi+\beta)^{\alpha}}+\beta^{4}(\psi+\beta)\eta^{2}\right)\\
&\geq \frac{1}{\beta^{8}}\left(\frac{1}{4}\beta|\xi|^{2}+\beta^{5}\eta^{2}\right)\\
&\geq \frac{1}{\beta^{3}}\left(|\xi|^{2}+\eta^{2}\right)\\
&=\frac{1}{\beta^{3}}. 
\end{align*}
Since $\lambda(D^{2}u(x_{*}))\in\Gamma_{2}$, one gets at the point $x_{*}$, 
\begin{equation}\label{eqn3.19}
\sigma_{1}\big[\lambda\big(D^{2}u\big)\big]\geq\lambda_{\operatorname{max}}\big(D^{2}u\big)\geq u_{\xi_{*}\xi_{*}}\geq \frac{1}{\beta^{3}}-3\|u\|_{C^{1}(\overline{\Omega})}>1, 
\end{equation}
where $0<\beta\ll1$.
Combining \eqref{eqn3.18} and \eqref{eqn3.19} yields  
\[
\sup_{\overline{\Omega}\times\mathbb{S}^{n}}\frac{w}{v}\leq \beta^{\alpha}\sup_{\partial\Omega}u_{\nu\nu}+\frac{4K+2C}{\beta^{8}}. 
\]
Therefore, for any $x\in\Omega$ and $\xi\in\mathbb{S}^{n-1}$, 
\begin{align}
w(x,\xi,0)&\leq v(x,\xi,0)\sup_{\overline{\Omega}\times\mathbb{S}^{n}}\frac{w}{v}\nonumber\\
&=\left(\frac{1}{4\alpha}(\psi+\beta)^{1-\alpha}+\frac{\psi_{\xi}^{2}}{(\psi+\beta)^{\alpha}}\right) \left(\beta^{\alpha}\sup_{\partial\Omega}u_{\nu\nu}+ \frac{4K+2C}{\beta^{8}}\right)\nonumber\\
&\leq \frac{C_{\psi}}{\psi^{\alpha}}\left(\beta^{\alpha}\sup_{\partial\Omega}u_{\nu\nu}+ \frac{4K+2C}{\beta^{8}}\right),\label{eqn3.20}
\end{align}
where the constant $C_{\psi}>0$ depends on $\|\psi\|_{C^{3}(\overline{\Omega})}$. 
From the construction of $\psi^{*}=-t^{-1}\psi$ in Lemma \ref{thm2.4}, there exist  small positive constants $\delta$ and $\delta^{*}$ such that
\begin{equation}\label{eqn3.21}
\psi^{*}\leq-\delta^{*}d(x)\quad \text{in}\ \Omega_{\delta}, 
\end{equation}
and since $\psi^{*}$ is subharmonic, 
\begin{equation}\label{eqn3.22}
\psi^{*}\leq-\delta^{*}\delta\quad \text{in}\ \Omega\setminus\Omega_{\delta}, 
\end{equation}
where $t$, $\delta$, $\delta^{*}$ depend on $\Omega$.  
It follows from \eqref{eqn3.20}--\eqref{eqn3.22} that 
\[
u_{\xi\xi}(x)\leq C_{*}\frac{1}{d(x)}\left(\beta^{\alpha}\sup_{\partial\Omega}u_{\nu\nu}+ \frac{1}{\beta^{8}}\right),\quad \forall0<\beta\ll1,
\]
for all $x\in\Omega$ and $\xi\in\mathbb{S}^{n-1}$, where the constant $C_{*}>0$ depends on $\Omega$, $\|u\|_{C^{1}(\overline{\Omega})}$, second-order tangential and tangential-normal  derivatives of $u$ on $\partial\Omega$,  $\operatorname{dist}(\Omega,\partial\Omega_{0})$, and either $\|f^{1/(k-1)}\|_{C^{1,1}(\overline{\Omega_{0}})}$ or  $\|f^{3/(2k-2)}\|_{C^{2,1}(\overline{\Omega_{0}})}$. 
Finally, taking
\[
\varepsilon:=C_{*}\beta^{\alpha}\quad \text{and}\quad C_{\varepsilon}:=C_{*}\beta^{-8},
\]
using the results in Section 3, we complete the proof of Theorem \ref{thm1.1}. 
\end{proof}

\section{The boundary estimate in terms of the interior one}

Let $A=(A_{ij})$ be a skew-symmetric matrix, which implies that the matrix  $\exp{A}$ is orthogonal. 
Define a vector field $\bar{\tau}:\ \overline{\Omega}\rightarrow\mathbb{R}^{n}$ by  
\[
\bar{\tau}(x):=Ax.
\]
We adopt notations
\[
u_{\bar{\tau}\bar{\tau}}:=u_{ij}\bar{\tau}_{i}\bar{\tau}_{j}\quad \text{and}\quad  u_{(\bar{\tau})(\bar{\tau})}:=\big(u_{\bar{\tau}}\big)_{\bar{\tau}}=u_{ij}\bar{\tau}_{i}\bar{\tau}_{j}+u_{i}\bar{\tau}_{j}\partial_{x_{j}}(\bar{\tau}_{i}).
\]
The following derivative exchange formula is due to Ivochkina-Trudinger-Wang \cite{Wang2004}.

\begin{lemma}[Lemma 2.1 of \cite{Wang2004}]\label{thm4.1}
For any function $u\in C^{4}(\overline{\Omega})$, it holds that 
\[
F^{ij}\big[D^{2}u\big]\left(u_{(\tau)(\tau)}\right)_{ij}=\left(F\big[D^{2}u\big]\right)_{(\tau)(\tau)}-F^{ij,st}\big[D^{2}u\big](u_{\tau})_{ij}(u_{\tau})_{st},
\]
where $\tau(x):=\vec{a}+\bar{\tau}(x)$ for any fixed vector $\vec{a}\in\mathbb{R}^{n}$. 
\end{lemma}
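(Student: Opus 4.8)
The plan is to exploit that $\tau(x)=\vec a+\bar\tau(x)=\vec a+Ax$ with $A$ skew-symmetric generates a one-parameter group of rigid motions, and that $F=\sigma_k^{1/k}$ is invariant under orthogonal conjugation of its matrix argument.

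First I would introduce the flow $\Phi_t$ of $\tau$, i.e.\ $\frac{d}{dt}\Phi_t(x)=\tau(\Phi_t(x))$ with $\Phi_0=\operatorname{id}$. Differentiating in $x$ and using $\partial_{x_j}\tau_i=A_{ij}$ gives $\frac{d}{dt}D\Phi_t=A\,D\Phi_t$, hence $D\Phi_t(x)\equiv\exp(tA)$, which is orthogonal and independent of $x$; moreover $\Phi_t$ is affine in $x$, so $D^2_x\Phi_t=0$. Setting $w(x,t):=u(\Phi_t(x))$ therefore yields $D^2_xw(x,t)=\exp(tA)^{T}(D^2u)(\Phi_t(x))\exp(tA)$, and since $F[OBO^{T}]=F[B]$ for $O\in O(n)$ (as $\sigma_k^{1/k}$ depends only on eigenvalues) we obtain the key identity
\[
F\big[D^2_xw(x,t)\big]=\big(F[D^2u]\big)\!\left(\Phi_t(x)\right)\qquad\text{for small }|t|.
\]

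Next I would differentiate this identity twice in $t$ at $t=0$ and compare the two sides. For any $C^2$ scalar $h$, the chain rule with $\partial_{x_j}\tau_i=A_{ij}$ gives $\partial_t|_{0}(h\circ\Phi_t)=h_\tau$ and $\partial_t^2|_{0}(h\circ\Phi_t)=h_{ij}\tau_i\tau_j+h_iA_{ij}\tau_j=h_{(\tau)(\tau)}$; applying this with $h=F[D^2u]$ computes the right-hand side, while applying it with $h=u$ identifies $w_t|_{0}=u_\tau$ and $w_{tt}|_{0}=u_{(\tau)(\tau)}$, in accordance with the paper's definition $u_{(\tau)(\tau)}=(u_\tau)_\tau$. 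For the left-hand side, $u\in C^4(\overline\Omega)$ together with smoothness of $w$ in $t$ lets the $t$- and $x$-derivatives commute, so $\partial_tF[D^2_xw]=F^{ij}[D^2_xw](w_t)_{ij}$ and
\[
\partial_t^2F[D^2_xw]=F^{ij,st}[D^2_xw](w_t)_{ij}(w_t)_{st}+F^{ij}[D^2_xw](w_{tt})_{ij}.
\]
Evaluating at $t=0$, where $D^2_xw=D^2u$, $w_t=u_\tau$, $w_{tt}=u_{(\tau)(\tau)}$, and equating with the right-hand side gives
\[
F^{ij,st}[D^2u](u_\tau)_{ij}(u_\tau)_{st}+F^{ij}[D^2u]\big(u_{(\tau)(\tau)}\big)_{ij}=\big(F[D^2u]\big)_{(\tau)(\tau)},
\]
which is the asserted formula after rearrangement.

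The step carrying the weight is the key identity $F[D^2_xw(x,t)]=(F[D^2u])(\Phi_t(x))$: it uses precisely the structure $\bar\tau(x)=Ax$ with $A$ skew, which forces $D\Phi_t$ to be a constant orthogonal matrix, together with the orthogonal invariance of $\sigma_k^{1/k}$; everything else is two applications of the chain rule. A purely computational alternative is to expand $F^{ij}(u_{(\tau)(\tau)})_{ij}$ directly, but this requires differentiating the invariance relation $F[OBO^{T}]=F[B]$ once and twice to absorb the first- and second-order terms in $A$, and the resulting bookkeeping is exactly what the flow argument handles automatically.
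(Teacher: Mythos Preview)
The paper does not prove this lemma; it simply cites it as Lemma~2.1 of Ivochkina--Trudinger--Wang \cite{Wang2004}. Your argument is correct and is essentially the proof given in that reference: the vector field $\tau(x)=\vec a+Ax$ with $A$ skew generates a one-parameter family of rigid motions, the orthogonal invariance of $F=\sigma_k^{1/k}$ yields $F[D^2_xw(\cdot,t)]=(F[D^2u])\circ\Phi_t$, and two $t$-derivatives at $t=0$ give the identity. The computations $\partial_t|_0 w=u_\tau$, $\partial_t^2|_0 w=u_{(\tau)(\tau)}$, and the chain-rule expansion of $\partial_t^2 F[D^2_xw]$ are all correct, so nothing is missing.
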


Let $u\in C^{3,1}(\overline{\Omega})$ be a $k$-admissible solution to the Dirichlet problem \eqref{eqn1.1}.
Let the origin be a boundary point satisfying $\nu(0)=e_{n}$, and $\partial\Omega$ be represented by
\begin{equation}\label{eqn4.2}
x_{n}=\rho\big(x'\big)
\end{equation}
on $\partial\Omega\cap\{|x'|\leq r_{0}\}$ for some $C^{3,1}$ function $\rho$ and constant $r_{0}>0$. 
It is clear that $\nabla_{x'}\rho(0)=0$ and $x_{n}=O(|x'|^{2})$. 
Let $\varphi(0)=0$ and $\nabla\varphi(0)=0$ by subtracting an affine function. 
It follows from \eqref{eqn4.2} that 
\begin{equation}\label{eqn4.31}
\nu(x)=\frac{\left(-\rho_{1}(x'),\dots,-\rho_{n-1}(x'),1\right)}{\sqrt{1+\sum_{i=1}^{n-1}\rho_{i}^{2}(x')}} 
\end{equation}
for all $x\in\partial\Omega\cap\{|x'|\leq r_{0}\}$. 
We introduce a tangential vector field $\eta$ defined by 
\begin{equation}\label{eqn4.30}
\eta(x):=\left(\frac{x'}{|x'|},\sum_{i=1}^{n-1}\frac{x_{i}}{|x'|}\rho_{i}\big(x'\big)\right)\quad \text{in}\ B_{r_{0}}\setminus\{0\}. 
\end{equation}
We construct a vector field 
\begin{equation}\label{eqn4.3}
\tau=\tau(x):=
\begin{pmatrix}
1\\
0\\
\vdots\\
0
\end{pmatrix}
+
\begin{pmatrix}
~ &~ &~ &-\rho_{11}(0)\\
~ &0_{(n-1)\times(n-1)}\hspace{-10mm} &~ &\vdots\\
~ &~ &~ &-\rho_{1,n-1}(0)\\
\rho_{11}(0)\hspace{-7mm} &\cdots\hspace{-6mm} &\rho_{1,n-1}(0)\hspace{-3mm} &0
\end{pmatrix}
\begin{pmatrix}
x_{1}\\
x_{2}\\
\vdots\\
x_{n}
\end{pmatrix}
\end{equation}
in $\overline{\Omega}$. 
Define
\[
w(x):= u_{(\tau)(\tau)}(x)-u_{(\tau)(\tau)}(0)\quad \text{in}\ \overline{\Omega},
\]
and 
\[
M:=\sup_{x\in\partial\Omega}u_{\nu\nu}(x). 
\]
Without loss of generality, we assume $M\gg1$, and $C_{\varepsilon}>1$ for all $\varepsilon$ in \eqref{eqn1.7}.  

\begin{lemma}\label{thm4.7}
There exist a positive constant $K_{0}$ and a linear function $h$ of $x'$ such that 
\begin{equation}\label{eqn4.10}
w(x)-h\big(x'\big)\leq K_{0}\left(\big|x'\big|^{2}+M\big|x'\big|^{4}\right)
\end{equation}
for all $x\in\partial\Omega\cap\{|x'|\leq r_{0}\}$, where $K_{0}$ and the coefficients of $h$ depend on $\Omega$, $\|\varphi\|_{C^{3,1}(\partial\Omega)}$, $\operatorname{dist}(\Omega,\partial\Omega_{0})$, and either $\|f^{1/(k-1)}\|_{C^{1,1}(\overline{\Omega_{0}})}$ or  $\|f^{3/(2k-2)}\|_{C^{1,1}(\overline{\Omega_{0}})}$, but are independent of $\inf_{\Omega_{0}}f$. 
\end{lemma}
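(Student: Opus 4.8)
The plan is to compute $w(x) = u_{(\tau)(\tau)}(x) - u_{(\tau)(\tau)}(0)$ explicitly on $\partial\Omega\cap\{|x'|\le r_0\}$ by exploiting the boundary condition $u=\varphi$. First I would expand $u_{(\tau)(\tau)}$ using the definition $u_{(\tau)(\tau)} = u_{ij}\tau_i\tau_j + u_i\,\partial_{x_j}(\tau_i)\,\tau_j$, with $\tau$ given by the affine formula \eqref{eqn4.3}. Because $\tau$ is constructed so that at the origin it is the tangential direction $e_1$ and its antisymmetric part matches the second fundamental form of $\partial\Omega$ in the $e_1$-direction (the $-\rho_{1i}(0)$ entries), the vector field $\tau$ is ``almost tangential'' along $\partial\Omega$ near $0$: one checks $\langle\tau(x),\nu(x)\rangle = O(|x'|^2)$ for $x\in\partial\Omega$ using \eqref{eqn4.31}. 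This is the analogue of the operator $T_\alpha$ from Lemma \ref{thm7.2}, now promoted to a genuine directional-derivative-of-directional-derivative quantity so that Lemma \ref{thm4.1} can later be applied.

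The key computational step is: along $\partial\Omega$, differentiate the identity $u(x',\rho(x')) = \varphi(x',\rho(x'))$ enough times in the $x'$-variables to express $u_{(\tau)(\tau)}$ restricted to the boundary purely in terms of $\varphi$, $\rho$, and the single first-order quantity $u_n$ (the normal derivative $u_\nu$, which is already bounded by \eqref{eqn7.23}). The point is that all second-order derivatives of $u$ tangent to the boundary, and the tangential–normal ones, are controlled by $\|\varphi\|_{C^{3,1}}$ and $\|\rho\|_{C^{3,1}}$ via Theorem \ref{thm1.5}; the genuinely normal second derivative $u_{\nu\nu}$ enters only through terms carrying a factor of $\langle\tau,\nu\rangle^2 = O(|x'|^4)$, since $w$ involves $D^2u$ paired against the nearly-tangential field $\tau\otimes\tau$. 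Collecting terms: a constant term cancels by subtracting $u_{(\tau)(\tau)}(0)$; a linear-in-$x'$ term is absorbed into $h(x')$; the remaining error is a quadratic $O(|x'|^2)$ coming from the Taylor remainders of $\varphi$, $\rho$, and $u$'s boundary data, plus the $M|x'|^4$ term from the one surviving appearance of $u_{\nu\nu}$. One must also invoke \eqref{eqn7.1} (through \eqref{eqn2.7}) to bound the first derivatives of $g=f^{3/(2k-2)}$ — equivalently to control $F^{ij}u_{eij}$ — since differentiating the equation enters when one needs $u_{\tau ij}$-type quantities on the boundary; this explains the dependence of $K_0$ on $\|f^{1/(k-1)}\|_{C^{1,1}}$ or $\|f^{3/(2k-2)}\|_{C^{1,1}}$.

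The main obstacle I anticipate is bookkeeping: verifying carefully that $u_{\nu\nu}$ never appears with a coefficient worse than $O(|x'|^4)$. This requires precisely tracking, in the expansion of $u_{(\tau)(\tau)}$ on $\partial\Omega$, how the normal component of $\tau$ decays; a slightly wrong choice of the antisymmetric part of $\tau$ in \eqref{eqn4.3} would leave a term of order $M|x'|^2$, which is too large for the subsequent barrier argument to close. The construction \eqref{eqn4.3} (with the $\rho_{1i}(0)$ entries) is exactly engineered so that $\tau(x) - (\text{tangential field})$ vanishes to second order along $\partial\Omega$ near $0$, and I would verify this identity first, then feed it into the expansion. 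The rest — Taylor-expanding $\varphi$, $\rho$ and substituting the boundary relations for $u_{ij}$, $1\le i\le n-1$ — is routine, and the linear part in $x'$ is simply named $h(x')$ with no further structure needed.
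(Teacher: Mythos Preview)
Your plan is correct and matches the paper's proof: decompose $\tau$ into its tangential part $\xi$ and normal part $\langle\tau,\nu\rangle\nu$, verify $\langle\tau,\nu\rangle=O(|x'|^2)$ so that $u_{\nu\nu}$ enters only with coefficient $O(|x'|^4)$, control $u_{\xi\xi}$ on $\partial\Omega$ via the boundary identity $u_{\xi\xi}=\varphi_{\xi\xi}+\kappa(x,\xi)(\varphi_\nu-u_\nu)$ together with Theorem~\ref{thm1.5}, and absorb the linear-in-$x'$ pieces into $h$. One small correction: you do \emph{not} need to differentiate the equation or invoke \eqref{eqn2.7} here---no third-order quantities $u_{\tau ij}$ appear in this lemma, since $w$ involves only $D^2u$ and $Du$; the dependence of $K_0$ on the $f$-norms enters solely through Theorem~\ref{thm1.5}, which supplies the bounds on $\|u\|_{C^1(\overline\Omega)}$ and on the boundary tangential and tangential--normal second derivatives.
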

\begin{proof}  
For any point $x\in\partial\Omega\cap\{|x'|\leq r_{0}\}$, let $\xi(x)$ be the projection of $\tau(x)$ onto the tangent plane of $\partial\Omega$ at $x$.   
By \eqref{eqn4.31} and \eqref{eqn4.3}, one has  
\begin{equation}\label{eqn4.4}
\langle \tau,\nu\rangle=O\Big(\big|x'\big|^{2}\Big)
\end{equation}
on $\partial\Omega\cap\{|x'|\leq r_{0}\}$. 
It is clear that for any $i\in\{1,\dots,n\}$, 
\begin{align}
u_{i}(x)-u_{i}(0)&=u_{i}\big(x',\rho\big(x'\big)\big)-u_{i}(0,\rho(0))\nonumber\\
&=e_{i}^{T}D^{2}u\big(tx',\rho\big(tx'\big)\big)\eta\big(tx',\rho\big(tx'\big)\big)\times \left|x'\right|\nonumber\\
&= O\big(\big|x'\big|\big),\label{eqn4.6}
\end{align}
for all $x\neq0$ on $\partial\Omega\cap\{|x'|\leq r_{0}\}$, where $t\in(0,1)$ depends on $x$, and the tangential vector field $\eta$ is defined in \eqref{eqn4.30}.   
Since $\nabla_{x'}u(0)=\nabla_{x'}\varphi(0)=0$, using \eqref{eqn4.6} one gets  
\begin{align}
|u_{\nu}(x)-u_{n}(x)|&\leq\sum_{i=1}^{n-1}\left|\left(u_{i}(x)-u_{i}(0)\right)\rho_{i}\big(x'\big)\right|+\frac{|u_{n}(x)|\sum_{i=1}^{n-1}\rho_{i}^{2}(x')}{1+\sqrt{1+\sum_{i=1}^{n-1}\rho_{i}^{2}}}\nonumber\\
&=O\Big(\big|x'\big|^{2}\Big)\label{eqn4.9}
\end{align}
on $\partial\Omega\cap\{|x'|\leq r_{0}\}$. 

Combining \eqref{eqn4.4}, \eqref{eqn4.3}, and \eqref{eqn4.6}, one derives  
\begin{align}
u_{(\tau)(\tau)}(x)&=(\xi+\langle\tau,\nu\rangle\nu)^{T}D^{2}u\left(\xi+\langle\tau,\nu\rangle\nu\right)+u_{i}\tau_{j}\partial_{x_{j}}(\tau_{i})\nonumber\\
&=u_{\xi\xi}+u_{i}\tau_{j}\partial_{x_{j}}(\tau_{i})+O\Big(\big|x'\big|^{2}+M\big|x'\big|^{4}\Big)\nonumber\\
&=u_{\xi\xi}+ \rho_{11}(0)u_{n}(x)+h_{1}\big(x'\big)+O\Big(\big|x'\big|^{2}+M\big|x'\big|^{4}\Big) \label{eqn4.7}
\end{align}
on $\partial\Omega\cap\{|x'|\leq r_{0}\}$, where 
\[
h_{1}\big(x'\big):=-\sum_{i=1}^{n-1}\bigg(\rho_{1i}(0)\sum_{k=1}^{n-1}\rho_{1k}(0)u_{k}(0)\bigg)x_{i}.
\]
Using $\partial\Omega\in C^{3,1}$ and $u=\varphi$ on $\partial\Omega$, one gets   
\begin{align}\label{eqn4.8}
u_{\xi\xi}(x)&=\varphi_{\xi\xi}(x)+\kappa(x,\xi(x))\left(\varphi_{\nu}(x)-u_{\nu}(x)\right)\nonumber\\
&=[\varphi_{11}+\rho_{11}\varphi_{n}](0)+h_{2}\big(x'\big)+O\Big(\big|x'\big|^{2}\Big)-\kappa(x,\xi(x))u_{\nu}(x)\nonumber\\
&=[u_{11}+\rho_{11}u_{n}](0)+h_{2}\big(x'\big)+O\Big(\big|x'\big|^{2}\Big)-\kappa(x,\xi(x))u_{\nu}(x)
\end{align}
on $\partial\Omega\cap\{|x'|\leq r_{0}\}$, where $\kappa(x,\xi)$ is the curvature of $\partial\Omega$ along the direction $\xi$ at $x\in\partial\Omega$, and $h_{2}(x')$ is the linear part in the Taylor expansion of $\varphi_{\xi\xi}(x)+\kappa(x,\xi(x))\varphi_{\nu}(x)$ with respect to $x'$. 
By \eqref{eqn4.7}, \eqref{eqn4.8}, \eqref{eqn4.9}, and \eqref{eqn4.6}, we have 
\begin{align}
w(x)&=u_{(\tau)(\tau)}(x)-[u_{11}+\rho_{11}u_{n}](0)\nonumber\\
&=\rho_{11}(0)u_{n}(x)-\kappa(x,\xi(x))u_{\nu}(x)+h_{1}+h_{2}+O\Big(\big|x'\big|^{2}+M\big|x'\big|^{4}\Big) \nonumber\\
&=\left[\rho_{11}(0)-\kappa(x,\xi(x))\right]u_{n}(x)+h_{1}+h_{2}+O\Big(\big|x'\big|^{2}+M\big|x'\big|^{4}\Big) \nonumber\\
&=\left[\rho_{11}(0)-\kappa(x,\xi(x))\right]u_{n}(0)+h_{1}+h_{2}+O\Big(\big|x'\big|^{2}+M\big|x'\big|^{4}\Big) \nonumber\\ 
&=h_{3}\big(x'\big)+h_{1}+h_{2}+O\Big(\big|x'\big|^{2}+M\big|x'\big|^{4}\Big), \nonumber
\end{align}
on $\partial\Omega\cap\{|x'|\leq r_{0}\}$, where $h_{3}(x')$ is the linear part in the Taylor expansion of $-\kappa(x,\xi(x))u_{n}(0)$ with respect to $x'$.

Consequently, there exists a constant $K_{0}>0$ such that 
\[
w(x)-h\big(x'\big)\leq K_{0}\left(\big|x'\big|^{2}+M\big|x'\big|^{4}\right)
\]
for all $x\in\partial\Omega\cap\{|x'|\leq r_{0}\}$, where $h(x'):=h_{1}(x')+h_{2}(x')+h_{3}(x')$; $K_{0}$ and the coefficients of $h$ depend on $\Omega$, $\|\varphi\|_{C^{3,1}(\partial\Omega)}$, $\|u\|_{C^{1}(\overline{\Omega})}$, second-order tangential and tangential-normal  derivatives of $u$ on $\partial\Omega$. 
Using the results in Section 3, we complete the proof of Lemma \ref{thm4.7}. 
\end{proof}
\par
\vspace{2mm}

Define the linear elliptic operator $\mathcal{L}$ by 
\[
\mathcal{L}(w):=F^{ij}\big[D^{2}u\big]w_{ij}.
\]
Define  
\[
\omega_{r}:=\left\{x\in\Omega:\ \rho\big(x'\big)<x_{n}<\rho\big(x'\big)+r^{4},\ \big|x'\big|<r\right\},\quad \forall r\leq r_{0}.
\]

\subsection{The case under condition $(\romannumeral1)$ or $(\romannumeral2)$}

In this subsection, we assume condition \eqref{eqn1.6} is satisfied, and either $f^{1/(k-1)}\in C^{1,1}(\overline{\Omega_{0}})$ holds or $f^{3/(2k-2)}\in C^{2,1}(\overline{\Omega_{0}})$ and $k\geq5$ hold.  

\begin{lemma}\label{thm4.2}
Let $\inf_{\Omega_{0}}f>0$ where  $\Omega\Subset\Omega_{0}$. 
We have two conclusions:

$(\romannumeral1)$ If $f^{1/(k-1)}\in C^{1,1}(\overline{\Omega_{0}})$, then 
\[
\mathcal{L}(w)\geq-C_{1}f^{\frac{-1}{k(k-1)}}\quad \text{in}\ \Omega,
\]
where $C_{1}>0$ depends on $\Omega$, $\operatorname{dist}(\Omega,\partial\Omega_{0})$, and  $\|f^{1/(k-1)}\|_{C^{1,1}(\overline{\Omega_{0}})}$, but is independent of $\inf_{\Omega_{0}}f$. 

$(\romannumeral2)$ If $f^{3/(2k-2)}\in C^{2,1}(\overline{\Omega_{0}})$ and $k\geq5$, then 
\[
\mathcal{L}(w)\geq-C_{2}f^{\frac{-1}{k(k-1)}}\quad \text{in}\ \Omega, 
\]
where $C_{2}>0$ depends on $\Omega$, $\operatorname{dist}(\Omega,\partial\Omega_{0})$, and 
$\|f^{3/(2k-2)}\|_{C^{2,1}(\overline{\Omega_{0}})}$, but is independent of $\inf_{\Omega_{0}}f$.
\end{lemma}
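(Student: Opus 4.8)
The plan is to turn the computation of $\mathcal{L}(w)$ into a pointwise lower bound for the second $\tau$-derivative of the right-hand side $f^{1/k}$, by combining the derivative exchange formula (Lemma~\ref{thm4.1}) with the concavity of $F$, and then invoking the pointwise inequalities of Lemma~\ref{thm2.2}. The argument runs parallel to the derivations of \eqref{eqn2.13}--\eqref{eqn2.14}, except that the rotation-type field $\tau$ replaces a constant direction $e$, which produces an extra harmless first-order term.

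First, since $w$ and $u_{(\tau)(\tau)}$ differ by a constant, $\mathcal{L}(w)=F^{ij}\big[D^{2}u\big]\big(u_{(\tau)(\tau)}\big)_{ij}$. The field $\tau=\vec a+\bar\tau$ with $\bar\tau(x)=Ax$, $A$ skew-symmetric, is exactly of the form covered by Lemma~\ref{thm4.1}; combining that identity with $F[D^{2}u]=f^{1/k}$ from \eqref{eqn2.5} and the concavity of $F$ established in \cite{CNS1985} (so that $-F^{ij,st}(u_{\tau})_{ij}(u_{\tau})_{st}\geq0$) gives
\[
\mathcal{L}(w) \;=\; \big(f^{1/k}\big)_{(\tau)(\tau)} - F^{ij,st}\big[D^{2}u\big](u_{\tau})_{ij}(u_{\tau})_{st} \;\geq\; \big(f^{1/k}\big)_{(\tau)(\tau)}.
\]
Hence it suffices to bound $\big(f^{1/k}\big)_{(\tau)(\tau)}$ from below. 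Writing $f^{1/k}=g^{p/k}$ and using $\partial_{j}\tau_{i}=A_{ij}$, the chain rule yields, exactly as in the derivation of \eqref{eqn2.16},
\[
\big(g^{p/k}\big)_{(\tau)(\tau)} \;=\; \frac{p}{k}\,g^{\frac{p}{k}-1}\!\left(g_{(\tau)(\tau)} - \frac{k-p}{k}\,\frac{|\partial_{\tau}g|^{2}}{g}\right),\qquad g_{(\tau)(\tau)} = |\tau|^{2}\partial_{ee}g + A_{ij}\tau_{j}\partial_{i}g,
\]
with $e:=\tau/|\tau|$ and $|\tau|\leq C(\Omega)$, so the first-order term is $\leq C|\nabla g|$ in absolute value.

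For conclusion $(\romannumeral1)$ I would take $p=k-1$, so the coefficient is $\tfrac{k-p}{k}=\tfrac{1}{k}$; bounding $|\partial_{ee}g|\leq\|g\|_{C^{1,1}(\overline{\Omega_{0}})}$, $|\nabla g|\leq C$, and $|\partial_{e}g|^{2}/g\leq K$ by \eqref{eqn7.1}, the bracket is $\geq-C$, and since $\tfrac{p}{k}\,g^{p/k-1}=\tfrac{k-1}{k}f^{-1/(k(k-1))}$ this gives $\mathcal{L}(w)\geq-C_{1}f^{-1/(k(k-1))}$. For conclusion $(\romannumeral2)$, $p=(2k-2)/3$ makes the coefficient $\tfrac{k-p}{k}=\tfrac{k+2}{3k}$, and the hypothesis $k\geq5$ is precisely what forces $\tfrac{k+2}{3k}<\tfrac{1}{2}$; Lemma~\ref{thm2.2}$(\romannumeral2)$ with $\alpha=\tfrac{k+2}{3k}$ then gives $\partial_{ee}g-\tfrac{k+2}{3k}|\partial_{e}g|^{2}/g\geq-Kg^{1/3}$, while $|\nabla g|\leq\sqrt{Kg}\leq Cg^{1/3}$ by \eqref{eqn7.1} together with $g\leq\sup_{\overline{\Omega}}g$; hence the bracket is $\geq-Cg^{1/3}$, and the exponent identity $\tfrac{p}{k}-1+\tfrac{1}{3}=-\tfrac{2}{3k}$ with $g^{-2/(3k)}=f^{-1/(k(k-1))}$ yields $\mathcal{L}(w)\geq-C_{2}f^{-1/(k(k-1))}$.

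The main obstacle is essentially bookkeeping: applying Lemma~\ref{thm4.1} cleanly for the rotation-type field $\tau$ (and producing the $(\tau)(\tau)$-analogue of \eqref{eqn2.16} with its extra skew first-order term), and carefully tracking the powers of $g$ in case $(\romannumeral2)$ so that the final exponent lands exactly on $-1/(k(k-1))$. It is also worth noting that in case $(\romannumeral1)$ only a constant lower bound for the bracket is required, which is why mere $C^{1,1}$ control of $g$ suffices there, whereas case $(\romannumeral2)$ genuinely needs the $g^{1/3}$-improvement provided by \eqref{eqn7.2} and hence the stronger $C^{2,1}$ hypothesis (the threshold $k\geq5$ being dictated by the constraint $\alpha<1/2$ in Lemma~\ref{thm2.2}). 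Note that condition \eqref{eqn1.6} plays no role in this lemma; it enters only in the subsequent barrier construction.
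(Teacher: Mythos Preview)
Your proposal is correct and follows essentially the same route as the paper: apply Lemma~\ref{thm4.1} together with the concavity of $F$ to obtain $\mathcal{L}(w)\geq\big(g^{p/k}\big)_{(\tau)(\tau)}$, expand by the chain rule into the quadratic-in-$\partial_{\tau}g$ bracket plus the skew first-order term, and then invoke Lemma~\ref{thm2.2} (part $(\romannumeral1)$ in case $(\romannumeral1)$, part $(\romannumeral2)$ with $\alpha=(k+2)/(3k)<1/2$ in case $(\romannumeral2)$) to bound each piece; the exponent bookkeeping you indicate is exactly the one the paper carries out in \eqref{eqn7.10} and the lines following it.
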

\begin{proof}
From Lemma \ref{thm4.1}, we have
\begin{align}
\mathcal{L}(w)&=F^{ij}\big[D^{2}u\big]\left(u_{(\tau)(\tau)}\right)_{ij}\nonumber\\
&\geq \left(g^{\frac{p}{k}}\right)_{\tau\tau}+\left(g^{\frac{p}{k}}\right)_{i}\tau_{j}\partial_{x_{j}}(\tau_{i})\nonumber\\
&=\frac{p}{k}g^{\frac{p}{k}-1}\left(\partial_{\tau\tau}g-\frac{k-p}{k}\frac{\left|\partial_{\tau}g\right|^{2}}{g}\right)+ \left(\frac{p}{k}g^{\frac{p}{k}-\frac{1}{2}}\frac{\partial_{x_{i}}g}{\sqrt{g}}\right)\tau_{j}\partial_{x_{j}}(\tau_{i})\label{eqn7.10}
\end{align}
in $\Omega$. 
When $f^{1/(k-1)}\in C^{1,1}(\overline{\Omega_{0}})$, it follows from \eqref{eqn7.10} and \eqref{eqn7.1} that  
\begin{align*}
\mathcal{L}(w)&\geq-\frac{p}{k}g^{-\frac{1}{k}}\left|\partial_{\tau\tau}g-\frac{k-p}{k}\frac{\left|\partial_{\tau}g\right|^{2}}{g}\right|- g^{-\frac{1}{k}}\left|\frac{p}{k}\partial_{x_{i}}g\tau_{j}\partial_{x_{j}}(\tau_{i})\right|\\
&\geq-C_{1}f^{\frac{-1}{k(k-1)}}
\end{align*}
in $\Omega$, where $C_{1}>0$ depends on $\Omega$, $\operatorname{dist}(\Omega,\partial\Omega_{0})$, and  $\|f^{1/(k-1)}\|_{C^{1,1}(\overline{\Omega_{0}})}$. 

When $f^{3/(2k-2)}\in C^{2,1}(\overline{\Omega_{0}})$ and $k\geq5$, which implies $(k+2)/3k<1/2$, it follows from \eqref{eqn7.10}, \eqref{eqn7.2}, and  \eqref{eqn7.1} that 
\begin{align*}
\mathcal{L}(w)&\geq\frac{p}{k}g^{\frac{-p}{k(k-1)}-\frac{1}{3}}\left(\partial_{\tau\tau}g-\frac{k+2}{3k}\frac{\left|\partial_{\tau}g\right|^{2}}{g}\right)+ g^{\frac{-p}{k(k-1)}}\left(\frac{p}{k}g^{\frac{1}{6}}\frac{\partial_{x_{i}}g}{\sqrt{g}}\right)\tau_{j}\partial_{x_{j}}(\tau_{i})\\
&\geq-C_{2}f^{\frac{-1}{k(k-1)}}
\end{align*}
in $\Omega$, where $C_{2}>0$ depends on $\Omega$, $\operatorname{dist}(\Omega,\partial\Omega_{0})$, and 
$\|f^{3/(2k-2)}\|_{C^{2,1}(\overline{\Omega_{0}})}$. 
This completes the proof of Lemma \ref{thm4.2}. 
\end{proof}

\begin{lemma}\label{thm4.3}
There exist positive constants $r_{2}$ and $K_{2}$ such that 
\[
w(x)-h\big(x'\big)\leq K_{2}\left(\big|x'\big|^{2}+M\big|x'\big|^{4}\right)+K_{2}M\left(x_{n}-\rho\big(x'\big)\right)\quad \text{in}\ \omega_{r_{2}}, 
\]
where $r_{2}$ depends on $\Omega$; and the constant $K_{2}$ depends on $\delta_{0}$, $\Omega$, $\|\varphi\|_{C^{3,1}(\partial\Omega)}$, $\operatorname{dist}(\Omega,\partial\Omega_{0})$, and either $\|f^{1/(k-1)}\|_{C^{1,1}(\overline{\Omega_{0}})}$ or  $\|f^{3/(2k-2)}\|_{C^{2,1}(\overline{\Omega_{0}})}$, but is independent of $\inf_{\Omega_{0}}f$. 
\end{lemma}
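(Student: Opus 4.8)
The plan is to prove the inequality of Lemma \ref{thm4.3} by a maximum–principle comparison in the thin region $\omega_{r_{2}}$, dominating $w-h$ from above by an explicit supersolution. I may assume the origin is (essentially) a point of $\partial\Omega$ at which $M=\sup_{\partial\Omega}u_{\nu\nu}$ is attained; since the tangential and tangential–normal second derivatives on $\partial\Omega$ are bounded by Theorem \ref{thm1.5}, this forces $\Delta u(0)=M+O(1)\gg1$, so hypothesis \eqref{eqn1.6} applies at $0$ and gives $\sigma_{1}[D^{2}u]=\Delta u\geq\delta_{0}$ throughout $\Omega\cap B_{r_{0}}(0)$. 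I take $r_{2}\leq r_{0}$ so small (depending only on $\Omega$) that $\omega_{r_{2}}\subset\Omega\cap B_{r_{0}}(0)$, and use as barrier
\[
\Phi:=K_{2}\bigl(|x'|^{2}+M|x'|^{4}\bigr)+NM\psi,
\]
with $\psi$ the auxiliary function of Lemma \ref{thm2.4} and $N,K_{2}$ constants to be fixed in terms of the data listed in the statement (not of $M$).

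The first main step is the supersolution property $\mathcal L\bigl(w-h-\Phi\bigr)\geq0$ in $\omega_{r_{2}}$. Since $h$ is linear in $x'$, $\mathcal L(w-h)=\mathcal L(w)\geq-Cf^{-1/(k(k-1))}$ by Lemma \ref{thm4.2} (valid under either hypothesis on $f$). For $\Phi$, I use $F^{ij}[D^{2}u]\psi_{ij}\leq-\operatorname{tr}F^{ij}[D^{2}u]$ from \eqref{eqn2.15} together with positivity of $(F^{ij}[D^{2}u])$ and $|x'|<r_{2}$ to obtain $\mathcal L(\Phi)\leq\operatorname{tr}F^{ij}[D^{2}u]\bigl(CK_{2}(1+Mr_{2}^{2})-NM\bigr)$; choosing $r_{2}$ small so that $Cr_{2}^{2}\leq\tfrac12$, then $N$ a fixed multiple of $K_{2}$, then using $M\gg1$, this becomes $\mathcal L(\Phi)\leq-cK_{2}M\operatorname{tr}F^{ij}[D^{2}u]$. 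Finally \eqref{eqn2.11} together with $\sigma_{1}[D^{2}u]\geq\delta_{0}$ gives $\operatorname{tr}F^{ij}[D^{2}u]\geq c\,\delta_{0}^{1/(k-1)}f^{-1/(k(k-1))}$, so that $\mathcal L(\Phi)\leq-Cf^{-1/(k(k-1))}$ once $M$ exceeds a fixed threshold, and the two estimates combine to the claim. This is precisely where both \eqref{eqn1.6} and Lemma \ref{thm2.4} (hence the strict $(k-1)$–convexity of $\partial\Omega$) are used; note also that $\psi$ is needed rather than $x_{n}-\rho(x')$, since $D^{2}_{x'x'}\rho(0)$ need not be positive under mere $(k-1)$–convexity.

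The second step is the comparison on $\partial\omega_{r_{2}}=\partial_{1}\cup\partial_{2}\cup\partial_{3}$ (bottom on $\partial\Omega$, top, sides). On $\partial_{1}$ we have $\psi=0$, so $\Phi=K_{2}(|x'|^{2}+M|x'|^{4})$, and Lemma \ref{thm4.7} gives $w-h\leq K_{0}(|x'|^{2}+M|x'|^{4})\leq\Phi$ once $K_{2}\geq K_{0}$. On the top $\partial_{2}$ I would bound $w-h\leq C|D^{2}u|+C$ and invoke the weakly interior estimate \eqref{eqn1.7}, using $d(x)\geq cr_{2}^{4}$ there (which follows from $\partial\Omega\in C^{3,1}$ and its convexity near $0$); choosing the free parameter $\varepsilon$ in \eqref{eqn1.7} to be a small fixed multiple of $r_{2}^{8}$, which freezes $C_{\varepsilon}$, and using $M\gg1$ makes the resulting bound $\leq NM\psi\leq\Phi$ there, and similarly on the part of $\partial_{3}$ where $d(x)$ is not too small. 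Granting the comparison on all of $\partial\omega_{r_{2}}$, the maximum principle gives $w-h\leq\Phi$ in $\omega_{r_{2}}$; since $\psi$ is Lipschitz, vanishes on $\partial\Omega$, and $d(x)\leq x_{n}-\rho(x')$, we have $\psi\leq C_{\psi}\bigl(x_{n}-\rho(x')\bigr)$ in $\omega_{r_{2}}$, so enlarging $K_{2}$ beyond $NC_{\psi}$ converts $NM\psi$ into $K_{2}M(x_{n}-\rho(x'))$ and yields the asserted bound, with $r_{2}$ depending only on $\Omega$ and $K_{2}$ on the stated data.

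The step I expect to be the genuine obstacle is the comparison on the lateral face $\partial_{3}$ near $\partial\Omega$: there points of $\omega_{r_{2}}$ approach the boundary, so $d(x)$ and $\psi(x)$ both degenerate while $|x'|$ stays of size $r_{2}$, and the weakly interior estimate by itself only delivers the far too weak bound $|D^{2}u|\lesssim d(x)^{-1}(\varepsilon M+C_{\varepsilon})$. Forcing the supremum of $w-h-\Phi$ away from that face requires exploiting the fixed quadratic term $K_{2}|x'|^{2}\asymp K_{2}r_{2}^{2}$ together with the precise $r^{4}$–thinness built into $\omega_{r}$, and it is in this part that the balancing of the parameters $r_{2}$, $N$, $K_{2}$, $\varepsilon$ against the largeness of $M$ must be done carefully; the auxiliary fact $d(x)\geq cr_{2}^{4}$ on $\partial_{2}$ and on the upper portion of $\partial_{3}$, used above, is a secondary point to be verified from the regularity and convexity of $\partial\Omega$ near $0$.
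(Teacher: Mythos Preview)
Your barrier and supersolution step are fine, and using the global $\psi$ of Lemma~\ref{thm2.4} in place of the paper's locally built $v$ is a legitimate variant. The real gap is exactly where you locate it, on the lateral face $\partial_{3}\omega_{r_{2}}$, and it comes from your choice of input, not from any intrinsic difficulty of that face.

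You are trying to control $w-h\leq C|D^{2}u|+C$ on $\partial_{2}\cup\partial_{3}$ by the \emph{weakly interior} estimate \eqref{eqn1.7}. That estimate blows up like $d(x)^{-1}$, so on the lower part of $\partial_{3}$ it gives you nothing, and no balancing of $r_{2},N,K_{2},\varepsilon$ can repair this: at fixed $|x'|=r_{2}$ your barrier satisfies $\Phi\geq K_{2}Mr_{2}^{4}$, while \eqref{eqn1.7} only yields $w-h\lesssim d(x)^{-1}(\varepsilon M+C_{\varepsilon})$, which exceeds $K_{2}Mr_{2}^{4}$ for $d(x)$ small regardless of the constants. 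The paper does not use Theorem~\ref{thm1.1} here at all. It invokes the \emph{global} reduction-to-boundary estimate, Theorem~\ref{thm1.2} (and, under the hypothesis $f^{1/(k-1)}\in C^{1,1}$, the corresponding Jiao--Wang result), to get simply
\[
w-h\leq C_{3}M+C_{4}\quad\text{on }\partial_{2}\omega_{r_{2}}\cup\partial_{3}\omega_{r_{2}},
\]
uniformly in $d(x)$. Against this, $\Phi\geq K_{2}Mr_{2}^{4}$ on $\partial_{3}$ and $\Phi\geq NM\psi\geq cNMr_{2}^{4}$ on $\partial_{2}$ suffice once $K_{2}$ (and $N$) are taken large depending only on $r_{2}$ and the data. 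With this one substitution your argument closes; the weakly interior estimate is saved for the next lemma (Lemma~\ref{thm4.4}), where it is genuinely needed to squeeze out the factor $\sigma$.

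A minor point: your remark that ``$\psi$ is needed rather than $x_{n}-\rho(x')$'' is not quite right. The paper's barrier is precisely
\[
v=(x_{n}-\rho)^{2}-\beta_{1}(x_{n}-\rho)-\frac{K_{0}}{M}|x'|^{2}-K_{0}|x'|^{4},
\]
and the strict $(k-1)$-convexity of $\partial\Omega$ makes $\lambda\bigl(D^{2}[x_{n}^{2}+\beta_{1}\rho](0)\bigr)\in\Gamma_{k}$, so that $D^{2}v-\delta_{2}\mathbf{I}_{n}$ lies in a compact subset of $\Gamma_{k}$ throughout $\omega_{r_{2}}$; concavity of $F$ then gives $\mathcal{L}(v)\geq\delta_{2}\operatorname{tr}F^{ij}$ directly. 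This packages into one function what you split as $K_{2}|x'|^{2}+K_{2}M|x'|^{4}+NM\psi$, and it has the pleasant feature that on $\partial_{1}\omega_{r_{2}}$ one gets $-KMv=KK_{0}(|x'|^{2}+M|x'|^{4})$ on the nose, matching Lemma~\ref{thm4.7}.
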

\begin{proof} 
Define a function 
\[
v:=\big(x_{n}-\rho\big(x'\big)\big)^{2}-\beta_{1}\big(x_{n}-\rho\big(x'\big)\big)-\frac{K_{0}}{M}\big|x'\big|^{2}-K_{0}\big|x'\big|^{4}
\]
in $\omega_{r_{0}}$, where the constant $\beta_{1}>0$ is to be determined.  
It holds that 
\begin{align*}
D^{2}v&=D^{2}\big[x_{n}^{2}+\beta_{1}\rho\big(x'\big)\big]+D^{2}\big[\rho^{2}\big(x'\big)-2x_{n}\rho\big(x'\big)\big]\\
&\hspace{4.5mm}-\frac{K_{0}}{M}D^{2}\Big[\big|x'\big|^{2}\Big]-D^{2}\Big[K_{0}\big|x'\big|^{4}\Big].
\end{align*}
Since $\partial\Omega\in C^{3,1}$ is strictly $(k-1)$-convex, one can fix $\beta_{1}$  sufficiently small such that $\lambda(D^{2}[x_{n}^{2}+\beta_{1}\rho](0))$ belongs to a compact subset of $\Gamma_{k}$ independent of $0\in\partial\Omega$.  
By 
\[
|x|\rightarrow0,\ \rho\big(x'\big)\rightarrow0,\ \nabla_{x'}\rho\big(x'\big)\rightarrow0,\ D^{2}_{x'x'}\rho\big(x'\big)\rightarrow D^{2}_{x'x'}\rho(0)\quad \text{as}\ r\rightarrow0,
\]
there exist two small positive constants $r_{2}\leq\min\{r_{0},\beta_{1}\}$, $\delta_{2}$, and a large constant $C_{1}>0$ such that 
\[
\left\{\lambda\big(D^{2}v-\delta_{2}\mathbf{I}_{n}\big):\ x\in\omega_{r_{2}}\right\}\ \ \text{is a perturbation of}\ \  \lambda\big(D^{2}\big[x_{n}^{2}+\beta_{1}\rho\big](0)\big)  
\]
for all $M\geq C_{1}$, and is thus contained in a compact subset of $\Gamma_{k}$ for all $M\geq C_{1}$.
We emphasize that $\beta_{1}$, $r_{2}$, $\delta_{2}$, $C_{1}$ all depend only on $\Omega$.   
Using the concavity of $F$, we have  
\[
F^{ij}\big[D^{2}u\big]\left(C_{2}(v_{ij}-\delta_{2}\delta_{ij})-u_{ij}\right)\geq C_{2}\sigma_{k}^{\frac{1}{k}}\big[D^{2}v-\delta_{2}\mathbf{I}_{n}\big]-f^{\frac{1}{k}}>0
\]  
in $\omega_{r_{2}}$, where $C_{2}$ is sufficiently large. 
Then by virtue of \eqref{eqn2.12} and \eqref{eqn2.11}, we derive 
\begin{equation}\label{eqn4.13}
\mathcal{L}(v)\geq \delta_{2}\operatorname{tr}F^{ij}\big[D^{2}u\big]\geq\frac{\delta_{2}}{C}\sigma_{1}^{\frac{1}{k-1}}\big[D^{2}u\big]f^{\frac{-1}{k(k-1)}}\overset{\eqref{eqn1.6}}{\geq} \frac{\delta_{2}}{C}\delta_{0}^{\frac{1}{k-1}}f^{\frac{-1}{k(k-1)}}
\end{equation}
in $\omega_{r_{2}}$, where $C>0$ is universal. 
Thus using \eqref{eqn4.13} and Lemma \ref{thm4.2}, there exists a large constant $K>0$ such that 
\begin{equation}\label{eqn4.14}
\mathcal{L}(-KMv)<\mathcal{L}(-Kv)<\mathcal{L}(w-h)\quad \text{in}\ \omega_{r_{1}},
\end{equation}
where $K$ depends on $\delta_{0}$, $\Omega$, $\operatorname{dist}(\Omega,\partial\Omega_{0})$, and 
either $\|f^{1/(k-1)}\|_{C^{1,1}(\overline{\Omega_{0}})}$ or $\|f^{3/(2k-2)}\|_{C^{2,1}(\overline{\Omega_{0}})}$. 

The boundary $\partial\omega_{r_{2}}$ consists of three parts: $\partial_{1}\omega_{r_{2}}\cup\partial_{2}\omega_{r_{2}}\cup\partial_{3}\omega_{r_{2}}$, where $\partial_{1}\omega_{r_{2}}$ and $\partial_{2}\omega_{r_{2}}$ are respectively the graph parts of $\rho$ and $\rho+r_{2}^{4}$, and $\partial_{3}\omega_{r_{2}}$ is the boundary part on $\{|x'|=r_{2}\}$. 
On $\partial_{1}\omega_{r_{2}}$, using \eqref{eqn4.10} one gets 
\[
-KMv=KK_{0}\left(\big|x'\big|^{2}+M\big|x'\big|^{4}\right)\geq w-h.
\]
It follows from the results in Section 3 that 
\[
w-h\leq C_{3}M+C_{4}\quad \text{on}\ \partial_{2}\omega_{r_{2}}\cup\partial_{3}\omega_{r_{2}}, 
\]
where $C_{3}$ depends on $\Omega$; $C_{4}$ depends on $\Omega$, $\|\varphi\|_{C^{3,1}(\partial\Omega)}$,  $\operatorname{dist}(\Omega,\partial\Omega_{0})$, and either $\|f^{1/(k-1)}\|_{C^{1,1}(\overline{\Omega_{0}})}$ or  $\|f^{3/(2k-2)}\|_{C^{2,1}(\overline{\Omega_{0}})}$. 
It holds that
\[
-KMv\geq KM\left(\beta_{1}r_{2}^{4}-r_{2}^{8}\right)\geq\frac{\beta_{1}}{2}KMr_{2}^{4}\quad \text{on}\ \partial_{2}\omega_{r_{2}}, 
\]
and
\[
-KMv\geq KK_{0}\left(r_{2}^{2}+Mr_{2}^{4}\right)\quad \text{on}\ \partial_{3}\omega_{r_{2}}.
\] 
Then we obtain $-KMv\geq w-h$ on $\partial_{2}\omega_{r_{2}}\cup\partial_{3}\omega_{r_{2}}$ by taking $K$ large, which depends on $\Omega$, $\|\varphi\|_{C^{3,1}(\partial\Omega)}$,  $\operatorname{dist}(\Omega,\partial\Omega_{0})$, and either $\|f^{1/(k-1)}\|_{C^{1,1}(\overline{\Omega_{0}})}$ or  $\|f^{3/(2k-2)}\|_{C^{2,1}(\overline{\Omega_{0}})}$. 

Therefore, it follows from the maximum principle that 
\[
-KMv\geq w-h\quad \text{in}\ \omega_{r_{2}}, 
\]
which implies
\[
w-h\leq KM\left(\beta_{1}\big(x_{n}-\rho\big(x'\big)\big)+\frac{K_{0}}{M}\big|x'\big|^{2}+K_{0}\big|x'\big|^{4}\right)
\]
in $\omega_{r_{2}}$. 
This completes the proof of Lemma \ref{thm4.3}. 
\end{proof}

\begin{lemma}\label{thm4.4}
For any $\sigma>0$, there exists a positive constant $K_{\sigma}$ such that 
\[
\left(u_{(\tau)(\tau)}\right)_{n}(0)\leq\sigma M+K_{\sigma}, 
\]
where constant $K_{\sigma}$ depends on $\sigma$, $\Omega$, $\|\varphi\|_{C^{3,1}(\partial\Omega)}$,  $\operatorname{dist}(\Omega,\partial\Omega_{0})$, and either $\|f^{1/(k-1)}\|_{C^{1,1}(\overline{\Omega_{0}})}$ or  $\|f^{3/(2k-2)}\|_{C^{2,1}(\overline{\Omega_{0}})}$, but is independent of $\inf_{\Omega_{0}}f$.  
\end{lemma}
\begin{proof}
Let $v$ be the function defined in the proof of Lemma \ref{thm7.2}, which is  
\[
v=\big(x_{n}-\rho\big(x'\big)\big)^{2}-\beta_{1}\big(x_{n}-\rho\big(x'\big)\big)-\beta_{2}\big|x'\big|^{2}\quad \text{in}\ \omega_{r_{1}}.
\] 
Recalling \eqref{eqn7.22}, we derive 
\begin{equation}\label{eqn4.18}
\mathcal{L}(v)\geq\frac{\delta_{1}}{C}\sigma_{1}^{\frac{1}{k-1}}\big[D^{2}u\big]f^{\frac{-1}{k(k-1)}}\overset{\eqref{eqn1.6}}{\geq}\frac{\delta_{1}}{C}\delta_{0}^{\frac{1}{k-1}}f^{\frac{-1}{k(k-1)}}\quad \text{in}\ \omega_{r_{1}},
\end{equation}
where $C>0$ is universal.  
We emphasize that $\beta_{1}$, $\beta_{2}$, $r_{1}\leq\min\{r_{0},\beta_{1}\}$, $\delta_{1}$ all depend only on $\Omega$.
It follows from Lemma \ref{thm4.2} and \eqref{eqn4.18} that there exists a large constant $K>0$ such that 
\begin{equation}\label{eqn4.19}
\mathcal{L}(-Kv)<\mathcal{L}(w-h)\quad \text{in}\ \omega_{r_{1}}, 
\end{equation}
where constant $K$ depends on $\Omega$, $\operatorname{dist}(\Omega,\partial\Omega_{0})$, and either $\|f^{1/(k-1)}\|_{C^{1,1}(\overline{\Omega_{0}})}$ or $\|f^{3/(2k-2)}\|_{C^{2,1}(\overline{\Omega_{0}})}$. 

For any $r\leq \min\{r_{1},r_{2}\}$, the boundary of $\omega_{r}$ consists of three parts: $\partial_{1}\omega_{r}\cup\partial_{2}\omega_{r}\cup\partial_{3}\omega_{r}$, where $\partial_{1}\omega_{r}$ and $\partial_{2}\omega_{r}$ are respectively the graph parts of $\rho$ and $\rho+r^{4}$, and $\partial_{3}\omega_{r}$ is the boundary part on $\{|x'|=r\}$. 
Using \eqref{eqn4.10}, one gets 
\begin{equation}\label{eqn4.22}
-K(rM+1)v=K\beta_{2}(rM+1)\big|x'\big|^{2}\geq K\beta_{2}\left(\big|x'\big|^{2}+M\big|x'\big|^{4}\right)\geq w-h
\end{equation}
on $\partial_{1}\omega_{r}$, by taking $K$ large, which depends on $\Omega$, $\|\varphi\|_{C^{3,1}(\partial\Omega)}$, $\operatorname{dist}(\Omega,\partial\Omega_{0})$, and either $\|f^{1/(k-1)}\|_{C^{1,1}(\overline{\Omega_{0}})}$ or  $\|f^{3/(2k-2)}\|_{C^{1,1}(\overline{\Omega_{0}})}$. 
It follows from Lemma \ref{thm4.3} that 
\[
w-h\leq 2K_{2}Mr^{4}+K_{2}r^{2}\quad \text{on}\ \partial_{3}\omega_{r}. 
\]
We have 
\begin{equation}\label{eqn4.23}
-K(rM+1)v\geq K(rM+1)\beta_{2}r^{2}\geq K\beta_{2}Mr^{4}+K\beta_{2}r^{2}\geq w-h
\end{equation}
on $\partial_{3}\omega_{r}$, by taking $K$ large, which depends on $\delta_{0}$, $\Omega$, $\|\varphi\|_{C^{3,1}(\partial\Omega)}$, $\operatorname{dist}(\Omega,\partial\Omega_{0})$, and either $\|f^{1/(k-1)}\|_{C^{1,1}(\overline{\Omega_{0}})}$ or  $\|f^{3/(2k-2)}\|_{C^{2,1}(\overline{\Omega_{0}})}$. 

Without loss of generality, we assume $|\nabla_{x'}\rho|\leq1$ on $\{|x'|\leq r_{0}\}$. 
The distance function satisfies  
\[
\operatorname{dist}(x,\partial\Omega)\geq\inf_{|y'|\leq r_{0}}\left(\big|x'-y'\big|+r^{4}-\big|\nabla_{x'}\rho\cdot\big(x'-y'\big)\big|\right)
\geq r^{4},\quad \forall x\in\partial_{2}\omega_{r}. 
\] 
By virtue of the weakly interior estimate, namely Theorem \ref{thm1.1}, we infer  
\begin{align*}
w-h\leq\left|u_{ij}\tau_{i}\tau_{j}\right|+C_{1}&\leq2\sup_{\Omega_{r^{4}}}\left|D^{2}u\right|+C_{1}\\
&\leq \frac{2}{r^{4}}\left(r^{9}\sup_{\partial\Omega}\left|D^{2}u\right|+C_{r^{9}}\right)+C_{1}\\
&\leq2Mr^{5}+\frac{2C_{r^{9}}}{r^{4}}+C_{1}
\end{align*}
on $\partial_{2}\omega_{r}$, where $C_{r^{9}}$ is given in \eqref{eqn1.7}; and  $C_{1}>0$ depends on $\Omega$, $\|\varphi\|_{C^{3,1}(\partial\Omega)}$, $\operatorname{dist}(\Omega,\partial\Omega_{0})$, and either $\|f^{1/(k-1)}\|_{C^{1,1}(\overline{\Omega_{0}})}$ or  $\|f^{3/(2k-2)}\|_{C^{1,1}(\overline{\Omega_{0}})}$. 
Then we obtain  
\begin{align}\label{eqn4.24}
-K\left(rM+\frac{C_{r^{9}}}{r^{8}}\right)v\geq K\left(rM+\frac{C_{r^{9}}}{r^{8}}\right)\frac{\beta_{1}}{2}r^{4}&= \frac{K\beta_{1}}{2}Mr^{5}+\frac{K\beta_{1}}{2}\frac{C_{r^{9}}}{r^{4}}\nonumber\\
&\geq w-h 
\end{align}
on $\partial_{2}\omega_{r}$, by taking  $K$ large, which depends on $\Omega$, $\|\varphi\|_{C^{3,1}(\partial\Omega)}$, $\operatorname{dist}(\Omega,\partial\Omega_{0})$, and either $\|f^{1/(k-1)}\|_{C^{1,1}(\overline{\Omega_{0}})}$ or  $\|f^{3/(2k-2)}\|_{C^{1,1}(\overline{\Omega_{0}})}$. 

Note that $\mathcal{L}(v)>0$ and $v<0$ in $\omega_{r}$.
Applying the maximum principle with \eqref{eqn4.19}--\eqref{eqn4.24} yields  
\[
-K\left(rM+\frac{C_{r^{9}}}{r^{8}}\right)v\geq w-h\quad \text{in}\ \omega_{r},\quad \forall r\leq\min\{r_{1},r_{2}\}, 
\]
where the constant $K$ depends on $\delta_{0}$, $\Omega$, $\|\varphi\|_{C^{3,1}(\partial\Omega)}$,  $\operatorname{dist}(\Omega,\partial\Omega_{0})$, and 
either $\|f^{1/(k-1)}\|_{C^{1,1}(\overline{\Omega_{0}})}$ or $\|f^{3/(2k-2)}\|_{C^{2,1}(\overline{\Omega_{0}})}$. 
Therefore,  
\[
\left(u_{(\tau)(\tau)}\right)_{n}(0)=\lim_{x_{n}\rightarrow0^{+}}\frac{w(0,x_{n})-h(0)}{x_{n}}\leq K\beta_{1}\left(rM+\frac{C_{r^{9}}}{r^{8}}\right)
\]
for all $r\leq\min\{r_{1},r_{2}\}$. 
Let
\[
\sigma:=K\beta_{1} r\quad \text{and}\quad  K_{\sigma}:=K\beta_{1}\frac{C_{r^{9}}}{r^{8}}.
\]
This completes the proof of Lemma \ref{thm4.4}. 
\end{proof}

\begin{lemma}\label{thm4.5} 
It holds that 
\[
\left(u_{\nu ij}\eta_{i}\eta_{j}+u_{\nu i}\eta_{j}\partial_{x_{j}}(\eta_{i})\right)(x)
\leq\sigma M+K_{\sigma}, \quad\forall\sigma>0, 
\]
for all points $x\neq0$ on $\partial\Omega\cap\{|x'|\leq r_{0}\}$, where $K_{\sigma}$ is given in Lemma \ref{thm4.4}.  
\end{lemma}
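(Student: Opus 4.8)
The plan is to obtain Lemma~\ref{thm4.5} by transporting Lemma~\ref{thm4.4} to a general boundary point. The key observation is that, at a point $x_{0}\neq0$ on $\partial\Omega\cap\{|x'|\leq r_{0}\}$, the left-hand side of Lemma~\ref{thm4.5} agrees---up to terms controlled by Theorem~\ref{thm1.5} and $\|u\|_{C^{1}(\overline{\Omega})}$---with the quantity $(u_{(\tau)(\tau)})_{\nu}$ at $x_{0}$, where $\tau$ is the affine vector field of the type \eqref{eqn4.3} built at $x_{0}$ along the tangent direction $\eta(x_{0})$ and $\nu=\nu(x_{0})$. Lemma~\ref{thm4.4} bounds exactly such a quantity, uniformly over boundary points and tangent directions.

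First I would fix $x_{0}\neq0$ on $\partial\Omega\cap\{|x'|\leq r_{0}\}$ and choose a rigid motion $R$ of $\mathbb{R}^{n}$ sending $x_{0}$ to the origin, $\nu(x_{0})$ to $e_{n}$, and $\eta(x_{0})/|\eta(x_{0})|$ to $e_{1}$; this is possible since $\eta(x_{0})$ is tangent to $\partial\Omega$ at $x_{0}$ (from \eqref{eqn4.30} and \eqref{eqn4.31} one checks $\langle\eta,\nu\rangle\equiv0$ and $|\nu|\equiv1$ in $B_{r_{0}}\setminus\{0\}$), and $|\eta(x_{0})|=1+O(|x_{0}'|^{2})$. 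Every datum entering Lemma~\ref{thm4.4}---strict $(k-1)$-convexity of $\partial\Omega$, $\|\varphi\|_{C^{3,1}(\partial\Omega)}$, $\|f^{1/(k-1)}\|_{C^{1,1}(\overline{\Omega_{0}})}$ or $\|f^{3/(2k-2)}\|_{C^{2,1}(\overline{\Omega_{0}})}$, $\operatorname{dist}(\Omega,\partial\Omega_{0})$, the condition \eqref{eqn1.6}, and the value $M=\sup_{\partial\Omega}u_{\nu\nu}$---is invariant under $R$, so Lemma~\ref{thm4.4} applies at $x_{0}$ with the same $\sigma$ and $K_{\sigma}$. Pulling back via $R$ the field \eqref{eqn4.3} of the rotated chart yields an affine field $\tau$ in $\overline{\Omega}$ with $\tau(x_{0})=\eta(x_{0})/|\eta(x_{0})|$ and $\partial_{x_{j}}\tau_{i}\equiv B_{ij}$ for a constant skew-symmetric matrix $B$ with $|B|\leq C(\Omega)$; since $u_{(\tau)(\tau)}$ and the directional derivative are coordinate-free, Lemma~\ref{thm4.4} gives
\[
\bigl(u_{(\tau)(\tau)}\bigr)_{\nu}(x_{0})\leq\sigma M+K_{\sigma}.
\]

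Next I would expand $(u_{(\tau)(\tau)})_{\nu}(x_{0})$ and $(u_{\nu ij}\eta_{i}\eta_{j}+u_{\nu i}\eta_{j}\partial_{x_{j}}(\eta_{i}))(x_{0})$ and compare them term by term. The leading third-order term is $u_{ijk}\nu_{k}\eta_{i}\eta_{j}$ in both (using $\tau(x_{0})\parallel\eta(x_{0})$ and absorbing the factor $1+O(|x_{0}'|^{2})$). Each remaining term is one of: $D^{2}u(x_{0})$ paired with two vectors tangent to $\partial\Omega$ at $x_{0}$, or with $\nu$ and one tangent vector, hence bounded by Theorem~\ref{thm1.5}; $Du(x_{0})$ paired with first- and second-order data of the fields $\nu$ (smooth in $B_{r_{0}}$), $\eta$ (only the radial derivative $\eta_{j}\partial_{x_{j}}\eta_{i}$ occurs, which is bounded uniformly in $x_{0}\neq0$), and $\tau$ (with $|B|\leq C$), hence bounded by $\|u\|_{C^{1}(\overline{\Omega})}$; or a single normal--normal term $c\,u_{\nu\nu}(x_{0})$, a priori of size $O(M)$.

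The decisive point is that the two normal--normal coefficients cancel. On the $\tau$ side the only $u_{\nu\nu}(x_{0})$ contribution comes from the $\nu$-component of $B\eta(x_{0})$, which equals the normal curvature of $\partial\Omega$ at $x_{0}$ in the direction $\eta(x_{0})$ (in the rotated chart it is the entry $\langle\widetilde{A}e_{1},e_{n}\rangle$ of the matrix $\widetilde{A}$ appearing in the analogue of \eqref{eqn4.3}, namely $\rho_{11}(0)$ for the rotated boundary graph). On the $\eta$ side the only $u_{\nu\nu}(x_{0})$ contribution comes from the $\nu$-component of $\eta_{j}\partial_{x_{j}}\eta_{i}$; differentiating $\langle\eta,\nu\rangle\equiv0$ in the direction $\eta$ and using $|\nu|\equiv1$ gives $\langle\partial_{\eta}\eta,\nu\rangle=-\langle\eta,\partial_{\eta}\nu\rangle=-\langle\nabla_{\eta}\nu,\eta\rangle$, which is exactly the same normal curvature. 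Hence
\[
\bigl(u_{\nu ij}\eta_{i}\eta_{j}+u_{\nu i}\eta_{j}\partial_{x_{j}}(\eta_{i})\bigr)(x_{0})=\bigl(u_{(\tau)(\tau)}\bigr)_{\nu}(x_{0})+O(1),
\]
with the $O(1)$ depending only on $\Omega$, $\|\varphi\|_{C^{3,1}(\partial\Omega)}$ and the quantities bounded in Theorem~\ref{thm1.5}, uniformly in $x_{0}$; enlarging $K_{\sigma}$ to absorb it finishes the proof. I expect the comparison in the last two paragraphs to be the main obstacle: since the $u_{\nu\nu}$-terms are of the same order as $M$, their cancellation must be exact, which forces one to match the second-fundamental-form coefficient generated by the frozen affine field (the entry $\rho_{11}(0)$ of $B$ in the rotated chart) against the one generated by the variable field $\eta$ (namely $\langle\nabla_{\eta}\nu,\eta\rangle$); the remaining points---the normalization of $\eta(x_{0})$, the uniformity of constants in $x_{0}$, and the routine but lengthy expansion of the two expressions---are bookkeeping.
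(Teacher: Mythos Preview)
Your proposal is correct and follows essentially the same route as the paper's proof. Both transport Lemma~\ref{thm4.4} to $x_{0}$ by a rigid motion and compare $(u_{(\tau)(\tau)})_{\nu}(x_{0})$ with the expression in the statement; both identify that the only dangerous term is a multiple of $u_{\nu\nu}(x_{0})$ and that its coefficient is the normal curvature of $\partial\Omega$ at $x_{0}$ in the direction $\eta(x_{0})$, on \emph{both} sides, so it cancels. Your observation that ``only the radial derivative $\eta_{j}\partial_{x_{j}}\eta_{i}$ occurs, and it is bounded uniformly'' is exactly what the paper's long explicit computation (its Step~3) verifies: in the rotated chart the relevant derivative $\partial_{y_{1}}\bar{\eta}$ equals $|\eta(x_{0})|^{-1}T(\partial_{\eta}\eta)(x_{0})$, and since $\partial_{\eta}\eta(x_{0})$ lies in $\mathbb{R}e_{n}$ (in the original chart), its tangential components at $x_{0}$ are $O(|x_{0}'|)$, hence bounded. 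So your conceptual argument replaces the paper's coordinate computation.

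One point you should make precise is the normalization $|\eta(x_{0})|^{2}=1+O(|x_{0}'|^{2})$. As written, you claim the two expressions differ by $O(1)$; but the third-order leading terms differ by the factor $|\eta(x_{0})|^{2}$, and since that term is not a priori bounded, the discrepancy $(|\eta(x_{0})|^{2}-1)\,u_{\nu ij}\hat{\eta}_{i}\hat{\eta}_{j}$ is not obviously $O(1)$. The paper fixes this by first normalizing (working with $\tilde{\eta}=\bar{\eta}/|\eta(x_{0})|$) to obtain $u_{n(\tilde{\eta})(\tilde{\eta})}(0)\leq\sigma M+K_{\sigma}$, and then multiplying by $|\eta(x_{0})|^{2}\leq4$; equivalently, you should show the normalized left-hand side equals $(u_{(\tau)(\tau)})_{\nu}(x_{0})+O(1)$ and absorb the factor $|\eta(x_{0})|^{2}$ into $\sigma$ and $K_{\sigma}$ afterwards. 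With that adjustment your proof goes through.
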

\begin{proof}
It is clear that the expression $(u_{\nu ij}\eta_{i}\eta_{j}+u_{\nu i}\eta_{j}\partial_{x_{j}}(\eta_{i}))(x)$ is invariant under orthogonal transformations.  

\par
\vspace{2mm}
\textbf{Step 1.} 
Let $\tilde{\eta}(x)$ be an arbitrary tangential vector field defined in $B_{r}(0)$, where the radius $r>0$ can be arbitrarily small depending on the origin $0\in\partial\Omega$. 
Furthermore, let $\tilde{\eta}$ satisfy  
\[
\tilde{\eta}(0)=e_{1}\quad \text{and}\quad \left|\partial_{x_{1}}(\tilde{\eta}_{i})(0)\right|\leq C,\quad \forall1\leq i\leq n-1, 
\]
for some constant $C$. 

Differentiating $\nu\cdot\tilde{\eta}(x',\rho(x'))=0$ with respect to $x_{1}$ at the origin yields  
\[
\partial_{x_{1}}(\tilde{\eta}_{n})(0)-\rho_{11}(0)=0.
\]
Then we have 
\[
u_{n(\tilde{\eta})(\tilde{\eta})}(0)
=u_{n11}(0)+u_{ni}(0)\partial_{x_{1}}(\tilde{\eta}_{i})(0)\leq u_{n11}(0)+u_{nn}(0)\rho_{11}(0)+K_{1}, 
\]
where $K_{1}$ depends on $C$ and tangential-normal derivatives of $u$ on $\partial\Omega$. 
It holds that 
\begin{align*}
\left(u_{(\tau)(\tau)}\right)_{n}(0)&=u_{n11}(0)+u_{in}(0)\tau_{j}\partial_{x_{j}}(\tau_{i})(0)\\
&\hspace{4.5mm}+u_{ij}\partial_{x_{n}}[\tau_{i}\tau_{j}]+u_{i}\partial_{x_{n}}\big[\tau_{j}\partial_{x_{j}}(\tau_{i})\big]\\
&\geq u_{n11}(0)+u_{nn}(0)\rho_{11}(0)-K_{2},
\end{align*}
where $K_{2}$ depends on $\Omega$, $\|u\|_{C^{1}(\overline{\Omega})}$, second-order tangential and tangential-normal derivatives of $u$ on $\partial\Omega$.  
As a conclusion,
\begin{equation}\label{eqn7.12}
u_{n(\tilde{\eta})(\tilde{\eta})}(0)\leq \left(u_{(\tau)(\tau)}\right)_{n}(0)+K_{1}+K_{2}\leq\sigma M+K_{\sigma},\quad \forall\sigma>0, 
\end{equation}
by Lemma \ref{thm4.4} and increasing $K_{\sigma}$. 

\par
\vspace{2mm}
\textbf{Step 2.}
For any $x_{0}\neq0$ on $\partial\Omega\cap\{|x'|\leq r_{0}\}$, 
by rotation and translation, we introduce a new coordinate system $y$ such that  $x_{0}$ is the origin $0\in\partial\Omega$, $e_{n}=\nu(0)$, and $\eta(x_{0})$ is in the direction $e_{1}$. 
Denote the expression of vector field $\eta(x)$ in the new coordinate system by $\bar{\eta}(y)$.
It is clear that $\bar{\eta}$ is well-defined in $B_{|x_{0}|/2}(0)$. 
Recalling \eqref{eqn4.30}, by calculation, one can verify 
\[
\left|\partial_{y_{1}}(\bar{\eta}_{i})(0)\right|\leq C,\quad \forall1\leq i\leq n-1, 
\]
where $C$ depends only on the $C^{2}$-norm of $\partial\Omega$.  

Define $\tilde{\eta}(y):=\bar{\eta}(y)/|\eta(x_{0})|$.
Then   
\[
\tilde{\eta}(0)=e_{1}\quad \text{and}\quad \left|\partial_{y_{1}}(\tilde{\eta}_{i})(0)\right|\leq C,\quad \forall1\leq i\leq n-1. 
\]
By \eqref{eqn7.12} and $|\eta(x_{0})|\leq2$, we get 
\[
\left(u_{nij}\bar{\eta}_{i}\bar{\eta}_{j}+u_{ni}\bar{\eta}_{j}\partial_{y_{j}}(\bar{\eta}_{i})\right)(0)\leq 4u_{n(\tilde{\eta})(\tilde{\eta})}(0)\leq\sigma M+4K_{\sigma/4}
\]
for all $\sigma>0$. 
Coming back to the original coordinate system \eqref{eqn4.2}, by increasing $K_{\sigma}$, we derive 
\[
\left(u_{\nu ij}\eta_{i}\eta_{j}+u_{\nu i}\eta_{j}\partial_{x_{j}}(\eta_{i})\right)(x_{0})
\leq\sigma M+K_{\sigma},\quad \forall\sigma>0, 
\]
for all $x_{0}\neq0$ on $\partial\Omega\cap\{|x'|\leq r_{0}\}$. 
The proof of Lemma \ref{thm4.5} is complete.  
\end{proof}
\par
\vspace{2mm}

We now proceed to establish the boundary second-order normal derivative estimate.
\par
\vspace{2mm}
\begin{proof}[Proof of Theorem \ref{thm10.3}]
Without loss of generality, let $M$ be attained at the origin $0\in\partial\Omega$, $e_{n}=\nu(0)$, \eqref{eqn4.2} hold, $\varphi(0)=0$, and $\nabla\varphi(0)=0$.  

Applying Taylor expansion to $h(t):=u_{n}(tx',\rho(tx'))$, we obtain that for any $x\neq0$ on $\partial\Omega\cap\{|x'|\leq r_{0}\}$,   
\begin{equation}\label{eqn4.20}
u_{n}\big(x',\rho\big(x'\big)\big)-u_{n}(0)-\sum_{i=1}^{n-1}u_{ni}(0)x_{i}
=\frac{1}{2}u_{n(\eta)(\eta)}\big(tx',\rho\big(tx'\big)\big)\left|x'\right|^{2},
\end{equation}
where $t\in(0,1)$ depends on $x$, and $\eta$ is defined in \eqref{eqn4.30}.  
Differentiating $u=\varphi$ on $\partial\Omega$ three times, yields that the third-order tangential derivatives of $u$ are bounded. 
Using Lemma \ref{thm4.5}, we derive  
\begin{align}\label{eqn4.21}
u_{n(\eta)(\eta)}(y)&=\left[u_{n\eta\eta}+u_{ni}\eta_{j}\partial_{x_{j}}(\eta_{i})\right](y)\nonumber\\
&=\langle e_{n},\nu\rangle\left[ u_{\nu ij}\eta_{i}\eta_{j}+ u_{\nu i}\eta_{j}\partial_{x_{j}}(\eta_{i})\right](y)+C_{1}\nonumber\\
&\leq\sigma M+K_{\sigma}+C_{1} 
\end{align}
for all $y\neq0$ on $\partial\Omega\cap\{|x'|\leq r_{0}\}$ and for all $\sigma>0$, where $C_{1}$ depends on $\Omega$, $\|\varphi\|_{C^{3,1}(\partial\Omega)}$, $\operatorname{dist}(\Omega,\partial\Omega_{0})$, and either $\|f^{1/(k-1)}\|_{C^{1,1}(\overline{\Omega_{0}})}$ or  $\|f^{3/(2k-2)}\|_{C^{1,1}(\overline{\Omega_{0}})}$. 
It follows from \eqref{eqn4.20} and \eqref{eqn4.21} that 
\begin{equation}\label{eqn4.29}
u_{n}(x)-u_{n}(0)-\sum_{i=1}^{n-1}u_{ni}(0)x_{i}
\leq (\sigma M+K_{\sigma})\big|x'\big|^{2}
\end{equation}
for all $x\in\partial\Omega\cap\{|x'|\leq r_{0}\}$, by increasing $K_{\sigma}$. 

Let $v$ be the function defined in the proof of Lemma \ref{thm7.2}, which is 
\[
v=\big(x_{n}-\rho\big(x'\big)\big)^{2}-\beta_{1}\big(x_{n}-\rho\big(x'\big)\big)-\beta_{2}\big|x'\big|^{2}\quad \text{in}\ \omega_{r_{1}}.
\] 
Recalling \eqref{eqn7.22}, we derive 
\begin{equation}\label{eqn4.28}
\mathcal{L}(v)\geq\frac{\delta_{1}}{C}\sigma_{1}^{\frac{1}{k-1}}\big[D^{2}u\big]f^{\frac{-1}{k(k-1)}}\overset{\eqref{eqn1.6}}{\geq}\frac{\delta_{1}}{C}f^{\frac{-1}{k(k-1)}}\quad \text{in}\ \omega_{r_{1}},
\end{equation}
where $C>0$ is universal.  
We emphasize that $\beta_{1}$, $\beta_{2}$, $r_{1}\leq\min\{r_{0},\beta_{1}\}$, $\delta_{1}$ all depend only on $\Omega$.
By \eqref{eqn2.7} and \eqref{eqn7.1}, we infer  
\begin{equation}\label{eqn8.1}
\mathcal{L}(u_{n})=\left(\frac{p}{k}g^{\frac{p}{k-1}-\frac{1}{2}}\frac{\partial_{n}g}{\sqrt{g}}\right)f^{\frac{-1}{k(k-1)}}\geq-C_{2}f^{\frac{-1}{k(k-1)}}\quad \text{in}\ \Omega,
\end{equation}
where the constant $C_{2}$ depends on $\operatorname{dist}(\Omega,\partial\Omega_{0})$ and either $\|f^{1/(k-1)}\|_{C^{1,1}(\overline{\Omega_{0}})}$ or  $\|f^{3/(2k-2)}\|_{C^{1,1}(\overline{\Omega_{0}})}$. 
Define
\[
\tilde{v}:=-K(\sigma M+K_{\sigma})v\quad \text{in}\ \overline{\omega_{r_{1}}}, 
\]
where $K>0$ is a constant to be determined. 
By \eqref{eqn4.28} and \eqref{eqn8.1}, we obtain  
\[
\mathcal{L}(\tilde{v})\leq \mathcal{L}\left(u_{n}-u_{n}(0)-\sum_{i=1}^{n-1}u_{ni}(0)x_{i}\right)\quad \text{in}\ \omega_{r_{1}},  
\]
by taking the constant $K$ large, which depends on $\Omega$,  $\operatorname{dist}(\Omega,\partial\Omega_{0})$, and either $\|f^{1/(k-1)}\|_{C^{1,1}(\overline{\Omega_{0}})}$ or  $\|f^{3/(2k-2)}\|_{C^{1,1}(\overline{\Omega_{0}})}$. 
 
The boundary $\partial\omega_{r_{1}}$ consists of three parts: $\partial_{1}\omega_{r_{1}}\cup\partial_{2}\omega_{r_{1}}\cup\partial_{3}\omega_{r_{1}}$, defined in the proof of Lemma \ref{thm7.2}.
Using \eqref{eqn4.29}, one has 
\[
\tilde{v}= K\beta_{2}(\sigma M+K_{\sigma})\big|x'\big|^{2}
\geq u_{n}-u_{n}(0)-\sum_{i=1}^{n-1}u_{ni}(0)x_{i}\quad \text{on}\ \partial_{1}\omega_{r_{1}},  
\]
by taking $K$ large, which depends on $\Omega$. 
It is clear that 
\[
\tilde{v}\geq K(\sigma M+K_{\sigma})\min\left\{\frac{\beta_{1}}{2}r_{1}^{4},\beta_{2}r_{1}^{2}\right\}
\]
on $\partial_{2}\omega_{r_{1}}\cup\partial_{3}\omega_{r_{1}}$. 
According to the results in Section 3, we get
\[
\tilde{v}\geq u_{n}-u_{n}(0)-\sum_{i=1}^{n-1}u_{ni}(0)x_{i}\quad \text{on}\ \partial_{2}\omega_{r_{1}}\cup\partial_{3}\omega_{r_{1}}, 
\] 
by taking $K$ large, which depends on $\Omega$, $\|\varphi\|_{C^{3,1}(\partial\Omega)}$, $\operatorname{dist}(\Omega,\partial\Omega_{0})$, and either $\|f^{1/(k-1)}\|_{C^{1,1}(\overline{\Omega_{0}})}$ or  $\|f^{3/(2k-2)}\|_{C^{1,1}(\overline{\Omega_{0}})}$. 
Hence applying the comparison principle yields 
\[
\tilde{v}\geq u_{n}-u_{n}(0)-\sum_{i=1}^{n-1}u_{ni}(0)x_{i}\quad \text{in}\ \omega_{r_{1}},
\]
where the constant $K$ depends on $\Omega$, $\|\varphi\|_{C^{3,1}(\partial\Omega)}$,  $\operatorname{dist}(\Omega,\partial\Omega_{0})$, and either $\|f^{1/(k-1)}\|_{C^{1,1}(\overline{\Omega_{0}})}$ or  $\|f^{3/(2k-2)}\|_{C^{1,1}(\overline{\Omega_{0}})}$.

Therefore, 
\[
M=u_{nn}(0)=\lim_{x_{n}\rightarrow0^{+}}\frac{u_{n}(0,x_{n})-u_{n}(0)}{x_{n}}\leq\partial_{x_{n}}\tilde{v}(0)=K\beta_{1}(\sigma M+K_{\sigma}), 
\] 
where $K_{\sigma}$ is given in Lemma \ref{thm4.4}. 
Taking $\sigma=1/(2K\beta_{1})$, we obtain the upper bound of $M$. 
Then the $C^{2}$ estimate independent of $\inf_{\Omega_{0}}f$ is established, and  Theorem \ref{thm10.3} follows from an approximation argument.
\end{proof}

\subsection{The case under condition $(\romannumeral3)$}

In this subsection, we assume $f^{3/(2k)}\in C^{2,1}(\overline{\Omega_{0}})$. 
In fact, Lemmas \ref{thm2.5} and \ref{thm4.2} can be improved.

\begin{lemma}\label{thm7.4}
Let $\inf_{\Omega_{0}}f>0$ where $\Omega\Subset\Omega_{0}$, and  $f^{3/(2k)}\in C^{2,1}(\overline{\Omega_{0}})$.  
Then   
\[
\operatorname{tr}\left(GD^{2}u_{\xi\xi}\right)\geq-K|\xi|^{2},\quad \forall \xi\in\mathbb{R}^{n}, 
\]
in $\Omega$, where $K$ depends on  $\operatorname{dist}(\Omega,\partial\Omega_{0})$ and  $\|f^{3/(2k)}\|_{C^{2,1}(\overline{\Omega_{0}})}$, but is independent of $\inf_{\Omega_{0}}f$. 
\end{lemma}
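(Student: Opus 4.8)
The plan is to run the argument of Lemma \ref{thm2.5} again, but now with $g := f^{3/(2k)}$, i.e.\ with exponent $p = 2k/3$ in \eqref{eqn2.5}, and to observe that this particular value of $p$ makes the degenerate weight cancel \emph{exactly}. First I would differentiate the equation \eqref{eqn2.5} twice in an arbitrary direction $e \in \mathbb{S}^{n-1}$ and use the concavity of $F$ to reach \eqref{eqn2.16}. Since $p = 2k/3$ gives $\frac{k-p}{k} = \frac13$, this reads
\[
F^{ij}\big[D^{2}u\big] u_{eeij} \;\geq\; \frac{p}{k}\, g^{\frac{p}{k}-1}\Big(\partial_{ee}g - \frac13 \frac{|\partial_e g|^2}{g}\Big), \qquad \forall e \in \mathbb{S}^{n-1}.
\]

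Next, because the weight $\alpha = \frac13$ satisfies $\alpha < \frac12$, I would invoke \eqref{eqn7.2} from Lemma \ref{thm2.2} (applied to this $g$) to get $\partial_{ee}g - \frac13 |\partial_e g|^2/g \geq -K g^{1/3}$ on $\overline{\Omega}$, where $K$ depends only on $\operatorname{dist}(\Omega,\partial\Omega_{0})$ and $\|f^{3/(2k)}\|_{C^{2,1}(\overline{\Omega_{0}})}$ and is independent of $\inf_{\Omega_{0}} f$. Since $g^{p/k-1} > 0$, multiplying through and using the arithmetic identity $\frac{p}{k} - 1 + \frac13 = \frac23 - 1 + \frac13 = 0$ gives
\[
F^{ij}\big[D^{2}u\big] u_{eeij} \;\geq\; -\frac{p}{k}\,K\, g^{\frac{p}{k}-1+\frac13} \;=\; -\frac{p}{k}\,K,
\]
a pure constant. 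This cancellation is the whole point: the extra smoothness lets the factor $g^{p/k-1}$ be absorbed entirely against $g^{1/3}$, leaving no negative power of $g$ (hence no $\sigma_{1}^{-1/(k-1)}$ factor) and, since $\frac13 < \frac12$ holds for every $k$, no restriction $k \geq 5$.

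Finally I would divide by $\operatorname{tr} F^{ij}[D^{2}u]$, using only the trivial half of \eqref{eqn2.11}, namely $\operatorname{tr} F^{ij}[D^{2}u] \geq 1/C$ with $C$ universal, to obtain $\operatorname{tr}(G D^{2} u_{ee}) \geq -CK\,p/k =: -K$ for all $e \in \mathbb{S}^{n-1}$. For a general $\xi \neq 0$, writing $\xi = |\xi|\, e$ and using $D^{2} u_{\xi\xi} = |\xi|^{2} D^{2} u_{ee}$ together with the linearity of $A \mapsto \operatorname{tr}(GA)$ yields $\operatorname{tr}(G D^{2} u_{\xi\xi}) \geq -K|\xi|^{2}$; the case $\xi = 0$ is trivial. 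I do not anticipate a genuine obstacle: the only things that need to be checked carefully are the two elementary identities $\frac{k-p}{k} = \frac13$ and $\frac{p}{k} - 1 + \frac13 = 0$ for $p = 2k/3$, which are precisely what distinguish this case from Lemma \ref{thm2.5}.
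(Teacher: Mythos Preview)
Your proposal is correct and follows exactly the paper's own argument: apply \eqref{eqn2.16} with $p=2k/3$ so that $(k-p)/k=1/3$, invoke \eqref{eqn7.2} with $\alpha=1/3<1/2$, observe the exact cancellation $g^{p/k-1}\cdot g^{1/3}=g^{0}=1$, and finish with the trivial lower bound $\operatorname{tr}F^{ij}[D^{2}u]\geq 1/C$ from \eqref{eqn2.11}. The key identities you flag are precisely the ones the paper uses, and there is no gap.
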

\begin{proof} 
It follows from \eqref{eqn2.16}, \eqref{eqn7.2}, and \eqref{eqn2.11} that 
\[
\operatorname{tr}\left(GD^{2}u_{ee}\right)\geq\frac{1}{\operatorname{tr}F^{ij}}\frac{2}{3}g^{-\frac{1}{3}}\left(\partial_{ee}g-\frac{1}{3}\frac{\left|\partial_{e}g\right|^{2}}{g}\right)
\geq-\frac{K}{\operatorname{tr}F^{ij}}\geq-CK
\]
for all $e\in\mathbb{S}^{n-1}$, where $g=f^{3/(2k)}$, $C>0$ is universal, and  $K>0$ depends on $\operatorname{dist}(\Omega,\partial\Omega_{0})$ and $\|f^{3/(2k)}\|_{C^{2,1}(\overline{\Omega_{0}})}$. 
\end{proof}

\begin{lemma}\label{thm7.3}
Let $\inf_{\Omega_{0}}f>0$ where $\Omega\Subset\Omega_{0}$, and $f^{3/(2k)}\in C^{2,1}(\overline{\Omega_{0}})$. 
Then 
\[
\mathcal{L}(w)\geq-C\quad \text{in}\ \Omega, 
\]
where $C$ depends on $\Omega$, $\operatorname{dist}(\Omega,\partial\Omega_{0})$, and 
$\|f^{3/(2k)}\|_{C^{2,1}(\overline{\Omega_{0}})}$, but is independent of $\inf_{\Omega_{0}}f$. 
\end{lemma}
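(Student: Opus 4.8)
The plan is to run the argument of Lemma~\ref{thm4.2} but to exploit the larger exponent $3/(2k)$ so as to obtain a \emph{uniform} lower bound instead of one degenerating like $f^{-1/(k(k-1))}$. Put $g:=f^{3/(2k)}$, so that equation \eqref{eqn1.1} reads $F\big[D^{2}u\big]=f^{1/k}=g^{2/3}$ in $\Omega$. Since $w$ and $u_{(\tau)(\tau)}$ differ by a constant, $w_{ij}=\big(u_{(\tau)(\tau)}\big)_{ij}$, and the derivative exchange formula (Lemma~\ref{thm4.1}) gives
\[
\mathcal{L}(w)=\big(F\big[D^{2}u\big]\big)_{(\tau)(\tau)}-F^{ij,st}\big[D^{2}u\big](u_{\tau})_{ij}(u_{\tau})_{st}.
\]
By the concavity of $F=\sigma_{k}^{1/k}$ on $\Gamma_{k}$ the second term is nonnegative, hence $\mathcal{L}(w)\geq\big(g^{2/3}\big)_{(\tau)(\tau)}$, and it remains to bound this quantity from below.

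Next I would expand $\big(g^{2/3}\big)_{(\tau)(\tau)}=\big(g^{2/3}\big)_{ij}\tau_{i}\tau_{j}+\big(g^{2/3}\big)_{i}\,\tau_{j}\partial_{x_{j}}(\tau_{i})$ using
\[
\big(g^{2/3}\big)_{i}=\tfrac{2}{3}g^{-1/3}\partial_{i}g,\qquad \big(g^{2/3}\big)_{ij}=\tfrac{2}{3}g^{-1/3}\Big(\partial_{ij}g-\tfrac{1}{3}\tfrac{\partial_{i}g\,\partial_{j}g}{g}\Big),
\]
so that
\[
\big(g^{2/3}\big)_{(\tau)(\tau)}=\tfrac{2}{3}g^{-1/3}\Big(\partial_{\tau\tau}g-\tfrac{1}{3}\tfrac{|\partial_{\tau}g|^{2}}{g}\Big)+\tfrac{2}{3}g^{-1/3}(\partial_{i}g)\,\tau_{j}\partial_{x_{j}}(\tau_{i}).
\]
Since $\tau$ is affine with coefficients controlled by $\Omega$ (and $\partial\Omega$), both $|\tau|$ and $|D\tau|$ are bounded; applying Lemma~\ref{thm2.2}(ii) with $\alpha=1/3<1/2$ to the first bracket (giving $\partial_{\tau\tau}g-\tfrac13 g^{-1}|\partial_{\tau}g|^{2}\geq-K|\tau|^{2}g^{1/3}$) and Lemma~\ref{thm2.2}(i), which yields $g^{-1/3}|\partial_{i}g|\leq K^{1/2}g^{1/6}$, to the second, one obtains
\[
\big(g^{2/3}\big)_{(\tau)(\tau)}\geq -Cg^{-1/3}g^{1/3}-Cg^{1/6}\geq -C,
\]
the final step because $g=f^{3/(2k)}\in C^{2,1}(\overline{\Omega_{0}})$ is bounded on $\overline{\Omega}$; the constants are independent of $\inf_{\Omega_{0}}f$ because the constants in Lemma~\ref{thm2.2} are.

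The computation is routine; the only point worth stressing is that the exponent $3/(2k)$ is tuned precisely so that the factor $g^{-1/3}$ produced by differentiating $g^{2/3}$ twice cancels the $g^{1/3}$ appearing in Lemma~\ref{thm2.2}(ii), leaving a bound of order $1$ rather than one blowing up as $f\to0$, and that the weight $1/3$ multiplying $|\partial_{\tau}g|^{2}/g$ is automatically $<1/2$, so that no hypothesis such as $k\geq5$ is needed. This is why \eqref{eqn1.6} can be dropped later: in the barrier step $\mathcal{L}(w)\geq-C$ is dominated by $\mathcal{L}(v)\geq\delta\operatorname{tr}F^{ij}[D^{2}u]\geq\delta/C>0$ via \eqref{eqn2.11}, with no lower bound on $\Delta u$ required. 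I do not anticipate a genuine obstacle in this lemma.
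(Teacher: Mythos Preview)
Your proposal is correct and follows essentially the same route as the paper: invoke the derivative exchange formula (Lemma~\ref{thm4.1}) and concavity of $F$ to get $\mathcal{L}(w)\geq\big(g^{2/3}\big)_{(\tau)(\tau)}$, expand this with $g=f^{3/(2k)}$ to obtain $\tfrac{2}{3}g^{-1/3}\big(\partial_{\tau\tau}g-\tfrac13|\partial_{\tau}g|^{2}/g\big)+\tfrac{2}{3}g^{1/6}\big(\partial_{x_{i}}g/\sqrt{g}\big)\tau_{j}\partial_{x_{j}}(\tau_{i})$, and then apply Lemma~\ref{thm2.2}(ii) with $\alpha=1/3$ and Lemma~\ref{thm2.2}(i) respectively. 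Your additional commentary on why no $k\geq5$ restriction is needed and why \eqref{eqn1.6} can later be dropped is accurate and matches the paper's use of this lemma in proving Theorem~\ref{thm1.4}.
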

\begin{proof}
It follows from \eqref{eqn7.10} that
\[
\mathcal{L}(w)\geq\frac{2}{3}g^{-\frac{1}{3}}\left(\partial_{\tau\tau}g-\frac{1}{3}\frac{\left|\partial_{\tau}g\right|^{2}}{g}\right)+ \left(\frac{2}{3}g^{\frac{1}{6}}\frac{\partial_{x_{i}}g}{\sqrt{g}}\right)\tau_{j}\partial_{x_{j}}(\tau_{i})
\]
in $\Omega$, where $g=f^{3/(2k)}$. 
Using Lemma \ref{thm2.2}, we obtain
\[
\mathcal{L}(w)\geq -C,
\] 
where $C$ depends on $\|\rho\|_{C^{3}}$, $\operatorname{dist}(\Omega,\partial\Omega_{0})$, and  
$\|f^{3/(2k)}\|_{C^{2,1}(\overline{\Omega_{0}})}$.
\end{proof}

\par
\vspace{2mm}
Using Lemmas \ref{thm7.4} and \ref{thm7.3}, with the same proof as in subsection 6.1, we prove Theorem \ref{thm10.3}.

\end{document}